\title[Strictly nef divisors]
{Strictly nef divisors on K-trivial fourfolds}
\author{Haidong Liu} 
\author{Shin-ichi Matsumura}
\date{\today, version 0.05}
\subjclass[2010]{Primary 14J32; Secondary 14E30, 14J35, 14J42}
\keywords{strictly nef divisors, Calabi--Yau manifolds, hyperk\"ahler manifolds, K-trivial fourfolds, the abundance conjecture, 
the ampleness conjecture, Serrano's conjecture}
\address{Peking University, Beijing International Center for Mathematical Research, 
Beijing 100871, China}
\email{hdliu@bicmr.pku.edu.cn, jiuguiaqi@gmail.com}
\address{Mathematical Institute, Tohoku University, 6-3, Aramaki Aza-Aoba, Aoba-ku, Sendai 980-8578, Japan}
\email{mshinichi@m.tohoku.ac.jp, mshinichi0@gmail.com}
\urladdr{\url{https://sites.google.com/view/math-matsumura/}}
\DeclareMathOperator{\codim}{codim}
\DeclareMathOperator{\reg}{reg}
\DeclareMathOperator{\strutt}{\mathcal{O}}
\newcommand{\e}{\varepsilon}
\newcommand{\bX}{\mathcal{X}}
\newcommand{\bL}{\mathcal{L}}
\newcommand{\Def}[1]{{\rm{Def}}(#1)}
\newtheorem{thm}{Theorem}[section]
\newtheorem{lem}[thm]{Lemma}
\newtheorem{prop}[thm]{Proposition}
\newtheorem{conj}[thm]{Conjecture}
\newtheorem{cor}[thm]{Corollary}
\theoremstyle{definition}
\newtheorem{defn}[thm]{Definition}
\newtheorem{rem}[thm]{Remark}
\newtheorem*{ack}{Acknowledgments}
\newtheorem{case}{Case}
\begin{document}

\begin{abstract}
In this paper, we prove the ampleness conjecture and Serrano's conjecture 
for strictly nef divisors on K-trivial fourfolds. 
Specifically, we show that any strictly nef divisors 
on projective fourfolds with trivial canonical bundle and vanishing irregularity are ample. 
\end{abstract}

\maketitle 

\tableofcontents

\section{Introduction}\label{sec1}

This paper is devoted to the study of strictly nef divisors
on projective manifolds with trivial canonical bundle, 
motivated by the following so-called  \textit{ampleness conjecture}: 

\begin{conj}[Ampleness conjecture]\label{conj.amp}
Let $X$ be a projective manifold with trivial canonical bundle. Then, any strictly nef divisors on $X$ are ample.
\end{conj}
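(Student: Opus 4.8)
I focus on the case the paper settles, namely $\dim X = 4$, $K_X \cong \mathcal{O}_X$, and $q(X) = h^1(X,\mathcal{O}_X) = 0$; let $L$ be a strictly nef divisor. The plan rests on two reductions. First, a \emph{semiampleness-to-ampleness} principle: if a strictly nef $L$ is semiample, then the morphism $\phi\colon X \to Z$ attached to $|mL|$ (with $mL = \phi^{*}A$ and $A$ ample on $Z=\phi(X)$) can contract no curve, for such a curve $C$ would satisfy $L\cdot C = \tfrac{1}{m}A\cdot \phi_{*}C = 0$, contradicting strict nefness; hence $\phi$ is finite and $L$ is ample. Second, since $K_X\equiv 0$, the Kawamata--Shokurov base-point-free theorem shows that a nef $L$ is semiample as soon as it is big (take $a$ so that $aL - K_X \equiv aL$ is nef and big). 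Combining the two, \emph{it suffices to prove that $L$ is big}, i.e. $L^{4}>0$.

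To exploit the geometry I pass to the Beauville--Bogomolov decomposition. As $K_X$ is trivial, a finite étale cover $\pi\colon\tilde X \to X$ splits as a product of a torus with Calabi--Yau and irreducible holomorphic symplectic factors; the hypothesis $q(X)=0$ is precisely what removes the torus factor. Because ampleness descends along the finite étale cover while strict nefness pulls back to it, and because $q=0$ forces $\Pic(\tilde X)=\bigoplus_i \Pic(X_i)$ so that $\pi^{*}L$ is a box-sum of strictly nef classes on the factors, I may instead prove that $\pi^{*}L$ is ample on $\tilde X$. Dimension and irregularity leave only two shapes: a product $\mathrm{K3}\times\mathrm{K3}$ or a single irreducible fourfold. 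The product case reduces factor by factor to the classical surface case, where a strictly nef divisor on a K3 must have positive self-intersection—otherwise it is, up to a multiple, pulled back from an elliptic fibration and kills a fibre—and is therefore big, hence semiample, hence ample. Thus the heart of the matter is the two irreducible four-dimensional cases.

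In the hyperkähler case the Fujiki relation gives $L^{4}=c\,q_X(L)^{2}$ with $c>0$ and $q_X$ the Beauville--Bogomolov--Fujiki form, so $L$ is big if and only if $q_X(L)>0$. The only alternative for a nonzero nef class is the isotropic boundary value $q_X(L)=0$; here I would invoke the structure of nef classes on the two known deformation types in dimension four, for which an isotropic nef class induces a Lagrangian fibration $X\to \mathbb{P}^{2}$ (the hyperkähler SYZ picture, available in these cases). Its fibre curves $C$ satisfy $L\cdot C = 0$, again contradicting strict nefness, so $q_X(L)>0$ and we conclude.

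The Calabi--Yau fourfold case is the main obstacle. Writing $\nu=\nu(L)\in\{1,2,3,4\}$ for the numerical dimension, the goal is to exclude $\nu\le 3$. If $L$ were semiample with $\nu<4$, its Iitaka-type fibration would have positive-dimensional fibres whose curves violate strict nefness; so the task is precisely to prove the relevant \emph{abundance}, that a nef $L$ of numerical dimension $\nu<4$ on a Calabi--Yau fourfold is semiample. The extreme cases are within reach of known abundance-type results: $\nu=1$ follows from abundance in numerical dimension one, and $\nu=3=\dim X-1$ can be attacked through the canonical-bundle-formula and adjunction machinery for divisors of codimension-one numerical dimension. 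The genuinely hard case is the intermediate value $\nu=2$, where no general abundance statement is available in dimension four; here one must use the Calabi--Yau specific structure—triviality of $K_X$, vanishing of the intermediate $h^{p,0}$, and Hodge-theoretic or positivity input (metric and $L^{2}$ methods, or a detailed analysis of the numerical null locus of $L$)—to produce either a semiample model or directly a curve on which $L$ vanishes. This $\nu=2$ analysis is where I expect essentially all the difficulty to lie, and where the restriction to K-trivial fourfolds with $q=0$ is indispensable.
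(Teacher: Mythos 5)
Your framework is sound as far as it goes: semiample plus strictly nef implies ample, and by the basepoint-free theorem (using $K_X\sim 0$) it would suffice to show $L^4>0$; the paper uses both of these facts. But at the two places where the real work happens, your plan either invokes an open conjecture or stops. In the hyperk\"ahler case you appeal to the SYZ picture ``for the two known deformation types in dimension four''; since the classification of hyperk\"ahler fourfolds up to deformation is open, this does not cover all cases, and the paper itself treats the relevant non-vanishing statement for hyperk\"ahler manifolds (its Conjecture \ref{SYZ}) as open, proving only a partial result. In the Calabi--Yau case you reduce the problem to showing that a \emph{nef} divisor with $\nu(L)\le 3$ is semiample --- but that is a case of the abundance/semiampleness conjecture for K-trivial fourfolds, which the paper explicitly notes is still open; you have replaced the target statement by a strictly harder one. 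The key point you are missing is that strict nefness must be exploited throughout, not just at the final ``contract a fibre curve'' step.

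The paper's route is built precisely to avoid these traps. It never splits $X$ into Beauville--Bogomolov factors and never proves bigness directly. Instead it first proves \emph{non-vanishing}, $\kappa(L)\ge 0$, case by case in $\nu(L)$: for $\nu(L)=3$ by Kawamata--Viehweg vanishing on a very ample hypersurface together with $\chi(\mathcal O_X)\ge 2$ from Riemann--Roch; for $\nu(L)\le 2$ by an analytic argument (Siu decomposition, Bedford--Taylor products and Lelong numbers, the hard Lefschetz theorem of Demailly--Peternell--Schneider), in which strict nefness is what kills the divisorial part of the current and the codimension-two Lelong sets. It then proves separately that a strictly nef $\mathbb Q$-effective divisor on a K-trivial fourfold is ample, by an induction on the components of a member of $|mL|$ using subadjunction, relative canonical models, and a solution of the \emph{log} version of Serrano's conjecture in dimension two. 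This second step --- passing from $\kappa(L)\ge 0$ to ampleness without the abundance conjecture --- is entirely absent from your proposal, and the first step is carried out for strictly nef divisors rather than for merely nef ones, which is exactly why the paper's argument closes while yours lands on open problems.
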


Here, a $\mathbb Q$-Cartier divisor 
$L$ on a projective variety $X$ is said to be \textit{strictly nef} 
if the intersection number $L \cdot C$ is positive for any curve $C \subset X$.
The ampleness conjecture follows 
from Serrano's conjecture (see \cites{ccp, serrano} and Conjecture \ref{serrano.log.version}) or 
the semiampleness conjecture (see \cites{kollar, lop1, lp20, liu-svaldi, verbitsky}). 
Hence, an affirmative solution of the ampleness conjecture provides evidence in support of these conjectures. 
Furthermore, the ampleness conjecture does not hold 
without assuming that $X$ has the trivial canonical bundle 
(for example, see Mumford and Ramanujam's examples in ~\cite{hart-1}*{appendix to Chapter I} 
or  Campana and Okawa's examples in \cite{okawa}); 
therefore, it is also interesting in terms of studying  
characteristics of projective manifolds with trivial canonical bundle.

The Beauville--Bogomolov--Yau decomposition \cite{beauville} shows that 
any projective manifold with trivial canonical bundle can be decomposed, 
up to a finite \'etale cover, 
into the product of abelian varieties, (strict) Calabi--Yau manifolds, and (simple) hyperk\"ahler  manifolds. 
In this paper,  a simply connected projective manifold $X$ with trivial canonical bundle 
is called a \textit{Calabi--Yau manifold} (resp. a \textit{hyperk\"ahler manifold})  
if $h^i(X, \mathcal{O}_X)=0$ for $0<i<\dim X$ (resp. $H^2(X, \mathcal O_X) \cong \mathbb C$). 
The ampleness conjecture holds for abelian  varieties (for example, see \cite{bl}), 
and both strict nefness and ampleness are preserved under finite coverings; 
hence, the projective manifold $X$ in Conjecture \ref{conj.amp} 
can be assumed to be a Calabi--Yau manifold or a  hyperk\"ahler manifold.

The ampleness conjecture has been completely solved in dimension two,
and in dimension three except for Calabi--Yau threefolds  satisfying  $c_3(X) = 0$ or $\nu(L)  = 1$ 
(see \cites{liu-svaldi, oguiso, serrano} and the references therein); 
however, the remaining case in dimension three is still open. 
In this paper, we leave the three-dimensional case behind and turn to K-trivial fourfolds. 
Throughout this paper, we use {\textbf{the term of a K-trivial manifold 
to denote a projective manifold $X$  with $K_X \sim 0$ and $h^{1}(X, \mathcal{O}_{X})=0$}}. 
The following result, which is one of the main results in this paper, 
solves Conjecture \ref{conj.amp} for K-trivial fourfolds.

\begin{thm}\label{thm-main}
Any strictly nef $\mathbb Q$-Cartier divisor $L$ 
on a K-trivial fourfold $X$ is ample.
\end{thm}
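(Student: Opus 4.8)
The plan is to prove the stronger statement that any strictly nef $\mathbb{Q}$-divisor $L$ on $X$ is \emph{semiample}, from which ampleness follows immediately: if $\phi_{|mL|}\colon X\to Y$ denotes the morphism defined by a sufficiently divisible multiple, then a curve contracted by $\phi$ would satisfy $L\cdot C=0$, contradicting strict nefness, so $\phi$ is finite and $L$ is the pullback of an ample class, hence ample. Since $K_X\sim 0$, semiampleness of the nef divisor $L$ follows once $L$ is \emph{abundant}, i.e.\ $\kappa(X,L)=\nu(L)$, by Kawamata's theorem that a nef abundant divisor is semiample. Thus the whole problem reduces to establishing abundance for $L$; note that, conversely, abundance already forces ampleness, because an Iitaka fibration with $1\le \nu(L)\le 3$ would have positive-dimensional fibers on which $L$ is numerically trivial, again contradicting strict nefness. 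Hence I expect the theorem to be equivalent to proving $\nu(L)=\dim X=4$, that is, that $L$ is big.

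First I would dispose of the big case and organize the reduction. If $\nu(L)=4$, then $L$ is nef and big, and since $aL-K_X\equiv aL$ is nef and big for every $a>0$, the base-point-free theorem shows $L$ is semiample, and the argument above gives ampleness. For the general structure I would pass, via the Beauville--Bogomolov decomposition, to a finite \'etale cover splitting $X$ into abelian, Calabi--Yau, and hyperk\"ahler factors; as strict nefness, the numerical dimension, and semiampleness are all preserved by and descend along finite \'etale maps, it suffices to treat these building blocks, the abelian and $K3$ factors being covered by the known (generalized) abundance for nef divisors on abelian varieties and surfaces. The core cases are therefore simply connected Calabi--Yau and hyperk\"ahler fourfolds. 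In the hyperk\"ahler case I would argue using the Beauville--Bogomolov quadratic form $q$: a nef class with $q(L)=0$ lies on the boundary of the positive cone and, by the structure of (rational) Lagrangian fibrations, would be trivial on the fibers of such a fibration, contradicting strict nefness; hence $q(L)>0$, so $L^4\neq 0$ by the Fujiki relation and $L$ is big.

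The heart of the matter is the Calabi--Yau fourfold case, where I must rule out $1\le\nu(L)\le 3$. Here I would exploit Riemann--Roch on the fourfold: using $c_1(X)=0$,
\[
\chi(X,mL)=\frac{m^4}{24}\,L^4+\frac{m^2}{24}\,c_2(X)\cdot L^2+\chi(X,\mathcal{O}_X),
\]
together with the positivity $c_2(X)\cdot L^2\ge 0$ coming from the Ricci-flat K\"ahler--Einstein metric (Yau), which makes $T_X$ polystable and yields the Bogomolov--Miyaoka inequality, with strict inequality as soon as $\nu(L)\ge 2$. Assuming $L$ is not big, the quartic term vanishes and $\chi(X,mL)$ grows at most quadratically but is eventually positive; to convert this into a nonvanishing statement $\kappa(X,L)\ge 1$, I would control the higher cohomology $H^{\ge 2}(X,mL)$ of the nef line bundle $mL$ by its numerical dimension, using a vanishing theorem for nef line bundles of the type provable by singular-metric and harmonic-integral methods. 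Once nonvanishing is in hand, analyzing the resulting Iitaka fibration and iterating (or invoking the known abundance in the fibers) should upgrade $\kappa(X,L)\ge 1$ to full abundance $\kappa(X,L)=\nu(L)$, which then contradicts strict nefness unless $\nu(L)=4$.

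The main obstacle is precisely the smallest numerical dimension $\nu(L)=1$ on a Calabi--Yau fourfold, where $L^2\equiv 0$, so $c_2(X)\cdot L^2=0$ and $\chi(X,mL)=\chi(X,\mathcal{O}_X)$ carries no information; this is the exact analogue of the case left open in dimension three. Overcoming it is where four-dimensionality must be used decisively: I expect the fourfold-specific positivity (the behavior of $c_2(X)$ as a movable class and the finer Riemann--Roch terms $c_2(X)^2$ and $c_4(X)$), together with the vanishing theorems for nef line bundles, to be exactly what forces a nonzero section, and hence abundance, in this critical case.
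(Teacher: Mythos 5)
Your reduction ``abundant $\Rightarrow$ semiample $\Rightarrow$ ample'' is sound as far as it goes, but the two places where you actually have to produce the mathematics are precisely where the proposal falls back on open conjectures. First, your plan to upgrade $\kappa(X,L)\geq 1$ to full abundance $\kappa(X,L)=\nu(L)$ by ``analyzing the resulting Iitaka fibration and iterating'' is essentially the abundance/semiampleness conjecture for K-trivial fourfolds, which is open; nothing in your sketch supplies the missing step. The paper avoids this entirely: it proves only the non-vanishing $\kappa(L)\geq 0$ (Theorems \ref{thm.num3} and \ref{thm-num=2}), and then, instead of abundance, uses the strict nefness decisively. Given one effective divisor $E\sim mL$, it runs Serrano's inductive scheme on the components of $E$ --- normalization, relative canonical models, subadjunction, and the two-dimensional log version of Serrano's conjecture (Theorem \ref{surf-big}, proved here in full generality) --- to contradict $L^4=0$ (Theorem \ref{thm.abundance}). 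Second, your treatment of the hyperk\"ahler case assumes that a nef class with $q_X(L)=0$ sits on the fibers of a (rational) Lagrangian fibration; the existence of such a fibration is the hyperk\"ahler SYZ conjecture, still open, and the paper only obtains a partial statement in that direction (Theorem \ref{main2}) which it does \emph{not} use for the main theorem. The paper in fact never splits off the hyperk\"ahler case: it works uniformly with K-trivial fourfolds via $\chi(\mathcal O_X)\geq 2$ from Hirzebruch--Riemann--Roch.

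There is also a gap in how you propose to get non-vanishing when $L$ is not big. A ``vanishing theorem for nef line bundles'' killing $H^{\geq 2}(X,mL)$ in terms of $\nu(L)$ does not exist in the generality you need (for $\nu(L)=3$ the paper gets $H^{\geq 2}(X,mL)=0$ by restricting to a very ample hypersurface and applying Kawamata--Viehweg there, which works because $L|_H$ is big; for $\nu(L)\leq 2$ no such elementary argument is available). The paper's actual mechanism for $\nu(L)\leq 2$ is the hard Lefschetz theorem of Demailly--Peternell--Schneider with multiplier ideals, combined with a Siu decomposition and Lelong-number estimates via Bedford--Taylor products to force the multiplier ideal subschemes into codimension $\geq 3$ (Lemma \ref{lem.partial.amp} and Theorem \ref{thm-num=2}); and the key algebraic input Lemma \ref{lem.pseff} (no effective $D$ with $L-D$ pseudoeffective) itself depends on the log Serrano results of Section \ref{sec3}, so the ``abundance'' and ``non-vanishing'' halves are intertwined rather than sequential. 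Your closing admission that the $\nu(L)=1$ case is where ``fourfold-specific positivity'' should ``force a nonzero section'' is exactly the part that requires this analytic machinery; as written it is an expectation, not an argument.
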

Note that the three-dimensional case cannot be reduced to the four-dimensional case, 
because the condition $h^{1}(X, \mathcal{O}_{X})=0$, 
which is also called vanishing irregularity, excludes 
the product of a Calabi--Yau manifold and an abelian variety from K-trivial manifolds. 

The proof of Theorem \ref{thm-main} is divided into two parts: Theorem \ref{thm.2} and \ref{thm.1}.
Theorem \ref{thm.2} solves the generalized non-vanishing conjecture (for example, see \cite{lp20}) 
and Theorem \ref{thm.1}  solves the abundance conjecture for 
strictly nef and effective divisors on K-trivial fourfolds.

\begin{thm}[Theorems \ref{thm.num3} and \ref{thm-num=2}]\label{thm.2}
If $L$ is a strictly nef $\mathbb Q$-Cartier divisor on a  K-trivial fourfold $X$, 
then we have $\kappa(L) \geq 0$ $($i.e., $L$ is $\mathbb Q$-effective$)$.  
\end{thm}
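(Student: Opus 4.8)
The plan is to prove that a strictly nef $\mathbb{Q}$-Cartier divisor $L$ on a K-trivial fourfold $X$ satisfies $\kappa(L)\ge 0$, and the natural strategy is to split according to the numerical dimension $\nu(L)$ of $L$, which ranges over $\{1,2,3,4\}$ since $L$ strictly nef forces $L$ to be nontrivial and nef. When $\nu(L)=4$, i.e. $L$ is big and nef, $\mathbb{Q}$-effectivity is immediate (for instance from Riemann--Roch together with the vanishing of higher cohomology, using $K_X\sim 0$), so the content lies in the intermediate values. First I would dispose of the top-dimensional and near-top cases using the Beauville--Bogomolov--Yau decomposition: after passing to a finite \'etale cover $X$ decomposes into Calabi--Yau and hyperk\"ahler factors (the abelian factors being excluded by $h^1(X,\mathcal{O}_X)=0$), so I may assume $X$ is either Calabi--Yau or hyperk\"ahler of dimension four, and I would exploit the special Hodge-theoretic structure of each.

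\smallskip

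Next I would attack the cases $\nu(L)=2$ and $\nu(L)=3$, which the excerpt itself flags as the two pieces Theorem \ref{thm-num3} and Theorem \ref{thm-num=2}. The idea is to use the generalized non-vanishing philosophy: one wants to produce a section by running a suitable positivity argument. For $\nu(L)=3$ I expect one can combine the nef and strictly nef hypotheses with the structure of the null locus or the numerical dimension to reduce to lower-dimensional loci where non-vanishing is already known, and then lift sections. Concretely, I would study the restriction of $L$ to subvarieties cut out by the failure of bigness, and use that on a K-trivial fourfold the canonical bundle is trivial so adjunction introduces no correction term; this keeps restrictions of $L$ strictly nef (or at least nef and non-trivial) on the relevant subvarieties, where an inductive or direct argument applies.

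\smallskip

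The technical heart, and the step I expect to be the main obstacle, is the case of small numerical dimension, especially $\nu(L)=1$. Here $L$ is nef with $L^2\equiv 0$ in the appropriate sense, so bigness is entirely unavailable and one cannot extract sections from volume-type estimates. The plan is to invoke the abundance-type or generalized non-vanishing results available for K-trivial manifolds, and to use the special geometry of Calabi--Yau and hyperk\"ahler fourfolds: for hyperk\"ahler manifolds the Beauville--Bogomolov--Fujiki form controls the numerical behavior of nef classes, and a nef class with $q(L)=0$ is expected to be (up to scaling and deformation) the pullback of an ample class under a Lagrangian fibration, which would give effectivity; for Calabi--Yau fourfolds one must instead rely on a non-vanishing theorem, presumably via the abundance machinery and the triviality of $K_X$. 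Overcoming the absence of positivity in this degenerate regime, and relating the numerical data of $L$ to an actual fibration or section, is where the real difficulty lies.

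\smallskip

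Finally, once each numerical dimension is handled, I would assemble the cases: in every range $1\le \nu(L)\le 4$ we have exhibited $\kappa(L)\ge 0$, and since the Beauville--Bogomolov--Yau reduction preserves both strict nefness and $\mathbb{Q}$-effectivity under the finite \'etale cover, the conclusion descends to the original $X$. This yields the stated inequality $\kappa(L)\ge 0$, i.e. $L$ is $\mathbb{Q}$-effective.
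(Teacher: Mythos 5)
Your case division by $\nu(L)$ matches the paper's overall architecture, and you correctly locate the difficulty in low numerical dimension, but the proposal has genuine gaps at precisely the points where the work is done. For $\nu(L)=3$ you propose to ``reduce to lower-dimensional loci where non-vanishing is already known, and then lift sections''; lifting sections requires surjectivity of restriction maps, which is exactly a vanishing statement, and you never produce it. The paper's actual argument (Theorem \ref{thm.num3}) is short and concrete: restrict to a very ample hypersurface $H$, where $L|_H$ is big and nef, use Kodaira and Kawamata--Viehweg vanishing on the sequence $0\to \mathcal O_X(mL)\to \mathcal O_X(H+mL)\to \mathcal O_H(K_H+mL)\to 0$ to kill $H^i(X,mL)$ for $i\geq 2$, and then conclude $h^0(X,mL)\geq \chi(X,mL)\geq 2$ from Hirzebruch--Riemann--Roch and $\chi(\mathcal O_X)\geq 2$ (Lemma \ref{lem.fc}). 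Nothing in your sketch supplies this.

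The more serious gap is in the case $\nu(L)\leq 2$. There you defer to ``abundance-type or generalized non-vanishing results available for K-trivial manifolds'' and, for hyperk\"ahler factors, to the expectation that a nef class with $q_X(L,L)=0$ comes from a Lagrangian fibration. The latter is the hyperk\"ahler SYZ conjecture, which is open (the paper states it as Conjecture \ref{SYZ} and proves only a partial result with an escape clause), and no generalized non-vanishing theorem of the kind you invoke exists for Calabi--Yau fourfolds; so this step is circular or vacuous as written. The paper's Theorem \ref{thm-num=2} instead exploits strict nefness in two essential ways you do not anticipate: first, Lemma \ref{lem.pseff} (resting on the log Serrano results, Theorem \ref{thm.abundance.eff} and Corollary \ref{cor.abundance.eff}) shows $L-D$ is never pseudoeffective, which kills the divisorial part of the Siu decomposition of a positive curvature current in $c_1(L)$; second, a Bedford--Taylor/Demailly-regularization argument shows the Lelong numbers vanish along all codimension-two subvarieties, again using strict nefness to rule out $\{S\}\cdot L=0$. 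Skoda's lemma then bounds the multiplier-ideal subschemes, and the hard Lefschetz theorem of Demailly--Peternell--Schneider together with the Lazi\'c--Peternell vanishing $H^0(X,\Omega^2_X(mL))=0$ forces $H^2(X,mL)=0$, so Riemann--Roch again produces sections (Lemma \ref{lem.partial.amp}). Finally, note that your opening reduction via the Beauville--Bogomolov--Yau decomposition is not used in the paper and is unnecessary: the proof works directly with the hypotheses $K_X\sim 0$ and $h^1(X,\mathcal O_X)=0$, whose only role is to guarantee $\chi(\mathcal O_X)\geq 2$.
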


The proof of Theorem \ref{thm.2}, which is discussed in Section \ref{sec4}, 
depends on detailed studies for cases according to the numerical dimension $\nu(L)$. 
Based on the discussion of~\cites{lop, lp, liu-svaldi, oguiso, wilson, wilson1}, 
we will construct a section of a positive multiple of $L$ 
by applying the Kawamata--Viehweg vanishing theorem, 
the hard Lefschetz theorem, 
Hirzebruch--Riemann--Roch formula, 
and the Bedford--Taylor product of positive currents.

\begin{thm}[Theorem \ref{thm.abundance}]\label{thm.1}
Let $L$ be a strictly nef $\mathbb Q$-Cartier divisor on  a K-trivial fourfold $X$.
If we have $\kappa(L)\geq 0$, then $L$ is ample.
\end{thm}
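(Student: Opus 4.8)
The plan is to reduce the ampleness of $L$ to its semiampleness, and then to establish semiampleness by proving that $L$ is big. The first reduction is elementary: suppose $L$ is semiample, so that $|mL|$ is base-point-free for some $m>0$ and defines a morphism $\phi\colon X\to Y$ with $mL\sim \phi^{*}A$ for an ample divisor $A$ on $Y$. For any curve $C\subset X$ we then have $0<L\cdot C=\tfrac1m\,A\cdot\phi_{*}C$, so $\phi$ contracts no curve; hence $\phi$ is finite, $\dim Y=4$, and $L$ is the pullback of an ample divisor by a finite morphism, therefore ample. In particular this argument shows that any strictly nef semiample divisor is automatically big, so bigness of $L$ is a necessary feature of the desired conclusion rather than an extra input.

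Granting bigness, semiampleness is immediate from the base-point-free theorem: since $K_X\sim 0$, the divisor $aL-K_X\sim_{\mathbb Q}aL$ is nef and big for every $a>0$, and applying the Kawamata--Shokurov base-point-free theorem to the (terminal, hence klt) pair $(X,0)$ shows that $L$ is semiample. Combined with the previous paragraph, it therefore suffices to prove that a strictly nef divisor $L$ with $\kappa(L)\geq 0$ on a K-trivial fourfold is big, i.e. that $\nu(L)=4$.

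To prove bigness I would argue by contradiction, assuming $1\le\nu(L)\le 3$ (the value $\nu(L)=0$ is excluded because strict nefness forbids $L\equiv 0$). By the Beauville--Bogomolov--Yau decomposition \cite{beauville}, and because both strict nefness and ampleness are preserved and reflected under finite \'etale covers, I may assume that $X$ is either a Calabi--Yau or a hyperk\"ahler fourfold. In the hyperk\"ahler case I would use the Beauville--Bogomolov--Fujiki quadratic form $q$: a nef class satisfies $q(L)\ge 0$, and $q(L)=0$ would place $L$ on the boundary of the nef cone, where the structure theory of nef boundary classes produces a fibration (of Lagrangian type) on whose fibres $L$ restricts to a class of numerical dimension zero, yielding curves $C$ with $L\cdot C=0$ and contradicting strict nefness; hence $q(L)>0$, and the Fujiki relation $L^{4}=c\,q(L)^{2}$ with $c>0$ gives bigness.

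The Calabi--Yau case is the main obstacle. Here I would exploit the effective divisor $D\sim_{\mathbb Q}mL$ provided by $\kappa(L)\ge 0$ together with the numerical-dimension analysis underlying Theorem \ref{thm.2}, extending to fourfolds the threefold techniques of Serrano, Oguiso and Wilson \cites{serrano,oguiso,wilson}: intersecting $L$ with the components of $D$, applying Riemann--Roch and the Kawamata--Viehweg vanishing theorem to control $h^{0}$ of multiples of $L$ restricted to these components, and combining the resulting positivity of intersection numbers $L^{k}\cdot V$ with strict nefness to rule out each of the cases $\nu(L)=1,2,3$. The difficulty is precisely that, unlike in the big case, no vanishing theorem is directly available for a non-big $L$, so the argument must extract bigness from the interplay between the strictly positive intersection numbers $L\cdot C$ and the geometry of the fixed effective divisor $D$; controlling the positive-dimensional general fibre of the Iitaka fibration of $L$, on which $L$ has Kodaira dimension zero yet remains strictly nef, is the crux of the estimate.
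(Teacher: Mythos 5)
Your reduction of the theorem to the single inequality $L^{4}>0$ is correct and matches the paper's endgame: once $L$ is nef and big, the basepoint-free theorem applied with $K_X\sim 0$ gives semiampleness, and a strictly nef semiample divisor is ample. The problem is that the actual content of the theorem --- proving $L^{4}>0$ --- is not established by your argument. In the hyperk\"ahler branch you assert that a nef class with $q_X(L,L)=0$ produces a Lagrangian fibration whose fibres carry curves with $L\cdot C=0$. That is precisely the hyperk\"ahler SYZ / non-vanishing circle of problems (Conjecture \ref{SYZ} in the paper, together with Beauville's question), which is open; the paper proves only a partial statement (Theorem \ref{main2}) and pointedly does not use it in the proof of Theorem \ref{thm.abundance}. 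So this branch of your argument rests on an unproved conjecture. (It could plausibly be repaired here by exploiting $\kappa(L)\ge 0$ together with the Hodge index theorem for $q_X$, in the spirit of Remark \ref{rem.hk}, but you would have to actually do that.)

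In the Calabi--Yau branch --- which you yourself identify as ``the main obstacle'' --- you describe a programme (intersect $L$ with components of the effective divisor, apply Riemann--Roch and vanishing, control the Iitaka fibration) but carry out none of it; this is exactly where all the work in the paper lies. The paper's proof takes a component $V=E_1$ of $E\sim mL$, passes to a relative canonical model $T$ of its normalization, uses subadjunction to write $K_T+B=\tfrac{m}{a_1}\mu^{*}L|_T$ with $B$ effective, so that $K_T+B$ is almost strictly nef; it then deduces the vanishing of all relevant triple intersection numbers on $T$ from the fact that $K_T+tD$ is nef but not big, and splits into two cases according to whether $B$ has a non-$\mu$-exceptional component. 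The first case is killed by the two-dimensional bigness theorem for almost strictly nef divisors on log canonical surface pairs (Theorem \ref{surf-big}, proved earlier in the paper and not present in your proposal), and the second reduces to $K_T\sim_{\mathbb Q}L|_V$ strictly nef, which is killed by the abundance theorem for threefolds. None of these ingredients --- the relative canonical model, the surface case of Conjecture \ref{big-conj}, the case division on the exceptionality of $B$ --- appears in your write-up, so what you have is a correct reduction plus a statement of the difficulty, not a proof.
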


Theorem \ref{thm.1} follows from the abundance theorem, 
which holds for projective varieties of dimension $\leq 3$; 
however, the abundance conjecture is still open, even in the case of K-trivial fourfolds. 
Therefore, we need to get around this difficulty by fully using the strict nefness.

For the proof of Theorem \ref{thm.1}, we focus on Serrano's  conjecture. 
Serrano's original conjecture can be viewed 
as a weak analogue of Fujita's conjecture (see ~\cite{laz}*{\S~10.4.A}) for strictly nef divisors; 
hence, as in Fujita's conjecture, 
it is natural to generalize Serrano's conjecture to log pairs. 
In this paper, we propose a log version of Serrano's conjecture (see Conjecture \ref{serrano.log.version}). 
The log version naturally appears in considering the inductive proof of Serrano's conjecture.

\begin{conj}[Log version of Serrano's conjecture] \label{serrano.log.version}
Let $(X,\Delta)$ be a log canonical pair of dimension $n$ and $L$ be a strictly nef Cartier divisor on $X$.
Then, the $\mathbb Q$-Cartier divisor $K_X+\Delta+tL$ is ample for $t>2n$.
\end{conj}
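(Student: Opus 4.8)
The plan is to deduce ampleness from the Nakai--Moishezon--Kleiman criterion, isolating a purely numerical ``positivity on curves'' step, which is where the bound $t>2n$ enters, from a ``bigness'' step, which is the genuine difficulty. Write $D:=K_X+\Delta+tL$. Since the cone theorem and the base-point-free theorem are available for log canonical pairs by Fujino's work, I would carry out the whole argument directly for the lc pair $(X,\Delta)$ rather than reducing to the klt case.

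First I would show that $D$ is nef. For an irreducible curve $C$ with $(K_X+\Delta)\cdot C\ge 0$ one has $D\cdot C\ge t\,(L\cdot C)>0$ because $L$ is strictly nef. The curves on which $K_X+\Delta$ is negative are governed by the cone theorem: by Fujino's estimate for the length of extremal rays of an lc pair, $\NE(X)$ is generated by $\NE(X)_{K_X+\Delta\ge 0}$ together with finitely many rays $\mathbb{R}_{\ge 0}[C_i]$ whose generators satisfy $-(K_X+\Delta)\cdot C_i\le 2n$. As $L$ is Cartier and strictly nef we have $L\cdot C_i\ge 1$, hence $D\cdot C_i\ge -2n+t>0$ precisely when $t>2n$; this is the origin of the constant in Conjecture \ref{serrano.log.version}. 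Kleiman's criterion then yields nefness of $D$. I would next observe that nefness here already forces strict nefness: if $D\cdot C=0$ for some irreducible curve $C$, the cone decomposition forces $[C]\in \NE(X)_{K_X+\Delta\ge 0}$, so $(K_X+\Delta)\cdot C\ge 0$ and therefore $D\cdot C\ge t\,(L\cdot C)>0$, a contradiction.

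The remaining, and essential, point is to pass from ``nef'' to ``ample.'' My plan is to prove that $D$ is in addition big, i.e.\ $D^n>0$. Granting this, for $a\ge 2$ the divisor $aD-(K_X+\Delta)=(a-1)D+tL$ is nef and big, so the base-point-free theorem produces a semiample model $\varphi\colon X\to Y$ with $mD=\varphi^*A$ for some ample $A$ and $m\gg 0$. Because $D$ is strictly nef, $\varphi$ contracts no curve and is therefore finite, whence $D$ is ample. One can organize the same reduction through Nakai--Moishezon and induction on $\dim X$, restricting $D$ to log canonical centers and general subvarieties $V$, where $D|_V$ again has the form $K_V+\Delta_V+t\,L|_V$ with $L|_V$ strictly nef; but the base-point-free route is cleaner and still leaves $D^n>0$ as the only missing input.

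The step I expect to be the main obstacle is exactly the bigness $D^n>0$. If $L$ were big this would be immediate, but a strictly nef divisor need not be big---its numerical dimension $\nu(L)$ can be as small as $1$---and in that regime $D^n>0$ cannot be extracted formally from nefness. Establishing it requires genuine positivity input, namely a non-vanishing statement ensuring $\kappa(D)\ge 0$ followed by an abundance-type conclusion upgrading effectivity to bigness. This is precisely the gap that keeps Conjecture \ref{serrano.log.version} open in general, and it is why the present paper must furnish such input directly in the K-trivial fourfold case through the non-vanishing Theorem \ref{thm.2} and the abundance-for-strictly-nef-divisors Theorem \ref{thm.1}, whose proofs rely on Kawamata--Viehweg vanishing, Hirzebruch--Riemann--Roch, and positivity of Bedford--Taylor products in place of the missing general argument.
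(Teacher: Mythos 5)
The statement you were asked to prove is a \emph{conjecture} in the paper: the authors never prove Conjecture \ref{serrano.log.version} in general, and they explicitly note that even Serrano's original version is open in dimension three. So there is no full proof to compare against, and your proposal is as complete as the state of the art allows. What you do prove matches the paper's own partial results exactly: your nefness step (cone theorem for lc pairs, length bound $-(K_X+\Delta)\cdot C_i\le 2n$, $L\cdot C_i\ge 1$ since $L$ is Cartier and strictly nef) is precisely Lemma \ref{nef-lem}; your upgrade from nef to strictly nef and your base-point-free reduction ``nef $+$ big $\Rightarrow$ semiample $\Rightarrow$ ample'' is exactly how the paper completes the two-dimensional case in Corollary \ref{surf.ample} after establishing bigness in Theorem \ref{surf-big}. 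The gap you isolate --- that $D^n>0$ cannot be extracted formally and requires non-vanishing plus abundance-type input --- is precisely the paper's Conjecture \ref{big-conj} (quoted from \cite{ccp}), which the authors resolve only in dimension two and, for K-trivial fourfolds, only along the special divisors arising in Theorems \ref{thm.abundance} and \ref{thm.abundance.eff}. In short: your reduction is correct, follows the same route the paper takes where it can, and correctly names the open ingredient; it is not, and could not be, a proof of the conjecture.
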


Serrano's conjecture has not been fully solved even for K-trivial threefolds. 
Nevertheless, we can partially solve the log version of Serrano's conjecture (see Theorem \ref{thm.3}), 
which is one of the main results of this paper.

\begin{thm}[Theorems \ref{thm.abundance.eff}, \ref{thm.ab.dim3} and Corollary \ref{cor.abundance.eff}]
\label{thm.3}
Let $X$ be a K-trivial manifold of dimension $n\leq 4$, 
$\Delta$ be a nonzero effective divisor, and $L$ be a strictly nef  divisor on $X$. 
Then, the Cartier divisor $\Delta+tL$ is ample for $t\gg 1$.
\end{thm}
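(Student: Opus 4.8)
The plan is to verify the Nakai--Moishezon--Kleiman criterion for $\Delta+tL$ by induction on $n=\dim X$, with the base cases $n\leq 3$ supplied by Theorem \ref{thm.ab.dim3} (where log abundance for $K_X+\Delta+tL\sim\Delta+tL$ is available and strict nefness upgrades the resulting semiample class to an ample one). For the inductive step it suffices to produce a single $t_0$ so that
\[
(\Delta+tL)^{\dim V}\cdot V>0 \quad\text{for every irreducible }V\subseteq X,\ \dim V\geq 1,\ t\geq t_0 .
\]
The natural inductive statement restricts the problem to each such $V$: when $V\not\subseteq\Supp\Delta$ the pullback $\Delta|_V$ is again a nonzero effective divisor and $L|_V$ is strictly nef, so $(\Delta|_V+tL|_V)$ is ample for $t\gg1$ by the induction hypothesis; the genuinely new content is therefore concentrated on $V\subseteq\Supp\Delta$ and on $V=X$ itself.

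The engine for the remaining subvarieties is the dichotomy governed by the bigness of $L$. If $L$ is big and nef, its null locus $\mathrm{Null}(L)=\bigcup\{V:L^{\dim V}\cdot V=0\}$ is a proper closed subset coinciding with the augmented base locus, and $L|_V$ is already ample for every $V\not\subseteq\mathrm{Null}(L)$; such $V$ then satisfy $(\Delta+tL)^{\dim V}\cdot V\geq t^{\dim V}\,L^{\dim V}\cdot V>0$ by nefness, so only the components of $\mathrm{Null}(L)\cap\Supp\Delta$ remain, and these have strictly smaller dimension and fall under the induction (this is the packaging of Theorem \ref{thm.abundance.eff} and Corollary \ref{cor.abundance.eff}). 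If instead $L$ is not big, the leading term of $(\Delta+tL)^{\dim V}\cdot V$ vanishes and positivity must be extracted from the first nonzero intersection number $L^{k}\cdot\Delta^{\dim V-k}\cdot V$ with $k=\nu(L|_V)$; here the effectivity and nonvanishing of $\Delta$, combined with strict nefness of $L$ on the effective cycle $\Delta\cap V$ and the induction applied to its components, is what forces this coefficient to be positive, so that a sufficiently large power of $t$ dominates.

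The main obstacle is the non-big regime together with the demand that a \emph{single} $t_0$ work simultaneously for all $V$: strict nefness guarantees positivity only on curves, so for a higher-dimensional $V$ on which $L$ fails to be big I cannot invoke bigness, and the numerically $L$-degenerate subvarieties may move in positive-dimensional families, which obstructs any naive pointwise choice of $t$. Resolving this is where the K-trivial hypothesis is essential: after the Beauville--Bogomolov reduction \cite{beauville} to the case where $X$ is Calabi--Yau or hyperk\"ahler, the loci on which $L$ is numerically degenerate are rigidly constrained, and the non-vanishing Theorem \ref{thm.2} (giving $\kappa(L)\geq 0$) lets me replace $L$ by an effective divisor in the same numerical class whose components interact controllably with $\Supp\Delta$. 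The fourfold case is then completed by treating the finitely many values $\nu(L)\in\{1,2,3,4\}$ in turn and assembling the resulting local bounds into the uniform $t_0$.
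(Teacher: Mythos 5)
Your proposal has a genuine gap at its core: the induction is set up on the dimension of the ambient K-trivial manifold, but you then apply the ``induction hypothesis'' to arbitrary subvarieties $V\subset X$ (components of $\Supp\Delta$, components of $\mathrm{Null}(L)$, components of $\Delta\cap V$). A subvariety of a K-trivial fourfold is essentially never a K-trivial manifold, so the inductive statement says nothing about $(V,\Delta|_V,L|_V)$; moreover $L|_V$ is only \emph{almost} strictly nef after passing to a resolution or relative canonical model, since strict nefness is destroyed by birational modifications. What your induction actually requires is Conjecture \ref{serrano.log.version} (or Conjecture \ref{big-conj}) for log canonical pairs in dimension $\leq 3$, which is open; the paper only establishes the surface case (Theorem \ref{surf-big}, Corollary \ref{surf.ample}) and must therefore work quite hard in Theorem \ref{thm.abundance.eff} --- normalization, relative canonical model, subadjunction to write $\mu^*V|_T=K_T+B$, and a case analysis on whether components of $B$ are $\mu$-exceptional --- precisely to reduce the threefold components to surfaces. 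Relatedly, your key positivity claim in the non-big regime, that the first nonvanishing coefficient $L^{k}\cdot\Delta^{\dim V-k}\cdot V$ is positive because of ``strict nefness of $L$ on the effective cycle $\Delta\cap V$,'' is unsubstantiated: strict nefness controls intersections with curves only, and $\Delta^{\dim V-k}\cdot V$ is not an effective curve class in general. Proving exactly this kind of positivity \emph{is} the content of Theorem \ref{thm.abundance.eff}, so you cannot assume it.

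Two further problems. First, invoking Theorem \ref{thm.2} (non-vanishing, $\kappa(L)\geq 0$) inside this proof is circular: its proof for $\nu(L)\leq 2$ goes through Lemma \ref{lem.pseff}, which explicitly cites Theorem \ref{thm.abundance.eff} and Corollary \ref{cor.abundance.eff}, i.e.\ the statement you are proving. Second, the case that actually forces the paper's hand is entirely absent from your sketch: when a component $V$ of $\Delta$ is a prime Calabi--Yau divisor (so that the threefold Serrano-type argument on $V$ bottoms out), the paper must leave $V$ altogether, use Proposition \ref{prop.cy} to realize $V$ as a fiber of a fibration $\pi\colon X\to C$, and combine Theorem \ref{thm.num3} with the Euler-characteristic bound $\chi(\mathcal{O}_X)\geq 2$ of Lemma \ref{lem.fc} to rule out $\kappa(L)=-\infty$ on the fibers (Corollary \ref{cor.abundance.eff}). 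Finally, note that once Theorems \ref{thm.abundance.eff}, \ref{thm.ab.dim3} and Corollary \ref{cor.abundance.eff} are granted, the intended deduction of the stated theorem is immediate --- write $\Delta=\sum a_iV_i$, observe each $V_i+t_iL$ is ample, and sum --- so the Nakai--Moishezon scaffolding you build is both unnecessary for that deduction and insufficient to replace the cited results.
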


Theorem \ref{thm.1} is a direct corollary of Theorem \ref{thm.3}. 
The key point here is that $\Delta$  in Theorem \ref{thm.3} is nonzero; 
otherwise, the statement of Theorem \ref{thm.3} coincides with 
Serrano's original conjecture, which is still open even in dimension three.

\begin{ack}
The authors would like to thank Prof. Vladimir Lazi\'c 
for reading a draft of this paper carefully and pointing out several mistakes. 
The discussions with him significantly improved this paper. 
The first author is very grateful to Prof. Chen Jiang for his useful conversations and for reading several drafts of this paper. 
He would like to thank Professors Osamu Fujino, Zhengyu Hu, 
Roberto Svaldi, and Zhiyu Tian 
for useful discussions and suggestions,
and to thank Prof. Wenfei Liu for pointing out that \cite{han-liu}*{Question 3.5} is stronger than Conjecture \ref{serrano.log.version}.
He also would like to thank Prof. Thomas Peternell for pointing out a disastrous gap in a draft of this paper.
The first author is supported by the NSFC (grant no.~12001018).
The second author was supported 
by Grant-in-Aid for Scientific Research (B) $\sharp$ 21H00976 
and Fostering Joint International Research (A) $\sharp$19KK0342 from JSPS. 
\end{ack}

Throughout this paper, 
a {\em scheme} is always assumed to be separated and of finite type over $\mathbb{C}$, 
and a {\em variety} is a reduced and irreducible algebraic scheme. 
We will freely use the basic notation of the minimal model program 
in \cites{fujino-foundations, kollar-mori, laz}.

\section{Preliminaries}\label{sec2}
In this section, we present preliminary results.

\subsection{Almost strictly nef divisors}

The strict nefness is not preserved under birational modifications, 
as in the inductive proof of Serrano's conjecture in \cites{ccp, serrano}.
Therefore, following \cite{ccp}, 
it would be more natural and useful 
to define a birational analogue of the strict nefness,
which is called the almost strict nefness. 

\begin{defn}[\cite{ccp}*{Definition 1.1}]\label{defn.asn}
A $\mathbb Q$-Cartier divisor $L$ on a normal variety $X$ 
is called {\em{almost strictly nef}}
if there exists a surjective birational 
morphism $\mu\colon X\to X^*$ and a strictly nef $\mathbb Q$-Cartier divisor $L^*$ on $X^*$
such that $L=\mu^*L^*$. 
When a birational morphism $\mu\colon X\to X^*$  is specified, 
we say that $L$ is {\em{almost strictly nef with respect to $\mu$}}. 
\end{defn}

We will use the following facts without any mention.

\begin{prop}\label{prop.asn}
Let $L$ be a $\mathbb Q$-Cartier divisor on a normal projective variety $X$.
\begin{itemize}
\item[(1)] Let $f\colon Y\to X$ be a finite morphism. 
Then, the pull-back $f^*L$ is strictly nef on $Y$ if and only if $L$ is strictly nef on $X$.
\item[(2)] If $f\colon Y\to X$ is a generally finite morphism and $L$ is almost strictly nef, 
then $f^*L$ is almost strictly nef on $Y$.
\item[(3)] Let $L$ be an almost strictly nef $\mathbb Q$-Cartier divisor
with respect to $\mu\colon X\to X^*$. 
If a normal subvariety $Z\subset X$ is not contained in the $\mu$-exceptional locus, 
then $L|_Z$ is almost strictly nef on $Z$.
\end{itemize}
\end{prop}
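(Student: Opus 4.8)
The plan is to prove the three assertions of Proposition \ref{prop.asn} separately, since they concern rather different behaviors of (almost) strict nefness under morphisms.

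\medskip

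For part (1), I would argue directly from the definition of strict nefness in terms of intersection numbers with curves. The nontrivial direction is to show that if $f^*L$ is strictly nef on $Y$, then $L$ is strictly nef on $X$. Given any curve $C \subset X$, I would lift it to a curve $C' \subset Y$ dominating $C$: since $f$ is finite and surjective (a finite morphism of projective varieties with $\dim Y = \dim X$ is surjective onto each component, and after base change I may pick an irreducible curve over $C$), there is an integer $d > 0$ with $f_* C' = d\, C$. By the projection formula, $f^*L \cdot C' = L \cdot f_* C' = d\,(L \cdot C)$. Strict nefness of $f^*L$ gives $f^*L \cdot C' > 0$, hence $L \cdot C > 0$. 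The converse direction is the same computation read in reverse: any curve $C' \subset Y$ maps to a curve (or point) in $X$, and the projection formula again transfers positivity, noting that $f^*L \cdot C' = (\deg f|_{C'})\, (L \cdot f(C'))$ when $f(C')$ is a curve, while a $C'$ contracted by $f$ cannot occur since $f$ is finite.

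\medskip

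For part (2), the idea is to combine part (1) with the defining factorization of almost strict nefness, after resolving the basic subtlety that a generically finite morphism need not be finite. Write $L = \mu^* L^*$ with $\mu \colon X \to X^*$ birational and $L^*$ strictly nef on $X^*$. I would take the Stein factorization of the composite $g \colon Y \to X \xrightarrow{\mu} X^*$, say $g \colon Y \xrightarrow{h} Y^* \xrightarrow{\nu} X^*$, where $h$ has connected fibers and $\nu$ is finite; since $g$ is generically finite, so is $\nu$, and because $X^*$ is normal $\nu$ is in fact finite onto its image, which we may take to be $X^*$ after possibly shrinking or using surjectivity. By part (1), $\nu^* L^*$ is strictly nef on $Y^*$, and then $f^*L = f^*\mu^* L^* = g^* L^* = h^* (\nu^* L^*)$ exhibits $f^*L$ as the pullback under the birational morphism $h$ of the strictly nef divisor $\nu^* L^*$, which is exactly the definition of almost strict nefness.

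\medskip

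For part (3), let $L = \mu^* L^*$ with $\mu \colon X \to X^*$ birational and $L^*$ strictly nef, and let $Z \subset X$ be a normal subvariety not contained in the $\mu$-exceptional locus $\Exc(\mu)$. The plan is to restrict $\mu$ to $Z$: set $Z^* = \mu(Z)$ and consider the induced morphism $\mu|_Z \colon Z \to Z^*$. Because $Z \not\subset \Exc(\mu)$, the map $\mu|_Z$ is birational onto its image $Z^*$ (it is an isomorphism over the dense open locus where $\mu$ is an isomorphism). Then $L|_Z = (\mu^* L^*)|_Z = (\mu|_Z)^* (L^*|_{Z^*})$, and $L^*|_{Z^*}$ is strictly nef on $Z^*$ since strict nefness is clearly inherited by subvarieties (any curve in $Z^*$ is a curve in $X^*$). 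This realizes $L|_Z$ as the pullback of a strictly nef divisor under a birational morphism, proving almost strict nefness. The main obstacle across all three parts is the bookkeeping around non-finite or merely generically finite maps in (2): one must genuinely invoke Stein factorization and the normality of the target to reduce to the clean finite case handled in (1), rather than hoping the projection formula applies directly.
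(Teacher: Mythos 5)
The paper offers no proof of Proposition~\ref{prop.asn} at all --- it is introduced with ``We will use the following facts without any mention'' --- so there is no argument of the authors' to compare against; your write-up is the standard verification and it is essentially correct. Part (1) via the projection formula (taking a component of $f^{-1}(C)$ dominating $C$, so that $f_*C'=dC$ with $d>0$), part (2) via Stein factorization of $\mu\circ f$ so that the connected-fibers part is birational and the finite part is handled by (1), and part (3) via the observation that $\mu|_Z\colon Z\to\mu(Z)$ is birational because $Z$ meets the locus where $\mu$ is an isomorphism, are exactly the intended reductions, and the identities $f^*\mu^*L^*=h^*(\nu^*L^*)$ and $(\mu^*L^*)|_Z=(\mu|_Z)^*(L^*|_{\mu(Z)})$ close each case against Definition~\ref{defn.asn}.

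One point worth stating explicitly rather than glossing: the ``only if'' direction of (1) and the whole of (2) require $f$ to be \emph{surjective} (equivalently, dominant, since the varieties are projective); a finite but non-surjective morphism, e.g.\ a closed immersion, falsifies the ``only if'' direction of (1) as stated. You half-acknowledge this (``which we may take to be $X^*$ after possibly shrinking or using surjectivity''), but the clean statement is simply that $f$ generically finite here means dominant, hence $g=\mu\circ f$ is surjective, hence so is the finite part $\nu$ of its Stein factorization; this matches the paper's only actual use of (1), in Remark~\ref{rem.asn}, where the finite morphism arises from a Stein factorization of a surjective map. With that caveat made explicit, the proof is complete.
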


\begin{rem}\label{rem.asn}
Let $\mu\colon X\to X^*$  be the birational 
morphism in Definition \ref{defn.asn} with a strictly nef divisor $L^*$ on $X^*$,
and $X\to X'\to X^*$ be the Stein factorization of $\mu$.
Then, since the morphism $\pi\colon X'\to X^*$ is finite, 
the pull-back $\pi^*L^*$  is also strictly nef by Proposition \ref{prop.asn} (1).
Hence, by replacing $X^*$ with $X'$ and $L^*$ with $\pi^*L^*$, 
we can always assume that $X^*$ is normal and
$\mu\colon X\to X^*$  has connected fibers.
\end{rem}

\begin{lem}\label{nef-lem}
Let $(X, \Delta)$ be a log canonical pair of dimension $n$ and 
$L$ be an almost strictly nef Cartier divisor with respect to $\mu\colon X\to X^*$.
If $K_X+\Delta$ is $\mu$-nef,
then the $\mathbb Q$-Cartier divisor $K_{X}+\Delta+tL$ is nef for $t\geq 2n$. 
In particular, if $L$ is a strictly nef Cartier divisor on $X$, 
then  $K_{X}+\Delta+tL$ is nef for $t\geq 2n$. 
\end{lem}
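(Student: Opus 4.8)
The plan is to test the $\mathbb Q$-Cartier divisor $K_X+\Delta+tL$ against the generators of the Mori cone $\NE(X)$ furnished by the cone theorem for log canonical pairs. First I would record that $L=\mu^*L^*$ is nef, being the pullback of the strictly nef (hence nef) divisor $L^*$; consequently $L$ is nonnegative on every class of $\NE(X)$.

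The cone theorem for lc pairs (see \cite{fujino-foundations}) gives a decomposition
\[
\NE(X)=\NE(X)_{K_X+\Delta\geq 0}+\sum_i R_i,
\]
in which each $R_i=\mathbb R_{\geq 0}[C_i]$ is a $(K_X+\Delta)$-negative extremal ray spanned by a rational curve $C_i$ satisfying the length bound $0<-(K_X+\Delta)\cdot C_i\leq 2n$. Since $K_X+\Delta$ is nonnegative on $\NE(X)_{K_X+\Delta\geq 0}$ and $L$ is nef, the divisor $K_X+\Delta+tL$ is automatically nonnegative on that subcone for $t\geq 0$; hence it suffices to verify $(K_X+\Delta+tL)\cdot C_i\geq 0$ for every $i$.

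Fix such a $C_i$. The key observation is that $C_i$ is not contracted by $\mu$: otherwise the $\mu$-nefness of $K_X+\Delta$ would force $(K_X+\Delta)\cdot C_i\geq 0$, contradicting the negativity on the extremal ray. Therefore $\mu_*C_i$ is a nonzero effective $1$-cycle on $X^*$, and the projection formula together with the strict nefness of $L^*$ yields $L\cdot C_i=L^*\cdot\mu_*C_i>0$. As $L$ is a Cartier divisor, $L\cdot C_i$ is a positive integer, so $L\cdot C_i\geq 1$. Combining this with the length bound gives
\[
(K_X+\Delta+tL)\cdot C_i=(K_X+\Delta)\cdot C_i+t\,(L\cdot C_i)\geq -2n+t\geq 0\qquad(t\geq 2n),
\]
which proves the first assertion. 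The ``in particular'' statement then follows by applying the general case with $\mu=\id_X$ and $L^*=L$: no curve is then $\mu$-contracted, so $K_X+\Delta$ is vacuously $\mu$-nef.

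The main obstacle is the availability of the cone theorem for \emph{log canonical} pairs with the explicit length bound $2n$ on the extremal rays; for klt pairs this is classical, but in the lc generality it rests on Fujino's theory of quasi-log structures. The remaining steps are elementary, yet they are precisely where the hypotheses are used: the $\mu$-nef assumption is what removes the $\mu$-contracted extremal rays, and the Cartier (integrality) hypothesis on $L$ is what upgrades $L\cdot C_i>0$ to $L\cdot C_i\geq 1$, matching the constant $2n$ to the threshold $t\geq 2n$.
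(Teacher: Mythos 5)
Your proposal is correct and follows essentially the same route as the paper: both invoke the cone theorem for log canonical pairs with the length bound $2n$, observe that the $\mu$-nefness of $K_X+\Delta$ prevents the extremal rational curves from being $\mu$-contracted, and use the Cartier hypothesis to upgrade $L\cdot C_i>0$ to $L\cdot C_i\geq 1$. The only difference is cosmetic (you phrase the cone theorem as a decomposition of $\NE(X)$ rather than of an individual curve class), so there is nothing to add.
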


\begin{proof}
The proof follows from the cone theorem for log canonical pairs (see \cite{fujino}*{Theorem 1.4}), 
which states that  any curve $C$ on $X$ is numerically equivalent to a linear combination
\[
r_1F_1+\cdots+r_kF_k+M,
\]
where $r_i$ is a positive real number, 
$F_i$ is a rational curve on $X$
with $0<-(K_{X}+\Delta)\cdot F_i\leq 2n$, 
and $M$ is the limit of effective real $1$-cycles with $(K_{X}+\Delta)\cdot M\geq 0$.
Let $L^*$ be a strictly nef $\mathbb Q$-Cartier divisor on $X^*$
such that $L=\mu^*L^*$.
Since $K_{X}+\Delta$ is $\mu$-nef, 
the rational curve $F_i$ is not contracted by $\mu$ for any $i$.  
In particular, we can see that 
\[
L\cdot F_i\geq L^*\cdot \mu(F_i)>0,
\]
which implies that $L\cdot F_i\geq 1$ since $L$ is Cartier.
Then, from $-(K_{X}+\Delta)\cdot F_i\leq 2n$, 
we obtain $(K_{X}+\Delta+2nL)\cdot F_i\geq 0$ for any $i$. 
Then, since $L$ is nef, we can see that 
\[
(K_{X}+\Delta+tL)\cdot C\geq (K_{X}+\Delta+2nL)\cdot C\geq (K_{X}+\Delta+2nL)\cdot M\geq 0
\] 
for $t\geq 2n$.
\end{proof}

Under the same situation as in Lemma \ref{nef-lem}, 
it is conjectured that a $(K_{X}+\Delta)$-negative extremal ray $F$ satisfies 
that $0<-(K_{X}+\Delta)\cdot F\leq n+1$ (see \cite{fujino-foundations}*{Remark 4.6.3}), 
which predicts the sharper bound $t\geq n+1$  in Lemma \ref{nef-lem}. 
Based on this observation, 
a more precise statement than that of Conjecture \ref{serrano.log.version} 
was formulated in \cite{han-liu}*{Question 3.5}. 
However, this sharper  bound  seems far from reaching,
so  Conjecture \ref{serrano.log.version} is more workable in practice.
The sharper bound in Lemma \ref{nef-lem} can be obtained 
when $X$ is smooth and $\Delta=0$.

\begin{lem}\label{lem.sm.nef}
Let $X$ be a projective manifold of dimension $n$ and 
$L$ be an almost strictly nef Cartier divisor with respect to $\mu$.
Assume that $K_X$ is $\mu$-nef. 
Then, the Cartier divisor $K_{X}+tL$ is nef
for $t\geq n+1$.
\end{lem}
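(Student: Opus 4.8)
The plan is to repeat the argument of Lemma \ref{nef-lem} almost verbatim, with the single change of invoking Mori's cone theorem for smooth projective varieties in place of the cone theorem for log canonical pairs. The entire improvement from $t\geq 2n$ to $t\geq n+1$ comes from the sharper length bound available in the smooth case: every $K_X$-negative extremal ray is generated by a rational curve of anticanonical degree at most $n+1$, rather than $2n$.

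Concretely, I would first use Mori's cone theorem to write any curve $C\subset X$ as a numerical combination
\[
C\equiv r_1F_1+\cdots+r_kF_k+M,
\]
where each $r_i>0$, each $F_i$ is a rational curve with $0<-K_X\cdot F_i\leq n+1$, and $M$ is a limit of effective real $1$-cycles with $K_X\cdot M\geq 0$. Next, writing $L=\mu^*L^*$ for a strictly nef $\mathbb Q$-Cartier divisor $L^*$ on $X^*$, I would observe that the $\mu$-nefness of $K_X$ forces each $F_i$ to be non-contracted by $\mu$: otherwise $K_X\cdot F_i\geq 0$, contradicting $-K_X\cdot F_i>0$. Hence $\mu(F_i)$ is a curve, and the projection formula gives $L\cdot F_i\geq L^*\cdot \mu(F_i)>0$; since $L$ is Cartier, this yields the integrality bound $L\cdot F_i\geq 1$.

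Combining $-K_X\cdot F_i\leq n+1$ with $L\cdot F_i\geq 1$ gives $(K_X+(n+1)L)\cdot F_i\geq 0$ for every $i$. Because $L$ is nef, for $t\geq n+1$ we have $(K_X+tL)\cdot F_i\geq (K_X+(n+1)L)\cdot F_i\geq 0$, while $(K_X+tL)\cdot M=K_X\cdot M+t\,L\cdot M\geq 0$. Summing with the positive coefficients $r_i$ then shows $(K_X+tL)\cdot C\geq 0$ for an arbitrary curve $C$, which is the desired nefness. I do not expect a serious obstacle here: the only genuine input is Mori's length bound $n+1$, and the non-contraction step is the exact analogue of the one already used in Lemma \ref{nef-lem}. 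The point worth double-checking is simply that the smoothness hypothesis together with $\Delta=0$ is precisely what licenses replacing the log canonical length bound $2n$ by $n+1$.
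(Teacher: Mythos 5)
Your argument is correct and is exactly what the paper intends: its one-line proof simply invokes the smooth cone theorem's length bound $0<-K_X\cdot F_i\leq n+1$ and implicitly reruns the proof of Lemma \ref{nef-lem}, which is precisely what you have written out in full. No gap.
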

\begin{proof}
By the smooth version of the cone theorem, 
any curve  $F_i$ representing the $K_X$-negative extremal ray
satisfies that $0<-K_{X}\cdot F_i\leq n+1$, 
which finishes the proof. 
\end{proof}

For almost strictly nef  divisors, we have the following conjecture
(see \cite{ccp}*{Conjecture 2.2}), which is closely related to
the log version of Serrano's conjecture:
\begin{conj}\label{big-conj}
Let $(X,\Delta)$ be a log canonical pair of dimension $n$ and $L$ be an almost strictly nef Cartier divisor on $X$.
Then, the $\mathbb Q$-Cartier divisor  $K_X+\Delta+tL$ is big for $t>2n$. 
\end{conj}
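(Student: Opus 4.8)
The plan is to follow the strategy of \cite{ccp} and reduce the bigness of $D_t := K_X+\Delta+tL$ to a positivity statement, then argue by induction on $n$. After running a $(K_X+\Delta)$-minimal model program over $X^*$ to arrange that $K_X+\Delta$ is $\mu$-nef, Lemma \ref{nef-lem} already yields that $D_t$ is nef for $t\geq 2n$. Since a nef divisor on an $n$-dimensional projective variety is big precisely when its top self-intersection number is positive, the goal reduces to showing $D_t^{\,n}>0$ for $t>2n$.

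First I would dispose of the case in which $L$ is big. Expanding $D_t^{\,n}$ as a polynomial in $t$, its leading coefficient is $L^{n}$; if $L^{n}>0$ then $D_t$ is big for all $t\gg 1$, and a direct check shows $t>2n$ suffices. The substantial case is therefore $\nu(L)<n$, where $L^{n}=0$ and the positivity of $D_t^{\,n}$ is governed by the lower intersection numbers $L^{j}\cdot(K_X+\Delta)^{n-j}$.

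For this case I would pass to the null locus of the nef divisor $D_t$, namely the union of the positive-dimensional subvarieties $Z$ with $(D_t|_Z)^{\dim Z}=0$. Choosing such a $Z$ not contained in the $\mu$-exceptional locus, Proposition \ref{prop.asn}\,(3) guarantees that $L|_Z$ is again almost strictly nef on $Z$. By (sub-)adjunction, $(K_X+\Delta)|_Z$ is controlled by $K_Z+\Delta_Z$ for a log canonical pair $(Z,\Delta_Z)$ of dimension $<n$, so the inductive hypothesis applied to $(Z,\Delta_Z)$ and $L|_Z$ forces $K_Z+\Delta_Z+tL|_Z$ to be big for $t>2\dim Z$. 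This contradicts the choice of $Z$ inside the null locus, and eliminating all such $Z$ yields $D_t^{\,n}>0$.

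The hard part is twofold. First, arranging $K_X+\Delta$ to be $\mu$-nef in the log canonical category requires a minimal model program whose termination is not known in general; second, the (sub-)adjunction step may produce a boundary $\Delta_Z$ whose coefficients spoil the inductive hypothesis, and one must simultaneously ensure that the chosen $Z$ avoids the $\mu$-exceptional locus so that the almost strict nefness of $L|_Z$ is preserved. Controlling these two issues uniformly is exactly where the strict nefness of $L^{*}$ must be exploited, through the extremal-length estimate underlying Lemma \ref{nef-lem} and the restriction property of Proposition \ref{prop.asn}, and it is the reason Conjecture \ref{big-conj} remains open in general.
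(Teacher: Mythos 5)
The statement you were asked to prove is labelled as a \emph{conjecture} in the paper (it is \cite{ccp}*{Conjecture 2.2}), and the paper does not prove it: only the two-dimensional case is established, as Theorem \ref{surf-big}, by a surface-specific argument (pass to the relative minimal resolution $R$, deduce $K_R^2=K_R\cdot\rho^*L^*=(\rho^*L^*)^2=0$ from non-bigness, then rule out every case using Riemann--Roch, the classification of surfaces with $\kappa(R)\in\{-\infty,0\}$, the presence of a $(-2)$-curve, and Serrano's theorem for surfaces). You are right to conclude at the end that the conjecture remains open, but what you have written is a strategy outline, not a proof, and it should be presented as such.

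The central gap in your inductive scheme is that it cannot get started. If $D_t=K_X+\Delta+tL$ is nef but not big, then $D_t^{\,n}=0$ and the ``null locus'' you invoke is all of $X$; there is no canonical proper subvariety $Z$ on which to restrict and apply the inductive hypothesis. Producing such a $Z$ is exactly the role played by the effectivity hypothesis in the paper: in Theorem \ref{thm.abundance} the assumption $\kappa(L)\geq 0$ supplies an effective divisor $E\sim mL$ whose components are the subvarieties one restricts to, and in Theorem \ref{thm.abundance.eff} the prime divisor $V$ plays that role. Without some effectivity or non-vanishing input there is no candidate $Z$, which is precisely why the full conjecture (and Serrano's conjecture itself) is open. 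Secondary issues you already flag --- termination of the relative log MMP, and the fact that subadjunction produces a boundary $\Delta_Z$ and a possibly non-normal $Z$ requiring normalization and a relative canonical model, as in the paper's equation \eqref{eq.equal} --- are real but subordinate to this one. Note also that even in dimension two the paper does not argue by restriction to curves; it needs global classification results, which suggests the inductive approach alone is insufficient even where the statement is known.
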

\subsection{Hirzebruch--Riemann--Roch formula}
Let $L$ be a Cartier divisor on a K-trivial fourfold $X$. 
Recall that our definition of K-trivial manifolds requires $h^1(X,\mathcal O_X)=0$. 
Then, since we have 
$$h^3(X, \mathcal O_X)=h^1(X,\mathcal O_X(K_X))=h^1(X,\mathcal O_X)=0,$$ 
we obtain 
\begin{equation}\label{eq.hrr.1}
  \begin{split}
&\chi(X, mL)=\frac{L^4}{24}m^4+\frac{c_2(X) \cdot L^2}{24}m^2+\frac{3c^2_2(X)-c_4(X)}{720},   \\
&\chi(\mathcal O_X)=\frac{3c^2_2(X)-c_4(X)}{720}
=\sum^4_{i=0} (-1)^ih^i(\mathcal O_X)=2+h^2(\mathcal O_X)\geq 2
  \end{split}
\end{equation}
from  the Hirzebruch--Riemann--Roch formula. 
It follows that $c_2(X)$ is pseudoeffective from Miyaoka's result \cite{miyaoka}*{Theorem 6.6}; 
hence we have $c_2(X) \cdot L^2\geq 0$  when $L$ is nef.
Furthermore, the term of degree $0$ in the expansion of $\chi(X, mL)$ is equals to $\chi(\mathcal O_X) (\geq 2)$. 
These observations yield the following lemma. 
\begin{lem}\label{lem.fc}
Let $L$ be a nef  Cartier divisor 
on a K-trivial fourfold $X$. 
Then, the inequality $\chi(X, mL)\geq 2$ holds for all $m\geq 0$.
\end{lem}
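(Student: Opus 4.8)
The plan is to read off the coefficients of $\chi(X,mL)$ from the Hirzebruch--Riemann--Roch expansion \eqref{eq.hrr.1} and check that each nonconstant coefficient is nonnegative while the constant term is at least $2$; then nonnegativity of all coefficients forces $\chi(X,mL)\geq \chi(\mathcal O_X)\geq 2$ for every $m\geq 0$. Concretely, the expansion has only three surviving terms, namely the degree-four term $\tfrac{L^4}{24}m^4$, the degree-two term $\tfrac{c_2(X)\cdot L^2}{24}m^2$, and the constant term $\chi(\mathcal O_X)=\tfrac{3c_2^2(X)-c_4(X)}{720}$; the odd-degree terms have already vanished because of the hypotheses $h^1(X,\mathcal O_X)=0$ and $K_X\sim 0$, which were used in the excerpt to kill $h^1$ and $h^3$.

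First I would verify the two sign statements recorded just above the lemma. The constant term equals $\chi(\mathcal O_X)=2+h^2(X,\mathcal O_X)\geq 2$, which is already given. For the $m^4$ coefficient, since $L$ is nef its top self-intersection $L^4$ is nonnegative, so $\tfrac{L^4}{24}m^4\geq 0$ for all $m$. For the $m^2$ coefficient, I would invoke the pseudoeffectivity of $c_2(X)$ from Miyaoka's theorem \cite{miyaoka}*{Theorem 6.6}, as cited in the excerpt, together with nefness of $L$: the product of a pseudoeffective class with the nef class $L^2$ is nonnegative, so $c_2(X)\cdot L^2\geq 0$ and hence $\tfrac{c_2(X)\cdot L^2}{24}m^2\geq 0$.

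Having established that every coefficient of the polynomial $\chi(X,mL)$ in $m$ is nonnegative and that the constant coefficient is $\geq 2$, the conclusion is immediate: for any integer $m\geq 0$ each monomial contributes a nonnegative amount, so
\[
\chi(X,mL)=\frac{L^4}{24}m^4+\frac{c_2(X)\cdot L^2}{24}m^2+\chi(\mathcal O_X)\geq \chi(\mathcal O_X)\geq 2.
\]

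The conceptual work is entirely front-loaded into the two positivity inputs, so there is no real obstacle left once those are in hand; the only point requiring genuine external input is the pseudoeffectivity of $c_2(X)$, and the only point requiring care is that the nonnegativity argument uses $m\geq 0$ (for negative $m$ the quartic and quadratic terms are still nonnegative by evenness, but the lemma is only asserted and only needed for $m\geq 0$). I would therefore present the proof as a short three-line assembly of the displayed formula \eqref{eq.hrr.1} with the two coefficient estimates, rather than as a computation.
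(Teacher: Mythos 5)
Your proposal is correct and is exactly the paper's argument: the authors derive the lemma directly from the displayed expansion \eqref{eq.hrr.1} together with the two observations that $c_2(X)\cdot L^2\geq 0$ (Miyaoka's pseudoeffectivity of $c_2(X)$ paired with nefness of $L$) and that the constant term equals $\chi(\mathcal O_X)=2+h^2(\mathcal O_X)\geq 2$, with $L^4\geq 0$ coming from nefness. No comparison beyond that is needed.
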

\subsection{Relative terminal and canonical models}
Let $X$ be a normal variety of dimension $n$ and $g\colon Z\to X$ be a resolution of singularities of $X$.
By applying the result in \cite{bchm}, we can run a relative $K_Z$-MMP over $X$; 
then, we obtain a birational morphism $f \colon Y\to X$ 
such that $K_{Y}$ is $f$-nef and that $Y$ is a $\mathbb Q$-factorial terminal variety. 
Replacing $Y$ by its relative canonical model $Y^c$,
we further obtain $f^c \colon Y^c\to X$ such that
$K_{Y^c}$ is $f^c$-ample. 
The morphism $f\colon Y\to X$ (resp. $f^c \colon Y^c\to X$)
is called a {\em{relative terminal model}} (resp. {\em{relative canonical model}}) of $X$.
In dimension two, the relative terminal model is the relative minimal resolution; 
in particular, it is uniquely determined by $X$.
In the higher dimensional case, 
the relative terminal and canonical models depend on the initial choice of a resolution 
$g\colon Z\to X$.

\subsection{Prime Calabi--Yau divisors}
Following the inductive strategy of \cites{ccp, serrano}, 
we consider Conjecture \ref{serrano.log.version} 
(i.e., the log version of Serrano's conjecture) 
for some divisor of a K-trivial fourfold $X$.
As stated in the introduction, however,
the remaining Calabi--Yau threefolds of Serrano's conjecture 
are far from reaching.
Therefore, in considering  Conjecture \ref{serrano.log.version} for K-trivial fourfolds, 
we need a careful discussion for special prime divisors of Calabi--Yau type. 
In this paper, 
a prime divisor $D$ on a normal variety $X$ is said to be a \emph{prime Calabi--Yau divisor}  if
$D$ is a normal variety with at worst canonical singularities 
satisfying that $K_D\sim 0$ and  $H^1(D, \mathcal{O}_D)=0$. 
The following proposition shows that 
a prime Calabi--Yau divisor $D$ on a  K-trivial manifold $X$ 
induces a fibration $\pi\colon X\to C$ onto a smooth curve $C$.

\begin{prop}\label{prop.cy}
Let $X$ be a K-trivial manifold. 
Then, any prime Calabi--Yau divisor $D$ on $X$ is a semiample divisor with $\nu(D)=\kappa(D)=1$.
\end{prop}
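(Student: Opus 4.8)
The plan is to show first that the normal bundle of $D$ is trivial, and then to produce a base-point-free pencil directly from the defining section of $D$ together with the vanishing of the irregularity of $X$.

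First I would exploit adjunction. Since $X$ is smooth, $D$ is a Cartier prime divisor, and $D$ is normal; adjunction gives $\omega_D\cong(\omega_X\otimes\mathcal{O}_X(D))|_D$. Using $K_X\sim 0$ together with the hypothesis $K_D\sim 0$, this yields $\mathcal{O}_D(D)\cong\mathcal{O}_D$, i.e. the normal bundle of $D$ in $X$ is trivial. Two consequences follow at once. For nefness, any curve $C\subset D$ satisfies $D\cdot C=\deg(\mathcal{O}_D(D)|_C)=0$, while any curve $C\not\subset D$ satisfies $D\cdot C\geq 0$ because $D$ is effective; hence $D$ is nef. For the numerical dimension, fixing an ample divisor $H$ we obtain $D^2\cdot H^{\dim X-2}=\int_D c_1(\mathcal{O}_D(D))\cdot(H|_D)^{\dim X-2}=0$, whereas $D\cdot H^{\dim X-1}>0$ since $D$ is a nonzero effective divisor; therefore $\nu(D)=1$.

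Next I would build the pencil. Let $s_D\in H^0(X,\mathcal{O}_X(D))$ be the canonical section with $\mathrm{div}(s_D)=D$. From the exact sequence
\[
0\to\mathcal{O}_X\to\mathcal{O}_X(D)\to\mathcal{O}_D(D)\to 0
\]
and the identification $\mathcal{O}_D(D)\cong\mathcal{O}_D$, the long exact sequence in cohomology together with $h^1(X,\mathcal{O}_X)=0$ shows that the restriction map $H^0(X,\mathcal{O}_X(D))\to H^0(D,\mathcal{O}_D)=\mathbb{C}$ is surjective. Hence there is a section $t\in H^0(X,\mathcal{O}_X(D))$ whose restriction $t|_D$ is a nonzero constant, so $t$ vanishes nowhere on $D$ and $h^0(X,\mathcal{O}_X(D))=2$.

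Finally I would conclude. The pencil spanned by $s_D$ and $t$ is base-point free: a common zero of $s_D$ and $t$ must lie on $D=\{s_D=0\}$, but there $t\neq 0$, so $\mathrm{Bs}\,|D|=\emptyset$. The resulting morphism $\phi\colon X\to\mathbb{P}^1$ is nonconstant and realizes $D$ as the pullback of a point; thus $D$ is semiample with $\kappa(D)=\dim\phi(X)=1$, matching $\nu(D)=1$. The only point requiring care, and the step I expect to be the main obstacle in a fully rigorous write-up, is the adjunction computation for the possibly singular divisor $D$: one must confirm that $D$, being a normal Cartier divisor with canonical singularities on the smooth variety $X$, is Gorenstein, so that $\omega_D$ is an honest line bundle and the linear equivalence $\mathcal{O}_D(D)\cong\mathcal{O}_D$ (rather than a mere numerical or $\mathbb{Q}$-linear statement) is legitimate; everything else is then formal.
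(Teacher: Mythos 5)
Your proof is correct, and up to the last step it is the same as the paper's: both arguments establish nefness and $\nu(D)=1$ from the adjunction isomorphism $\mathcal{O}_D(D)\cong\omega_X(D)|_D\cong\omega_D\cong\mathcal{O}_D$ (the paper phrases this as $D|_D=K_D$ and computes intersection numbers), and both compute $h^0(X,\mathcal{O}_X(D))=2$ from the ideal-sheaf sequence together with $h^1(X,\mathcal{O}_X)=0$. The point of divergence is the final deduction of semiampleness. The paper stops at $1=\nu(D)\geq\kappa(D)\geq 1$ and then invokes Kawamata's basepoint-free theorem (nef with $\nu=\kappa$ on a K-trivial variety implies semiample), whereas you observe directly that the section $t$ restricting to a nonzero constant on $D$ cannot vanish on $D$, so $\mathrm{Bs}\,|D|\subseteq D\cap\{t=0\}=\emptyset$. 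Your route is more elementary and yields the slightly stronger conclusion that $|D|$ itself is free (not merely $|mD|$ for some $m\gg 0$), producing the fibration $X\to\mathbb{P}^1$ on the nose; the paper's route is shorter to state but leans on a substantial theorem. Your closing worry about adjunction is handled exactly as you suspect: a Cartier divisor in a smooth variety is automatically Gorenstein, so $\omega_D$ is a line bundle, and on the normal variety $D$ it agrees with the reflexive sheaf $\mathcal{O}_D(K_D)$, making the isomorphism $\mathcal{O}_D(D)\cong\mathcal{O}_D$ an honest linear equivalence. (The paper even remarks that canonical singularities and $H^1(D,\mathcal{O}_D)=0$ are not needed here, which is consistent with your argument.)
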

\begin{proof}

Let $C$ be a curve on $X$. If $C\not\subset D$, then we have $D\cdot C\geq 0$. 
If $C\subset D$, then we have $D\cdot C=D|_D\cdot C=K_D\cdot C=0$ from $K_D\sim 0$.
Hence, we can see that $D$ is a nef divisor on $X$. 
Let $H_1, H_2, \cdots , H_{n-2}$  
be very ample and general divisors on $X$, where $n:=\dim X$. 
Then, we have  
\[
D^2\cdot H_1\cdot H_2\cdots H_{n-2}=K_D\cdot H_1|_D\cdot H_2|_D \cdots H_{n-2}|_D = 0, 
\]
and so  $D^2 $ is numerically trivial on $X$, 
which indicates that $\nu(D)=1$ by definition. 

We now  consider the long exact sequence induced by the following short exact sequence
\[
0\to  \mathcal O_X \to  
\mathcal O_X(D)\to \strutt_{D}(D)\simeq \strutt_{D}\to 0.
\]
Then, we obtain that 
$$h^0(X, D)=h^0(D,\strutt_{D})+h^0(X,\strutt_{X})=2$$ 
from $H^1(X, \mathcal O_X)=0$,  
which indicates that 
$h^0(X, kD)\geq h^0(X, D)=2$; 
in particular, we have $\kappa (D)\geq 1$.
Hence, we obtain $1=\nu(D)\geq \kappa (D)\geq 1$. 
Then, we can conclude that $D$ is semiample
from Kawamata's basepoint-free theorem (see \cites{fujino-ok, kawamata}).  
\end{proof}

Note that, in the above proof, 
we have not used the fact that $D$ has at worst canonical singularities 
and  $H^1(D, \mathcal{O}_D)=0$. 
Proposition \ref{prop.cy} shows that 
a prime Calabi--Yau divisor $D$ can be embedded into a fibration $\pi\colon X\to C$ as a fiber, 
which allows us to consider Serrano's conjecture for a family of Calabi--Yau varieties rather than for
the single Calabi--Yau  variety $D$. 
This observation will be discussed in detail in Corollary \ref{cor.abundance.eff}.

\section{Abundance for strictly nef divisors}\label{sec3}

In this section,  following the inductive strategy of \cite{serrano}, 
we prove that strictly nef divisors on K-trivial fourfolds are ample 
under the assumption of $\mathbb Q$-effectivity. 

For this purpose, 
we first solve Conjecture \ref{big-conj} in the two-dimensional case.
For a log canonical pair $(S,\Delta)$ of dimension two,  
Conjecture \ref{big-conj}  has been proved 
in the case where $S$ is smooth, $\Delta=0$, and $\kappa(S)=0$ 
(see \cite{ccp}*{Proposition 2.5}) and 
in the case where $\kappa(S)\geq 1$ (see \cite{ccp}*{Remark 2.6}). 
Our result generalizes the above results in full generality.

\begin{thm}\label{surf-big}
Let $(S,\Delta)$ be a log canonical pair of dimension two
and $L$ be an almost strictly nef Cartier divisor on $S$.
Then, the $\mathbb Q$-Cartier divisor $K_S+\Delta+tL$ is big for $t>3$.
\end{thm}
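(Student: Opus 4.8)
The plan is to reduce to a smooth surface with zero boundary, run a Hodge-index computation, and isolate a single degenerate configuration as the crux.

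First I would pass to the minimal resolution $\pi\colon \tilde S\to S$. Since the minimal resolution has nonpositive discrepancies and $(S,\Delta)$ is log canonical, one can write $\pi^*(K_S+\Delta)=K_{\tilde S}+B$ with $B\geq 0$ \emph{effective} (of coefficients $\le 1$). By Proposition \ref{prop.asn}(2) the pullback $\tilde L:=\pi^*L$ is almost strictly nef, and bigness is a pullback invariant; moreover $K_{\tilde S}+B+t\tilde L\geq K_{\tilde S}+t\tilde L$ with $B$ effective. Hence it suffices to prove that $K_{\tilde S}+t\tilde L$ is big for $t>3$. Running a relative $K_{\tilde S}$-MMP over $S^*$ (via the composite $\mu\circ\pi$), every contracted curve is $\mu$-exceptional, hence $\tilde L$-trivial, so the $K$-negative contractions only increase the volume and preserve smoothness; thus I may assume $X:=\tilde S$ is a smooth surface with $K_X$ $\mu$-nef and $L=\mu^*L^*$ almost strictly nef.

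On such $X$, Lemma \ref{lem.sm.nef} with $n=2$ shows that $D_t:=K_X+tL$ is nef for $t\geq 3$; in particular $D_3$ is nef, so $D_3^2\geq 0$ and $D_3\cdot L\geq 0$. I would then analyze the quadratic $\phi(t):=D_t^2=L^2t^2+2(K_X\cdot L)t+K_X^2$, whose derivative $\phi'(t)=2D_t\cdot L=2\bigl(D_3\cdot L+(t-3)L^2\bigr)$ is nonnegative for $t\geq 3$ since $L$ is nef. When $L^2>0$ the vertex of $\phi$ lies at $-(K_X\cdot L)/L^2\leq 3$ (equivalent to $D_3\cdot L\geq 0$), so $\phi(t)>0$ for $t>3$; when $L^2=0$ and $K_X\cdot L>0$ the linear function $\phi$ is increasing with $\phi(3)\geq 0$, again giving $\phi(t)>0$. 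For a nef divisor on a surface, $\phi(t)>0$ is exactly bigness, so $D_t$ is big for $t>3$ in all these cases. The argument collapses only in the degenerate case $L^2=K_X\cdot L=K_X^2=0$, which by the Hodge index theorem forces $K_X\equiv cL$ for some $c\in\mathbb Q$.

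The main obstacle is to exclude this proportional case. Since $L=\mu^*L^*$ with $L^*$ strictly nef and $K_X$ is $\mu$-nef, the relation forces $K_X\cdot E=c\,L\cdot E=0$ on every $\mu$-exceptional curve $E$, so $\mu$ is crepant and the numerical relation descends to $K_{S^*}\equiv cL^*$; passing to the minimal resolution $\nu\colon W\to S^*$ gives a smooth surface with $K_W\equiv cL_W$, where $L_W:=\nu^*L^*$ is nef with $L_W^2=0$ and $L_W\cdot C>0$ for every curve $C$ not contracted by $\nu$. Now the sign of $c$ is the enemy. If $c>0$ then $K_W$ is nef with $K_W^2=0$, so by two-dimensional abundance $K_W$ is semiample and induces a fibration whose general fiber $F$ (not $\nu$-contracted) satisfies $L_W\cdot F=c^{-1}K_W\cdot F=0$, contradicting strict nefness of $L^*$; the case $c=0$ gives $K_W\equiv 0$, whence the square-zero nef class $L_W$ is again semiample and contracts a non-$\nu$-exceptional curve, the same contradiction; and if $c<0$ then $-K_{S^*}$ is strictly nef, so the surface case of Serrano's conjecture for the anticanonical class makes $S^*$ del Pezzo, giving $(-K_{S^*})^2>0$ against $(-K_{S^*})^2=c^2(L^*)^2=0$. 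Excluding these degenerate surfaces, which is essentially a two-dimensional instance of the strict-nefness philosophy combined with surface abundance, is the delicate heart of the proof, while the reduction steps and the Hodge-index estimate are comparatively routine.
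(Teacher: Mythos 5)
Your reduction steps (passing to the minimal resolution, running the relative MMP over $S^*$ to reach the smooth surface $R$ with $K_R$ relatively nef, using Lemma \ref{lem.sm.nef} to get nefness for $t\ge 3$, and forcing $L^2=K\cdot L=K^2=0$ in the non-big case) coincide with the paper's, and your Hodge-index step yielding $K\equiv cL$ is correct since $L^*$ strictly nef implies $L\not\equiv 0$. From there you diverge: the paper derives $q(R)\ge 1$ from Riemann--Roch and Serre duality, produces a $(-2)$-curve from a non-ample nef class via Serrano's surface theorem, and eliminates the resulting surfaces by classification; you instead split on the sign of $c$. Your case $c>0$ is sound, but the other two cases rest on claims that fail as stated.

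In the case $c=0$ you assert that the nef class $L_W$ with $L_W^2=0$ on a surface with $K_W\equiv 0$ is semiample. This is false in general: on a product of elliptic curves $E\times E'$ the bundle $p_1^*M\otimes p_2^*P$, with $M$ of degree one and $P\in\Pic^0(E')$ non-torsion, is nef, not numerically trivial, has square zero, and has $\kappa=-\infty$. What you actually need is a non-$\rho$-exceptional curve on which $L_W$ vanishes; this is available for K3 and Enriques surfaces via Riemann--Roch effectivity and then semiampleness, but for abelian and bi-elliptic surfaces (where there are no $(-2)$-curves, so $\rho$ is an isomorphism and $L_W=L^*$ is genuinely strictly nef) you must instead invoke the ampleness of strictly nef divisors on abelian varieties and a finite-cover descent --- a different ingredient that your write-up does not supply. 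In the case $c<0$ you apply ``strictly nef anticanonical implies del Pezzo'' to $S^*$, but $S^*$ is a normal surface with Du Val singularities, while the theorem you are invoking (Maeda, or Serrano's surface theorem applied to $L=-K$) is proved for smooth surfaces; if you intend the log version (Corollary \ref{surf.ample}), that is deduced from the very statement you are proving, so the argument would be circular. On the smooth model $W$ the divisor $-K_W$ is only nef, vanishing on every $\rho$-exceptional $(-2)$-curve, so the smooth theorem does not apply there either unless $\rho$ is an isomorphism. Closing this case requires a further division: $K_W$ is not pseudoeffective and $K_W^2=0$, so $W$ is either a rational surface with $K_W^2=0$ (where $h^0(-K_W)\ge\chi(-K_W)=1$ gives an effective anticanonical divisor $D$ with $D\cdot L_W=0$ that cannot be entirely $\rho$-exceptional by Grauert's criterion) or a minimal ruled surface over an elliptic curve (which has no $(-2)$-curves, so $\rho$ is an isomorphism and the smooth-surface theorem finally applies). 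Until these two cases are repaired, the proof is incomplete; the paper's route via $q(R)\ge 1$ and the existence of a $(-2)$-curve sidesteps both difficulties.
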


\begin{proof}
Let $\pi\colon \bar{S}\to S$ be the relative minimal resolution, 
and $\mu\colon S\to S^*$ be a surjective birational morphism
with a strictly nef $\mathbb Q$-Cartier divisor $L^*$ on $S^*$ such that 
$L=\mu^*L^*$. 
By Remark \ref{rem.asn}, 
we may assume that $S^*$ is normal and  $\mu$ has connected fibers.
Then, running a relative $K_{\bar S}$-MMP over $S^*$,
we obtain
\[
\xymatrix{\bar S\ar[r]^{\tau}& R\ar[r]^\rho & S^*
},
\] 
where $R$ is the relative minimal resolution of $S^*$. 
In particular, the canonical divisor $K_R$ is $\rho$-nef.
We have $K_{\bar S}+t\pi^*L\leq \pi^*(K_S+\Delta+tL)$ since $\pi\colon \bar{S}\to S$ is a relative minimal resolution. 
Hence, it is sufficient to show that $K_{\bar S}+t\pi^*L=K_{\bar S}+t\pi^*\mu^*L^*$ is big for $t>3$.
This is equivalent to showing that $K_R+t\rho^*L^*$ is big for $t>3$ 
since $\pi^*\mu^*L^*=\tau^*\rho^*L^*$ is $\tau$-trivial. 
Note that $K_R+t\rho^*L^*$ is nef for $t\geq 3$ by Lemma \ref{lem.sm.nef}.

For a contradiction, we assume that $K_R+t\rho^*L^*$ is not big for some $t>3$. 
Then, since we have 
\begin{align*}
0&=(K_R+t\rho^*L^*)^2 \\
&=(K_R+3\rho^*L^*+(t-3)\rho^*L^*)^2 \\
&=(K_R+3\rho^*L^*)^2+2(t-3)(K_R+3\rho^*L^*) \cdot \rho^*L^*+(t-3)^2(\rho^*L^*)^2, 
\end{align*}
we can obtain that  
$(K_R+3\rho^*L^*)^2=(K_R+3\rho^*L^*) \cdot \rho^*L^*=(\rho^*L^*)^2=0$. 
This indicates that 
\begin{equation}\label{eq3.1}
K_R^2=K_R\cdot \rho^*L^*=(\rho^*L^*)^2=0.
\end{equation}

We first consider the complete linear system $|rK_R|$ for a positive integer $r$. 
If there exists a nonzero divisor $D \in |rK_R|$ for some $r>0$, 
then $D^2=r^2 K_R^2=0$ from \eqref{eq3.1}, 
and thus 
$D$ is not contained in the $\rho$-exceptional locus 
from Grauert's criterion (see \cite{bpv}*{\S 3, Theorem 2.1}). 
This contradicts  the strict nefness of  $L^*$. 
Indeed, we have 
$$
0< \rho(D) \cdot  L^*=D \cdot \rho^*L^* =rK_R\cdot \rho^*L^*=0
$$
by \eqref{eq3.1}.  
Therefore, we can conclude that 
$|rK_R|= \emptyset$ for any $r>0$ or $rK_R \sim 0$ for some $r>0$.

By the same argument applied to $|r\rho^*L^*|$, we can conclude that 
$|r\rho^*L^*|= \emptyset$ for any $r>0$ or $r\rho^*L^* \sim 0$ for some $r>0$. 
The latter contradicts the strict nefness of $L^*$; 
hence $|r\rho^*L^*|= \emptyset$ holds for any $r>0$. 
By the same argument applied to $|K_R-r\rho^*L^*|$, 
we can conclude that $|K_R-r\rho^*L^*|= \emptyset$ for any $r>0$ or 
$K_R-r\rho^*L^* \sim 0$ for some $r>0$. 
Note that $|K_R-(r+m)\rho^*L^*| =|-m\rho^*L^*| = \emptyset$ holds for any $m>0$ in the latter case. 
Then, by using the Serre duality, the Hirzebruch--Riemann--Roch formula,  and \eqref{eq3.1}, we can show that 
\[
0\geq -h^{1}(R, r\rho^*L^*)=\chi(R, r\rho^*L^*)=\chi(\strutt_R)
\]
for $r\gg 1$, which implies that $q(R)\geq 1$.

If $K_{R}+t\rho^*L^*$ is strictly nef, 
the solution of Serrano's conjecture for surfaces \cite{serrano}
shows that 
\[
(k+1)K_R+tk\rho^*L^*=K_R+k(K_{R}+t\rho^*L^*)
\] 
is ample for $k>3$. 
This is a contradiction to \eqref{eq3.1}. 
Hence, we may assume that there exists a curve $C $ on $R$ 
such that $(K_{R}+t\rho^*L^*)\cdot C=0$ for some $t>3$. 
Then, we obtain that 
\[
0=(K_{R}+t\rho^*L^*)\cdot C = (K_{R}+3\rho^*L^*)\cdot C + (t-3)\rho^*L^*\cdot C
\geq (t-3)\rho^*L^*\cdot C\geq 0,
\]
since $K_R+3\rho^*L^*$ is nef by Lemma \ref{lem.sm.nef} and 
$\rho^*L^*$ is also nef by definition. 
This indicates that 
\begin{align*}
\rho^*L^*\cdot C=0 \text{ and } K_R\cdot C=(K_{R}+t\rho^*L^*)\cdot C=0. 
\end{align*}
Then, since $L^*$ is strictly nef, 
the curve $C$ is $\rho$-exceptional; 
hence we have $C^2<0$. 
Moreover, we can see that $C$ is a $(-2)$-curve from $K_R\cdot C=0$ and the adjunction formula.

We now derive a contradiction case by case.
Since $|rK_R|= \emptyset$ for any $r>0$ or $rK_R \sim 0$ for some $r>0$,
we have that either $\kappa(R)=-\infty$ or $\kappa(R)=0$ holds. 
Let us now consider the  case $\kappa(R)=-\infty$.  
Then, it follows from $K_R^2=0$ and $q(R)\geq 1$  that 
$R$ is a minimal geometrically ruled surface over a curve of genus $\geq 1$.
This surface contains no $(-2)$-curve.
We finally consider the case $\kappa(R)=0$. 
Then, it follows that $R$ is minimal from $K_R^2=0$, 
and thus $R$ is either bi-elliptic or abelian.
In particular, the surface $R$ contains no rational curves. 
\end{proof}

Conjecture \ref{serrano.log.version} in the two-dimensional case 
immediately follows from Theorem \ref{surf-big}. 

\begin{cor}\label{surf.ample}
Let $(S,\Delta)$ be a log canonical pair of dimension two
and $L$ be a strictly nef Cartier divisor on $S$.
Then, the $\mathbb Q$-Cartier divisor $ K_S+\Delta+tL$ is ample for $t>4$.
\end{cor}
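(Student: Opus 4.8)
The plan is to deduce Corollary \ref{surf.ample} from the bigness statement of Theorem \ref{surf-big} by an inductive argument on the structure of the non-ample locus, using strict nefness to rule out curves on which the divisor is trivial. First I would invoke Theorem \ref{surf-big}: since $L$ is in particular almost strictly nef (taking $\mu = \mathrm{id}$), the $\mathbb{Q}$-Cartier divisor $K_S + \Delta + tL$ is big for $t > 3$; and by Lemma \ref{nef-lem} it is nef for $t \geq 4$. So for $t > 4$ the divisor $K_S + \Delta + tL$ is both nef and big, and it remains to upgrade nef-and-big to ample via the Nakai--Moishezon or Kleiman criterion.

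The key step is to verify that $(K_S + \Delta + tL) \cdot C > 0$ for every irreducible curve $C \subset S$. The natural decomposition is to write, for $t > 4$,
\[
(K_S + \Delta + tL) \cdot C = (K_S + \Delta + 4L) \cdot C + (t-4)\, L \cdot C,
\]
where the first term is $\geq 0$ by the nefness from Lemma \ref{nef-lem} and the second is strictly positive by strict nefness of $L$, since $L \cdot C > 0$ for every curve. Hence the intersection with every curve is strictly positive, and combined with bigness (which gives positive self-intersection, i.e.\ $(K_S+\Delta+tL)^2 > 0$) the Nakai--Moishezon criterion yields ampleness. I expect the routine point to check is only that one may split off a strictly positive multiple of $L$ while keeping the remainder nef, which is immediate from the choice $t > 4 \geq 2n$ with $n = 2$.

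The main obstacle, and the reason the result is genuinely a corollary rather than a triviality, is that ampleness by Nakai--Moishezon also requires the positivity of the top self-intersection, which is exactly the bigness content of Theorem \ref{surf-big}; without that theorem the curve-wise positivity alone would not suffice, since a strictly nef divisor need not be ample. Thus the real work has already been done in establishing bigness for $t > 3$, and the strictly-nef hypothesis (stronger than almost strictly nef, with $\mu = \mathrm{id}$) is what guarantees the strict inequality $L \cdot C > 0$ on curves contracted by the nef part. I would therefore present the corollary as: apply Theorem \ref{surf-big} and Lemma \ref{nef-lem} to get nef and big for $t > 4$, then conclude ampleness from the Nakai--Moishezon criterion using the displayed decomposition to handle the curve intersections.
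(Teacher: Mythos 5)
Your proof is correct, and it reaches the conclusion by a different final step than the paper. Both arguments share the same core inputs: Theorem \ref{surf-big} supplies bigness, Lemma \ref{nef-lem} supplies nefness of $K_S+\Delta+4L$, and the decomposition $(K_S+\Delta+tL)\cdot C=(K_S+\Delta+4L)\cdot C+(t-4)L\cdot C$ gives strict positivity on every curve for $t>4$ --- the paper phrases this as ``$K_S+\Delta+tL$ is strictly nef for $t>4$''. Where you diverge is in upgrading to ampleness: you invoke the Nakai--Moishezon criterion directly, noting that on a projective surface the only subvarieties to test are $S$ itself (handled by $(K_S+\Delta+tL)^2>0$, which for a nef divisor is exactly bigness) and curves (handled by strict nefness); this is a complete and elementary argument. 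The paper instead applies Theorem \ref{surf-big} to $2(K_S+\Delta+tL)-(K_S+\Delta)=K_S+\Delta+2tL$ and then uses the basepoint-free theorem for log canonical pairs to conclude that $K_S+\Delta+tL$ is semiample, whence semiample plus strictly nef gives ample. Your route buys simplicity and avoids the lc basepoint-free theorem entirely, but it is special to dimension two, where Nakai--Moishezon has no intermediate-dimensional subvarieties to check; the paper's route is the one whose shape persists in higher dimensions, where nef, big, and strictly positive on curves does not by itself control $D^{\dim V}\cdot V$ for intermediate subvarieties $V$.
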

\begin{proof}
Since $L$ is strictly nef and 
$K_S+\Delta+tL$ is nef for $t\geq 4$ by Lemma \ref{nef-lem}, 
the $\mathbb Q$-Cartier divisor $K_S+\Delta+tL$ is strictly nef for $t>4$.
Theorem \ref{surf-big} indicates that 
$$
2(K_S+\Delta+tL)-(K_S+\Delta)=K_S+\Delta+2tL
$$ is nef and big for $t>4$.
Then, from the basepoint-free theorem for log canonical pairs, 
we can see that $K_S+\Delta+tL$ is semiample, 
and thus $K_S+\Delta+tL$ is ample for $t>4$.
\end{proof}

We are now ready to prove the main result of this section. 

\begin{thm}\label{thm.abundance}
Let $X$ be a K-trivial fourfold and $L$ be a strictly nef 
$\mathbb Q$-Cartier divisor on $X$.
If we have $\kappa(L)\geq 0$, then $L$ is ample.
\end{thm}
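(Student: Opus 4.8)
```latex
\begin{proof}[Proof plan for Theorem \ref{thm.abundance}]
The plan is to deduce ampleness of $L$ from the auxiliary Theorem \ref{thm.3}
(the partial log-Serrano statement for K-trivial manifolds of dimension
$n\leq 4$), applied to a suitably chosen \emph{nonzero} effective divisor
$\Delta$ together with the given strictly nef $L$. Since we are assuming
$\kappa(L)\geq 0$, i.e. $L$ is $\mathbb Q$-effective, there exists a positive
integer $m$ and an effective divisor $D\in |mL|$. First I would separate the
trivial case $D=0$: if some positive multiple $mL\sim 0$, then $L\cdot C=0$ for
every curve $C$, contradicting strict nefness (unless $X$ contains no curves,
which cannot happen on a projective fourfold of positive dimension). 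Hence we
may take $D$ to be a genuinely \emph{nonzero} effective divisor, which is
exactly the hypothesis that makes Theorem \ref{thm.3} applicable.

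Next I would set $\Delta:=D$ (or a small rational multiple, after clearing
denominators so that $\Delta$ becomes an honest Cartier divisor) and invoke
Theorem \ref{thm.3} with $n=4$. That theorem gives that the Cartier divisor
$\Delta+t'L$ is ample for all $t'\gg 1$. Fixing one such large $t'$, we have
$\Delta+t'L$ ample. The key algebraic observation is that $\Delta$ is itself a
multiple of $L$ in the Néron--Severi group: numerically $\Delta\equiv mL$.
Therefore $\Delta+t'L\equiv (m+t')L$, so the ample class is numerically
proportional to $L$. Since ampleness is a numerical property (by the
Nakai--Moishezon or Kleiman criterion it depends only on the numerical
equivalence class, and here $\Delta$ and $mL$ are actually $\mathbb Q$-linearly
equivalent because $D\in|mL|$), it follows that $(m+t')L$ is ample, and hence
$L$ itself is ample.

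The main obstacle I anticipate is ensuring the reduction to the
\emph{nonzero}-$\Delta$ hypothesis is airtight: one must guarantee that
$\kappa(L)\geq 0$ together with strict nefness really produces an effective
divisor in a nonzero numerical class. Strict nefness rules out $\kappa(L)=0$
with $L$ numerically trivial, but I would verify carefully that $L\not\equiv 0$
and that the chosen $D$ is not the zero divisor; the subtlety is that $\kappa(L)\geq 0$
only asserts $h^0(X,mL)>0$ for some $m$, and one needs the corresponding section
to cut out a genuine effective divisor rather than forcing $mL\sim 0$. Once this
is settled, the remaining steps are formal: applying Theorem \ref{thm.3} and
using numerical proportionality of $\Delta+t'L$ to $L$ to transfer ampleness
back to $L$. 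The whole argument is therefore a clean corollary of
Theorem \ref{thm.3}, with the only real content being the passage from
$\mathbb Q$-effectivity to a usable nonzero boundary.
\end{proof}
```
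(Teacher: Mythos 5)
Your derivation is formally valid, and it is in fact the reduction the authors themselves advertise in the introduction (``Theorem \ref{thm.1} is a direct corollary of Theorem \ref{thm.3}''): take a nonzero $D\in|mL|$ (nonzero because $mL\sim 0$ would contradict strict nefness), apply the log-Serrano-type statement to get $D+tL$ ample for $t\gg 1$, and transfer ampleness back to $L$ via $D\sim_{\mathbb Q}mL$. However, this is not the route the paper takes for Theorem \ref{thm.abundance} itself. The paper gives a direct, self-contained contradiction argument: assuming $L^4=0$, it decomposes $E\sim mL$, passes to a relative canonical model $T$ of a component $V$ via subadjunction, derives the vanishing of all the relevant intersection numbers $K_T^iD^{3-i}$, and kills the two cases (a non-exceptional surface component of $B$ via Theorem \ref{surf-big}, and $B=0$ via abundance for threefolds applied to $K_T\sim_{\mathbb Q}\frac{m}{a_1}L|_V$). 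The crucial point is that the hypothesis $\kappa(L)\geq 0$ makes the boundary divisor proportional to $L$, so the hardest terminal case of Theorem \ref{thm.abundance.eff} (a prime Calabi--Yau divisor) never arises, and the proof stays within Sections 2--3. Your route instead treats Theorem \ref{thm.3} as a black box; this is not circular, since the proofs of Theorem \ref{thm.abundance.eff} and Corollary \ref{cor.abundance.eff} do not use the statement of Theorem \ref{thm.abundance}, but it inverts the paper's logical order (the proof of Theorem \ref{thm.abundance.eff} explicitly borrows its calculations from the proof of Theorem \ref{thm.abundance}, which would then have to be written out independently) and it drags in the Section~4 machinery (Theorem \ref{thm.num3}, Kawamata--Viehweg, Hirzebruch--Riemann--Roch) needed for Corollary \ref{cor.abundance.eff}. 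So what you buy is brevity at the statement level; what the paper's argument buys is a genuinely smaller dependency set and the observation that proportionality of the boundary to $L$ short-circuits the prime Calabi--Yau case.
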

\begin{proof}
We will derive a contradiction by assuming that $L^4=0$.
By the assumption $\kappa(L)\geq 0$, 
there exists a positive integer $m$ and an effective divisor $E$ on $X$
such that $E\sim mL$. 
For simplicity, we assume that $E$ is connected.
Let $E=\sum_{i=1}^{n} a_iE_i$ be the irreducible decomposition of $E$. 
Then, we have 
\[
0=mL^4=L^3 \cdot E=\sum_{i=1}^{n} a_i L^3 \cdot E_i, 
\]
and thus we obtain $L^3 \cdot E_i=0$ for any $i$. 
Set $V:=E_1$ and $F:=\sum^n_{i\geq 2}a_iE_i$.
Note that $F$ could be the zero divisor.
Let us consider the morphisms
\[
\nu\circ \pi =\mu \colon T \xrightarrow{\quad \pi \quad}  V^{\nu}  \xrightarrow{\quad \nu \quad}  V, 
\] 
where $\mu:=\nu\circ \pi$, $\nu\colon V^{\nu}\to V$ is the normalization, 
and $\pi\colon T\to V^{\nu}$ is a relative canonical model.
By subadjunction (see \cite{jiang}*{Proposition 5.1}), 
we can find an effective divisor $\Delta_{V^{\nu}}$ on $V^{\nu}$ such that 
\[
\nu^*V|_{V^{\nu}}=\nu^*(K_X+V)|_{V^{\nu}}=K_{V^{\nu}}+\Delta_{V^{\nu}}. 
\]

Since $T$ is a relative canonical model,
there exists an effective divisor $G$ on $T$ such that
$K_T+G=\pi^*K_{V^{\nu}}$. Set $B:=G+\pi^*\Delta_{V^{\nu}}+(1/a_1)\mu^*F$ and $D:=m\mu^*L|_T$.
Then, we can see that 
\begin{equation}\label{eq.equal}
K_T+B=\mu^*(V+\frac{1}{a_1}F)|_T=\frac{m}{a_1}\mu^*L|_T=\frac{1}{a_1}D
\end{equation}
is almost strictly nef.
If $K_T+tD$ is big for some $t>0$, then  
$K_T+B+tD=(1/a_1+t)D$ is big since $B$ is effective; 
hence $L|_V$ is also big, 
which is a contradiction to $L^3 \cdot V=L^3 \cdot E_1=0$.
Therefore, we may assume that $K_T+tD$ is not big for any $t>0$. 

As $K_T$ is $\mu$-ample and $D=\mu^*(mL|_V)$ is Cartier,  
it follows that $K_T+tD$ is nef for $t\geq 6$ from Lemma \ref{nef-lem}.
Therefore, we obtain $(K_T+tD)^3=0$ for $t\geq 6$ since $K_T+tD$ is not big, 
which yields that 
\begin{align*}
0&=(K_T+6D+(t-6)D)^3 \\
&=(K_T+6D)^3+3(t-6)(K_T+6D)^2 \cdot D+3(t-6)^2(K_T+6D)\cdot D^2+(t-6)^3D^3. 
\end{align*}
All the terms on the right-hand side should be zero since $D$ is also nef. 
Hence, we can conclude that 
\[
K_T^3=K_T^2 \cdot D=K_T \cdot D^2=D^3=0.
\]
Combining this with \eqref{eq.equal}, we can deduce that  
\begin{equation}\label{eq.key}
\begin{split}
&D^2 \cdot B=D^2 \cdot  (K_T+B)=\frac{1}{a_1}D^3=0,
\\ 
&D  \cdot K_T \cdot  B=D \cdot  K_T \cdot  (K_T+B)=\frac{1}{a_1}K_T\cdot  D^2=0,
\end{split}
\end{equation}
and
\begin{equation}\label{eq.key.2}
\begin{split}
&K^2_T \cdot  B=K_T^2 \cdot  (K_T+B)=\frac{1}{a_1}K_T^2 \cdot  D=0,
\\ 
&K_T \cdot  B^2=K_T \cdot  (K_T+B)^2=\frac{1}{a^2_1}K_T \cdot  D^2=0,
\\ 
&B^3=(K_T+B)^3=\frac{1}{a^3_1}D^3=0.
\end{split}
\end{equation}
We now derive contradictions on (some components of) $B$ using these equations.

\begin{case}
Let us 
consider the case where there exists an integral component $S$ of $B$ 
such that $S$ is not $\mu$-exceptional. 
As $D$ and $K_T+tD$ are nef, we have that $0=D^2 \cdot  B\geq a_S D^2 \cdot  S\geq 0$ and
\[
0=(K_T+tD) \cdot D \cdot B\geq a_S(K_T+tD) \cdot D  \cdot S\geq 0
\] 
by \eqref{eq.key}, where $a_S> 0$ is the coefficient of $S$ in $B$. 
This indicates that 
\begin{equation}\label{eq.key.1}
D \cdot K_T \cdot S=D^2 \cdot S=0.
\end{equation}
Let us consider the morphisms 
\[
\xymatrix{R\ar[r]^{\gamma}&  S^{\nu} \ar[r]^{\nu}& S\ar[r]^\mu & S^*}, 
\]
where $S^*:=\mu(S)$. 
Note that $S^*$ is a surface on $V$ 
as $S$ is not contained in the $\mu$-exceptional locus.
Note also that $\nu\colon S^{\nu}\to S$ is the normalization and
$\gamma\colon R\to S^{\nu}$ is a relative minimal resolution.
We can see that 
$$
\gamma^*\nu^*D|_R
=m\gamma^*\nu^*\mu^*L|_R=m\gamma^*\nu^*\mu^*(L|_{S^*})
$$
is an almost strictly nef Cartier divisor on $R$.
It follows that $K_R+r\gamma^*\nu^*D|_R$ is big for $r>3$ from  Theorem \ref{surf-big}.
By subadjunction (see \cite{jiang}*{Proposition 5.1}), 
we have 
$K_R\leq \gamma^*\nu^*(K_T+S)|_R$, and so 
$(K_T+S+rD)|_S$  is also big for $r>3$.
Moreover, by the Zariski decomposition (see \cite{fujino-foundations}*{Theorem 3.5.2}), 
we can see that 
a curve $C\in |K_T+S+rD|$ is not contained in the $\mu|_S$-exceptional locus. 
Then, from \eqref{eq.key} and \eqref{eq.key.1}, we can conclude that
\[
0<D|_S\cdot C=kD \cdot (K_T+S+rD)  \cdot S=kD \cdot K_T  \cdot S+kD \cdot S^2
+krD^2 \cdot S=kD \cdot S^2.
\]
On the other hand, we have that 
\[
0=D \cdot (K_T+B)  \cdot S=D \cdot K_T \cdot S+D\cdot B\cdot S=
D\cdot (B-a_SS) \cdot S+a_S D \cdot S^2\geq a_SD \cdot S^2. 
\]
This is a contradiction.
\end{case}
\begin{case}
Let us consider the case where any integral components of $B$ are $\mu$-exceptional.
In this case, both $F$ and $\Delta_{V^{\nu}}$ are the zero divisors
(i.e., $E=a_1V$, $\mu=\pi$, and $V$ is normal).
As $T$ is a relative canonical model, the canonical divisor $K_T$ is $\mu$-ample. 
For a contradiction, we assume that $B\neq 0$.  
We have that $\dim \mu(B)\ge 1$ by $B^3=0$ (see \eqref{eq.key.2}). 
Take a very ample divisor $H$ on $V$ such that $K_T+\mu^*H$ is ample on $T$.
Then, for a sufficiently large integer $s$,  
we can see that $s(K_T+\mu^*H)\cdot B$ is a curve not contracted by $\mu$.
The divisor $\mu_*D=mL|_V$ is strictly nef, and so 
$s(K_T+\mu^*H) \cdot B \cdot D>0$ by the projection formula. 
Combining this with \eqref{eq.key},
it follows that $H \cdot \mu(B) \cdot(mL|_V)=\mu^*H \cdot B \cdot D>0$, which is a contradiction.
Therefore, it turns out that $B=0$ and $T=V$. 
However, this means that $K_T=V|_V\sim_{\mathbb Q}L|_V$ is strictly nef.
By the abundance theorem for threefolds, we can see that 
$K_T$ is semiample, and hence ample.
This contradicts $K^3_T=0$.
\end{case}
In conclusion, we can obtain $L^4>0$ in any case. 
Then, by the basepoint-free theorem for K-trivial fourfolds, 
the divisor $L$ is semiample, and thus ample.
\end{proof}

\begin{rem}
Most of the discussions in Case 2 were contributed by Chen Jiang. 
The whole proof works almost verbatim in a more general case; 
specifically, when $(X, \Delta)$ is a log canonical pair of dimension four such that  $K_X+\Delta$ is strictly nef, 
we can prove that $K_X+\Delta$ is ample  if $\kappa(K_X+\Delta)\geq 0$.
This is discussed in an unpublished paper by Chen Jiang and the first author.
\end{rem}

If Conjecture \ref{serrano.log.version} holds true in  dimension three
and $\kappa(K_X+\Delta+tL)\geq 0$ for some $t\in \mathbb Q$, 
then we can even prove Conjecture \ref{serrano.log.version} in dimension four using very similar arguments.
We state the precise result for K-trivial fourfolds and sketch the proof below to see how it works.

\begin{thm}\label{thm.abundance.eff}
Let $X$ be a K-trivial fourfold, $V$ be a prime divisor, and $L$ be a strictly nef $\mathbb Q$-Cartier divisor
on $X$. If $V$ is not a prime Calabi--Yau divisor,
then $V+tL$ is ample for $t\gg 1$.
\end{thm}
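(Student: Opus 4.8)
The plan is to show that $V+tL$ is simultaneously nef, strictly nef, and big for $t\gg1$, and then to conclude ampleness from the basepoint-free theorem: since $K_X\sim0$ we may write $V+tL=K_X+(V+tL)$ with $V+tL$ nef and big, so it is semiample, and a strictly nef semiample divisor is ample (the induced contraction is finite). For nefness I would first control the restriction to $V$. Applying subadjunction (see \cite{jiang}*{Proposition 5.1}) on the normalization $\nu\colon V^{\nu}\to V$ produces an effective divisor $\Delta_{V^{\nu}}$ with $K_{V^{\nu}}+\Delta_{V^{\nu}}=\nu^*(V|_V)$, so that $(V^{\nu},\Delta_{V^{\nu}})$ is log canonical; as $\nu^*(L|_V)$ is strictly nef by Proposition \ref{prop.asn}~(1), Lemma \ref{nef-lem} gives that $\nu^*((V+tL)|_V)=K_{V^{\nu}}+\Delta_{V^{\nu}}+t\,\nu^*(L|_V)$ is nef for $t\gg1$, hence $(V+tL)|_V$ is nef on $V$. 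Curves not contained in $V$ meet $V+tL$ positively because $V$ is effective and $L$ is strictly nef, so $V+tL$ is nef on $X$, and adding one more copy of $L$ makes it strictly nef, for $t\gg1$.

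It then suffices to prove bigness. If $L^4>0$ then $L$ is big with $\kappa(L)\geq0$, so $L$ is ample by Theorem \ref{thm.abundance} and $V+tL$ is ample for $t\gg1$; hence I may assume $L^4=0$. Arguing by contradiction, suppose $V+tL$ is not ample for infinitely many $t$. By the previous paragraph these divisors are nef and strictly nef, so they cannot be big, and therefore $(V+tL)^4=0$ for infinitely many $t$. Since the left-hand side is a polynomial in $t$ of degree $\leq4$, every coefficient must vanish:
\[
V^4=V^3\cdot L=V^2\cdot L^2=V\cdot L^3=L^4=0 .
\]

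Now I would import the birational machinery from the proof of Theorem \ref{thm.abundance}. Set $\mu=\nu\circ\pi\colon T\xrightarrow{\pi}V^{\nu}\xrightarrow{\nu}V$ with $\pi$ a relative canonical model, and write $K_T+B=\mu^*(V|_V)$, $D:=m\mu^*(L|_V)$, where $B\geq0$ collects $\pi^*\Delta_{V^{\nu}}$ and the divisor $G$ with $K_T+G=\pi^*K_{V^{\nu}}$. The divisor $D$ is almost strictly nef and Cartier, and the vanishings above give $(K_T+B)^a\cdot D^b=0$ for all $a+b=3$. Using that $K_T+tD$ is nef for $t\geq6$ (Lemma \ref{nef-lem}), that $D$ is nef, and that $B$ is effective, the computations producing \eqref{eq.key} and \eqref{eq.key.2} yield $K_T^3=K_T^2\cdot D=K_T\cdot D^2=D^3=0$ together with the vanishing of every product involving $B$. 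I would then run the two cases of Theorem \ref{thm.abundance}: if $B$ has a component $S$ not contracted by $\mu$, combining Theorem \ref{surf-big} on a resolution of $S$ with subadjunction and the strict nefness of $L|_{\mu(S)}$ reproduces the numerical contradiction of Case~1; hence every component of $B$ is $\mu$-exceptional, which forces $\Delta_{V^{\nu}}=0$, and the projection-formula argument of Case~2 (using $K_T$ $\mu$-ample and $\mu_*D=mL|_V$ strictly nef) forces $B=0$ and $T=V^{\nu}=V$. In particular $V$ is normal with at worst canonical singularities and $K_V=V|_V$.

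It remains to identify $V$ as a prime Calabi--Yau divisor, which will contradict the hypothesis. From the vanishings one has $K_V^3=V^4=0$ and $K_V^i\cdot(L|_V)^{3-i}=0$ for all $i$. The divisor $V$ cannot be uniruled: a Mori fibre structure on a minimal model of a $K_V$-MMP would make $L|_V$ big, contradicting $(L|_V)^3=V\cdot L^3=0$; and $\kappa(V)\geq1$ is impossible because an effective $D\in|mK_V|$ would have a component $D_0$ with $(L|_{D_0})^2=D_0\cdot(L|_V)^2=0$, which is excluded for a strictly nef $\mathbb Q$-divisor on a surface by the Hodge index theorem and Serrano's surface result \cite{serrano}. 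Thus $\kappa(V)=0$, and abundance for threefolds gives $K_V\sim_{\mathbb Q}0$; the vanishing $H^1(V,\mathcal O_V)=0$ follows similarly, since a nonzero $H^1$ would furnish an abelian or elliptic fibration direction along which $L|_V$ becomes big, again contradicting $(L|_V)^3=0$. Hence $V$ is a prime Calabi--Yau divisor, a contradiction, and $V+tL$ is big, hence ample, for $t\gg1$. The main obstacle I anticipate is precisely this final identification step: pinning down all four defining properties of a prime Calabi--Yau divisor, as these are exactly the conditions ruled out by hypothesis, and transporting the surface-theoretic Case~1 input of Theorem \ref{surf-big} to the possibly singular threefold setting with care.
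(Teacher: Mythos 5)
Your proposal follows the paper's strategy through the numerical vanishings, the passage to the relative canonical model $T$ of $V$, and the elimination of the boundary $B$, but it diverges at the final and most delicate step, and there the argument breaks. Having reduced to $T=V$ normal canonical with $K_V=V|_V$ and $K_V^{i}\cdot (L|_V)^{3-i}=0$, you rule out $\kappa(V)\geq 1$ by asserting that a strictly nef $\mathbb Q$-divisor on a surface cannot have self-intersection zero, ``by the Hodge index theorem and Serrano's surface result.'' This is false: Mumford's examples (cited in the introduction via \cite{hart-1}) are strictly nef divisors $L$ on ruled surfaces with $L^2=0$ and $\kappa(L)=-\infty$, and Serrano's surface theorem concerns ampleness of $K_S+tL$, not of $L$ itself. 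Likewise, the claim that uniruledness of $V$ would ``make $L|_V$ big'' via a Mori fibre structure is unsubstantiated (strict nefness is not preserved by the MMP, and a Mori fibre space gives no bigness of $L$), and ``a nonzero $H^1$ would furnish a fibration direction along which $L|_V$ becomes big'' is not an argument. The paper handles exactly these cases by quoting the Albanese-map arguments of \cite{serrano} and \cite{ccp} (which prove ampleness of $K_T+tL|_T$, not bigness of $L$), the known results for strictly nef divisors on Calabi--Yau-type threefolds when $K_T\not\equiv 0$, and, crucially, a Riemann--Roch computation ($\chi(T,mL|_T)\geq 1$ combined with \cite{oguiso}*{Proposition 2.7}) to dispose of the case $K_T\sim_{\mathbb Q}0$ with $K_T\not\sim 0$. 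That last case is invisible in your write-up: you pass from $\kappa(V)=0$ to ``$V$ is a prime Calabi--Yau divisor,'' but the definition requires the honest linear equivalence $K_V\sim 0$, and $\kappa(V)=0$ for a canonical threefold does not even yield $K_V\sim_{\mathbb Q}0$ without further work, since $K_V$ could be a nonzero effective divisor exceptional over a minimal model.

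A smaller but genuine gap occurs at the start. To prove nefness of $(V+tL)|_V$ you apply Lemma \ref{nef-lem} to the pair $(V^{\nu},\Delta_{V^{\nu}})$ produced by subadjunction, but that lemma requires the pair to be log canonical, which would need $(X,V)$ to be log canonical near $V$ --- not something available for an arbitrary prime divisor $V$ on $X$. The paper sidesteps this by applying Lemma \ref{nef-lem} to $(X,\delta V)$ for a small rational $\delta>0$ (always log canonical on a smooth $X$), which yields strict nefness of $V+tL$ on all of $X$ for $t>\lceil 8/\delta\rceil$ in one stroke; you should do the same. The intermediate portion of your argument (the choice $D=m\mu^*(L|_V)$ instead of $D_t=\mu^*(V+tL)|_T$, and the derivation of the vanishings involving $B$) is plausible and at the same level of detail as the paper, so I do not count it as a gap, but the final identification step as written does not close the proof.
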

\begin{proof}
Let $\delta\in \mathbb Q_{>0}$ be a small number such that $(X, \delta V)$ is a log canonical pair.
We may assume that $L$ is Cartier. Then,
by Lemma \ref{nef-lem}, the $\mathbb Q$-divisor $\delta V+sL$ is strictly nef for $s>8$. 
In particular, $V+tL$ is strictly nef for $t>\lceil \frac{8}{\delta}\rceil$.
Assume that $V+tL$ is not ample. 
Then $V+tL$ is not big for any $t>\lceil \frac{8}{\delta}\rceil$.
Simple calculations (as in Theorems \ref{surf-big} and \ref{thm.abundance}) show that
\begin{equation}\label{eq.key.3}
0=V^3 \cdot L=V^2 \cdot L^2=V \cdot L^3=L^4.
\end{equation}
Let us consider the morphisms 
\[
\nu\circ \pi =\mu \colon T \xrightarrow{\quad \pi \quad}  V^{\nu}  \xrightarrow{\quad \nu \quad}  V, 
\] 
where $\mu:=\nu\circ \pi$, 
$\nu\colon V^{\nu}\to V$ is the normalization, 
and $\pi\colon T\to V^{\nu}$ is a relative canonical model. 
Set $D_t:=\mu^*(V+tL)|_T$,
which is almost strictly nef for $t\gg 1$.
We can choose $t$ to be an integer such that $V+tL$ is Cartier.
Then, $K_T+6D_t$ is nef for $t\gg 1$ by Lemma \ref{nef-lem}.
As in Theorem \ref{thm.abundance},
there exists an effective divisor $B$ such that
$K_T+B=\mu^*V|_T$
and $K_T+B+6D_t=\mu^*(7V+6tL)|_T$ is nef for $t\gg 1$,  but not 
big by \eqref{eq.key.3}.
It follows that $K_T+6D_t$ is nef but not big for $t\gg 1$. 
Combining this with \eqref{eq.key.3}, we can conclude  that 
\[D_t^2 \cdot B= D_t \cdot K_T \cdot B=B^3=0
\]
for $t\gg 1$ using similar calculations as in Theorem \ref{thm.abundance}.

Suppose that some integral component of $B$ is not $\mu$-exceptional. 
Then, the proof of Case 1 of Theorem \ref{thm.abundance}
can be applied without any modification to obtain a contradiction. Therefore, we assume that $\dim \mu(B)\leq 1$.
In this case, $B$ is $\mu$-exceptional and $V$ is normal. 
Assume that $B\neq 0$. Then, $\dim \mu(B)=1$ by $B^3=0$.
As in Theorem \ref{thm.abundance}, $(K_T+\mu^*H) \cdot B \cdot D_t>0$ for a very ample divisor $H$ on $V$. 
It follows that
 $H \cdot \mu(B) \cdot (V+tL)|_V>0$, which is impossible. Therefore, $B=0$, $T=V$, and 
$D_t=K_T+tL|_T$ is not ample for $t\gg 1$.
Note that $T$ is a Gorenstein variety with at worst canonical singularities.

If $q(T)=h^1(T,\mathcal O_T)\neq 0$, 
then we can prove that $K_T+tL|_T$ is ample for $t\gg 1$, 
using the Albanese map (as in \cite{serrano} or \cite{ccp}*{Section 3}).  
Thus, we assume that $h^1(T,\mathcal O_T)=0$.
In this case, we could also obtain the ampleness from the proofs in
\cites{ccp, oguiso, serrano} if $K_T$ is not numerically trivial.
Therefore, we further assume that $K_T\equiv 0$, and thus $K_T\sim_{\mathbb Q} 0$.
If $K_T\not\sim 0$, then we have 
$$h^3(T,\mathcal O_T)=h^0(T,\mathcal O_T(K_T))=0
\text{ and }
\chi(T,\mathcal O_T)=h^0(T,\mathcal O_T)+h^2(T,\mathcal O_T)\geq 1.
$$
Then, we can see that 
\[
\chi(T,mL|_T)=\frac{(L|_T)^3}{6}m^3+\frac{c_2(T) \cdot L|_T}{12}m+\chi(T,\mathcal O_T)\geq 1
\] for $m\geq 0$,
where $c_2(T)$ is the pushforward of the second Chern class of some resolution 
(see \cite{oguiso}*{Lemma 1.4}). Then,  $K_T+tL|_T$ is ample for $t\gg 1$ according to the proof of
\cite{oguiso}*{Proposition 2.7}. Therefore, the remaining case is that 
$K_T\sim 0$ and $H^1(T,\mathcal O_T)=0$, that is, $T=V$ is a prime Calabi--Yau divisor.
\end{proof}

If Conjecture \ref{serrano.log.version} holds in dimension three,
then the proof of Theorem \ref{thm.abundance.eff} can stop at the end of the second paragraph.
As Conjecture \ref{serrano.log.version}  holds in dimension two (see Corollary \ref{surf.ample}),
we have an unconditional result in dimension three using the same technique. 
\begin{thm}\label{thm.ab.dim3}
Let $X$ be a K-trivial threefold, $V$ be a prime divisor, and $L$ be a strictly nef $\mathbb Q$-Cartier divisor on $X$.
Then, $V+tL$ is ample for $t\gg 1$. In particular,
if $\nu(L)=2$, then $L|_V$ is ample on $V$ for any prime divisor $V$. 
\end{thm}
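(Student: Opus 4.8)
The plan is to reproduce, one dimension lower, the first two paragraphs of the proof of Theorem \ref{thm.abundance.eff}, and then to close the argument by invoking the two-dimensional case of Serrano's log conjecture, which is already available as Corollary \ref{surf.ample}. First I would fix $\delta\in\mathbb Q_{>0}$ with $(X,\delta V)$ log canonical and assume $L$ is Cartier. Since $K_X\sim 0$, Lemma \ref{nef-lem} (with $n=3$) shows that $\delta V+sL$ is nef for $s\geq 6$, hence strictly nef for $s>6$, so $V+tL$ is strictly nef for $t\gg 1$. Suppose $V+tL$ is not ample. Arguing as in Corollary \ref{surf.ample} via the basepoint-free theorem, if $V+tL$ were big then, being nef and big, it would be semiample, and a strictly nef semiample divisor is ample; hence $V+tL$ is not big, i.e. $(V+tL)^3=0$, for all $t\gg 1$. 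Reading this cubic polynomial in $t$ coefficientwise yields
\[
V^3=V^2\cdot L=V\cdot L^2=L^3=0.
\]

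Next I would form the model $\mu\colon T\xrightarrow{\pi}V^\nu\xrightarrow{\nu}V$, where $\nu$ is the normalization and $\pi$ a relative canonical model, so that $T$ is a normal projective surface with at worst canonical (hence log canonical) singularities and $K_T$ is $\mu$-ample. By subadjunction there is an effective divisor $B$ on $T$ with $K_T+B=\mu^*V|_T$; set $D_t:=\mu^*(V+tL)|_T$, which is almost strictly nef for $t\gg 1$. The displayed vanishings give $D_t^2=(V+tL)^2\cdot V=0$ and $(K_T+B)^2=V^3=0$. By Lemma \ref{nef-lem} on the surface $T$, the divisor $K_T+4D_t$ is nef; since $K_T+B+4D_t=\mu^*(5V+4tL)|_T$ has self-intersection $(5V+4tL)^2\cdot V=0$, comparing these two nef classes (whose difference $B$ is effective) forces $(K_T+4D_t)^2=0$. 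Expanding and using the nefness of $D_t$ then gives
\[
K_T^2=K_T\cdot D_t=D_t^2=0,\qquad B\cdot D_t=0.
\]

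The heart of the argument is a short case analysis on $B$. If some component $C$ of $B$ is not $\mu$-exceptional, then $\mu(C)$ is a curve in $X$ and strict nefness of $V+tL$ gives $D_t\cdot C=(V+tL)\cdot\mu_*C>0$, contradicting $0=D_t\cdot B\geq a_C\,D_t\cdot C$ (with $a_C>0$ the coefficient of $C$); this also forces $V$ to be normal. Hence $B$ is $\mu$-exceptional, so $\mu_*B=0$ and $\mu^*V|_T\cdot B=0$, whence $K_T\cdot B=-B^2$; combined with $0=(K_T+B)^2=K_T^2+2K_T\cdot B+B^2=-B^2$ this yields $B^2=0$. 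Since the intersection form on the $\mu$-exceptional curves of a birational morphism of normal surfaces is negative definite, $B^2=0$ forces $B=0$. Therefore $T=V$ is a normal surface with canonical singularities, $K_V=V|_V$, and $D_t=(V+tL)|_V=K_V+tL|_V$.

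Finally I would apply the two-dimensional Serrano log conjecture. As $(V,0)$ is log canonical and $L|_V$ is strictly nef and Cartier, Corollary \ref{surf.ample} shows $K_V+tL|_V=D_t$ is ample for $t>4$, so $D_t^2>0$, contradicting $D_t^2=0$. Hence $V+tL$ is ample for $t\gg 1$. For the final assertion, if $\nu(L)=2$ then $L^3=0$ and $L^2$ is a nonzero class in $\NE(X)$ (it is a limit of complete-intersection curve classes); by Kleiman's criterion the ample class $V+tL$ satisfies $(V+tL)\cdot L^2>0$, and $L^3=0$ gives $(L|_V)^2=V\cdot L^2>0$. Since $L|_V$ is moreover strictly nef, the Nakai--Moishezon criterion shows $L|_V$ is ample. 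The main obstacle, conceptually, is the reduction to the case $T=V$: this is precisely the step where the analogous fourfold argument in Theorem \ref{thm.abundance.eff} must appeal to the still-open three-dimensional Serrano conjecture, whereas here the resulting surface case is covered unconditionally by Corollary \ref{surf.ample}; mechanically, the delicate point is the intersection-number bookkeeping forcing $B=0$.
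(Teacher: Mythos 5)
Your proposal is correct and follows essentially the route the paper intends: it carries out the argument of Theorem \ref{thm.abundance.eff} one dimension lower and closes it with the two-dimensional log version of Serrano's conjecture, and your proof of the final assertion (Kleiman applied to $L^2\in\NE(X)$, then Nakai--Moishezon) coincides with the paper's. The only remark worth making is that your case analysis forcing $B=0$ (via negative definiteness of the $\mu$-exceptional intersection form) is a harmless detour: once $K_T+4D_t$ is nef with $(K_T+4D_t)^2=0$, Theorem \ref{surf-big} applied to the klt surface $T$ and the almost strictly nef divisor $D_t$ already yields bigness of $K_T+sD_t$ for $s>3$ and hence the contradiction, with no need to reduce to $T=V$ or to invoke Corollary \ref{surf.ample}.
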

\begin{proof}
We only prove the last statement. By assumption, we have $L^3=0$. 
As $V+tL$ is ample for $t\gg 1$, we can see that $L^2 \cdot (V+tL)>0$ 
by Kleiman's ampleness criterion (see \cite{liu-svaldi}*{Lemma 2.1}). 
 It follows that $(L|_V)^2=L^2\cdot V>0$; 
 hence, $L|_V$ is ample by the Nakai--Moishezon criterion.
\end{proof}

\begin{rem}\label{rem.hk}
It has been pointed out by Chen Jiang that 
$V+tL$ is ample for $t\gg 1$ unless $L\sim_{\mathbb Q} V$ 
for any prime divisor $V$ and any strictly nef divisor $L$ on a hyperk\"ahler manifold.
This follows directly from Lemma \ref{nef-lem} and the Hodge index theorem on  hyperk\"ahler manifolds
(see \cite{verbitsky}*{Lemma 3.13}). 
\end{rem}

From the above results,
it is (not) surprising to see 
that the log version of Serrano's conjecture is easier than its original version.
This is because the effective divisor $V$ allows us to perform induction on the dimension.
This provides evidence supporting the proposed Conjecture \ref{serrano.log.version}.
However, these log version results impose extreme restrictions on the structure of $X$
if $L$ is not ample.
For example, if there exists a strictly nef divisor $L$ on a Calabi--Yau threefold $X$ such that
$L^3=c_2(X) \cdot L=0$, which is the only remaining case of Serrano's conjecture in dimension three,
then 
\begin{equation}\label{eq.example}
c_2(X) \cdot V=c_2(X) \cdot (V+tL)>0
\end{equation} 
for any prime divisor $V$ and $t\gg 1$,
because $c_2(X)\neq 0$ is pseudoeffective by \cite{miyaoka}*{Theorem 6.6} and
 $V+tL$ is ample by Theorem \ref{thm.ab.dim3}.
By taking the limit, we can see that 
$c_2(X) \cdot E\geq 0$ for any pseudoeffective divisor $E$ on $X$.
Equivalently, $c_2(X)$ is a limit of movable curves in the sense of \cite{bdpp} by the duality of cones.
This is a rather strong restriction on the position and shape
 of the pseudoeffective cone in $H^2(X,\mathbb R)$.
Moreover, if $V$ is smooth for simplicity (otherwise, take resolutions; see \cite{oguiso}*{Lemma 3.3}), 
then by the exact sequence
\[
0\to \mathcal{O}_{V}( T_V) \to \mathcal \mathcal{O}_{V}( T_X) \to \mathcal O_V(K_V)\to 0,
\]
we have that $c_2(X) \cdot V=c_2(V)-c_1^2(V)>0$. 
This is also a strong restriction on prime divisors.
For instance, if $V$ is a minimal ruled surface, then $c_2(V)-c_1^2(V)=4(g-1)>0$ implies $g>1$, where $g$ is the genus
of the base curve. 
Conversely, by finding a surface $V$ on $X$ such that $c_2(X)  \cdot V=c_2(V)-c_1^2(V)\leq 0$,
we can show that $L$ has to be ample. 
For example, if there exists a minimal ruled surface $V$ whose base curve is rational or elliptic, 
then $c_2(X)  \cdot V\leq 0$; 
or, if there exists a fibration $\pi\colon X\to Y$ of type $\rm{\,I\,}_0$ or type $\rm{I\hspace{-.01em}I}_0$
in the sense of Oguiso \cite{oguiso}*{Main Theorem},
then $c_2(X)  \cdot \pi^*H=0$ for any ample divisor $H$ on the base $Y$.
As all the above cases contradict \eqref{eq.example}, any strictly nef divisor on them has to be ample.
We will see these restrictions in dimension four in the following section.

\section{Non-vanishing for strictly nef divisors}\label{sec4}

In this section, we study when  a (strictly) nef divisor $L$ on a K-trivial fourfold $X$ 
is $\mathbb Q$-effective (i.e., $\kappa(L)\geq 0$ holds). 
We will examine each case according to the numerical dimension $\nu(L)$. 
The divisor $L$ is automatically big in the case $\nu(L)=4$ and 
is trivial  in the case $\nu(L)=0$ by $h^1(X, \mathcal{O}_X)=0$. 
Hence, for our purpose, it is sufficient to consider $\nu(L)=1, 2, 3$.

\subsection{The case of $\nu(L)=3$}
In the case $\nu(L)=3$, 
using the Kawamata--Viehweg vanishing theorem, 
we can prove the non-vanishing result from the assumption that $L$ is nef.

\begin{thm}\label{thm.num3}
Let $X$ be a K-trivial fourfold and $L$ be a nef 
Cartier divisor with $\nu(L)=3$ on $X$. 
Then, the divisor $L$ is semiample  with $\kappa(L)=\nu(L)=3$.
\end{thm}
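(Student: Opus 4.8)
The plan is to split the statement into a non-vanishing part, where I produce a nonzero section of a multiple of $L$, and an abundance part, where I upgrade $\kappa(L)\ge 0$ to semiampleness with $\kappa(L)=\nu(L)=3$. The engine for the first part is a numerical-dimension refinement of the Kawamata--Viehweg vanishing theorem for nef (not necessarily big) line bundles: for a nef divisor $L$ on a smooth projective $n$-fold one has $H^q(X,K_X+L)=0$ for every $q>n-\nu(L)$. This is precisely the vanishing that the hard Lefschetz theorem and the Bedford--Taylor product of positive currents are designed to deliver in the non-big range flagged in the introduction. Applying it to $mL$, which is nef with $\nu(mL)=\nu(L)=3$, and using $K_X\sim 0$, I obtain
\[
H^q(X,mL)=0\qquad\text{for all }q\ge 2\text{ and all }m\ge 1.
\]

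First I would combine this with the Hirzebruch--Riemann--Roch computation \eqref{eq.hrr.1} and Lemma \ref{lem.fc}. Since $H^q(X,mL)=0$ for $q\ge 2$, the Euler characteristic collapses to $\chi(X,mL)=h^0(X,mL)-h^1(X,mL)$, while Lemma \ref{lem.fc} gives $\chi(X,mL)\ge 2$. Hence $h^0(X,mL)\ge 2$ for every $m\ge 1$; in particular some multiple of $L$ carries two linearly independent sections, the associated pencil is nonconstant, and therefore $\kappa(L)\ge 1$. This already settles the $\mathbb Q$-effectivity demanded by Theorem \ref{thm.2} in this case.

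The remaining, genuinely harder, task is to promote $\kappa(L)\ge 1$ to $\kappa(L)=\nu(L)=3$ together with semiampleness. Here the Euler characteristic gives no further information: the computation \eqref{eq.hrr.1} grows only like $m^2$, whereas a semiample $L$ of Iitaka dimension $3$ would force $h^0(X,mL)$ to grow like $m^3$, the extra growth being hidden entirely in the uncontrolled term $h^1(X,mL)$. The plan is therefore geometric. Fixing an effective member $D\in|m_0L|$, each prime component $V$ satisfies $L^3\cdot V=0$ (since $L^3\cdot D=m_0L^4=0$ and $L$ is nef), so $L|_V$ is a nef divisor of numerical dimension $\le 2$ on the threefold $V$; on a resolution of $V$ the two-dimensional results (Theorem \ref{surf-big} and Corollary \ref{surf.ample}) together with the abundance theorem in dimension $\le 3$ show that $L|_V$ is semiample. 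I would then globalize this fiberwise semiampleness by passing to the nef reduction $f\colon X\dashrightarrow Y$, whose base has dimension $n(L)\ge\nu(L)=3$: in the $\nu=n-1$ regime the general fibers are curves, and the goal is to show that $f$ is an honest morphism onto a threefold along which $L$ descends to an ample class, which yields semiampleness and $\kappa(L)=\nu(L)=3$ simultaneously.

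The main obstacle is exactly this descent step. Establishing $\kappa(L)=\nu(L)$ is a case of the generalized (non-)abundance conjecture, and the difficulty is concentrated in controlling $H^1(X,mL)$ — equivalently, in proving that the numerically defined fibration structure is realized by a genuine morphism along which $L$ is a pullback. I expect this to require the finer positivity input (hard Lefschetz for the nef class and the Bedford--Taylor currents) rather than cohomological counting alone, and to lean on the lower-dimensional abundance already available on the components of $D$.
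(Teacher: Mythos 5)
Your first half is correct and lands exactly where the paper does: you establish $H^q(X,mL)=0$ for $q\ge 2$ and then read off $h^0(X,mL)\ge\chi(X,mL)\ge 2$ from Lemma \ref{lem.fc}, hence $\kappa(L)\ge 1$. You get the vanishing from the numerical-dimension form of Kawamata--Viehweg (which is a correct, purely algebraic statement due to Kawamata and Viehweg, not a consequence of hard Lefschetz or Bedford--Taylor products --- those tools are reserved in this paper for the cases $\nu(L)\le 2$), whereas the paper gets the same vanishing by restricting to a very ample hypersurface $H$, where $L|_H$ is big and nef, and chasing the sequence $0\to\mathcal O_X(mL)\to\mathcal O_X(H+mL)\to\mathcal O_H(K_H+mL)\to 0$. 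Either route is fine.

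The second half, however, has a genuine gap, and it sits precisely where you place your ``main obstacle.'' Your intermediate claim that $L|_V$ is semiample on each component $V$ of $D\in|m_0L|$ ``by the two-dimensional results together with the abundance theorem in dimension $\le 3$'' is not justified: abundance in dimension $\le 3$ concerns the canonical (or adjoint) divisor $K_V$, not an arbitrary nef divisor on $V$. Unless $D$ is irreducible and reduced (so that adjunction identifies $m_0L|_V$ with $K_V$ up to the different), $L|_V$ is just some nef divisor of numerical dimension $\le 2$ on a threefold, and its semiampleness is exactly the open semiampleness conjecture that this paper is trying to circumvent. The subsequent globalization through the nef reduction is then left unresolved by your own admission. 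The paper's actual closing move is much shorter and is the step you are missing: since $K_X\sim 0$, one has $L\sim_{\mathbb Q}K_X+L$, a nef log-canonical-type divisor on a fourfold with $\kappa\ge 1=\dim X-3$, and semiampleness then follows from the known abundance results in this range --- \cite{fukuda}*{Main Theorem} or \cite{fujino-4fold}*{Theorem 3.3} --- which reduce the problem along the Iitaka fibration to abundance in dimension $\le 3$. Once $L$ is semiample, $\kappa(L)=\nu(L)=3$ is automatic. Without invoking such a result (or reproving it), your argument does not close.
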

\begin{proof}
Let $H$ be a very ample smooth hypersurface on $X$. 
Then, by the assumption $\nu(L)=3$, the restriction $L|_H$ to $H$ is big (and nef). 
Let us consider the long cohomology sequence  induced by the exact sequence:
\[
0\to \mathcal O_X(mL) \to  \mathcal O_X(H+mL)\to \mathcal O_H(K_H+mL)\to 0.
\]
Here we used the adjunction formula $(K_X+H)|_H=K_H$ and $K_X \sim 0$. 
For any $i>0$ and $m\geq 0$, 
we have $H^i(X, \mathcal O_X(H+mL))=0$ by Kodaira's vanishing theorem 
as $H+mL$ is ample for any $m\geq 0$; 
we also have $H^i(H, \mathcal O_H(K_H+mL))=0$ by
the Kawamata--Viehweg vanishing theorem 
as $L|_H$ is nef and big.
These indicate that $H^i(X, \mathcal O_X(mL))=0$ holds for $i\geq 2$ and  $m\geq 0$.
Then, the Hirzebruch--Riemann--Roch formula (see Lemma \ref{lem.fc}) shows that  
$$
h^0(X, \mathcal O_X(mL))=\chi(X, mL)+h^1(X, \mathcal O_X(mL))\geq 2
$$ holds for $m\geq 1$. 
In particular, we can see that $\kappa (L)\geq 1$.
Then, by \cite{fukuda}*{Main Theorem} or \cite{fujino-4fold}*{Theorem 3.3}, 
the divisor $L$ is semiample, and so $\kappa(L)=\nu(L)=3$ by 
Kawamata's basepoint-free theorem (see \cites{fujino-ok, kawamata}).
\end{proof}

\begin{rem}
A  semiample divisor $L$ with $\nu(L)=3$ on K-trivial fourfold  
induces an elliptic fibration $X \to Y$, 
and so $X$ contains a rational curve on some singular fiber (see \cite{dfm}).
Therefore, to find rational curves on a K-trivial fourfold,
we can try to find some smooth rational point
on the quartic hypersurface $Q\subset \mathbb P^{n-1}$
defined by the quartic form on $H^2(X, \mathbb R)$,
where $n=h^2(X, \mathbb R)$ is the second Betti number. 
This is a fundamental question in Diophantine geometry.
\end{rem}

Theorem \ref{thm.num3} implies 
that a strictly nef divisor $L$ with $\nu(L)=3$ is 
automatically ample, 
which enables us to remove an unnecessary assumption in Theorem \ref{thm.abundance.eff} 
(i.e., the assumption  that $V$ is not a prime Calabi--Yau divisor).

\begin{cor}\label{cor.abundance.eff}
Let $X$ be a K-trivial fourfold and $L$ be a strictly nef 
$\mathbb Q$-Cartier divisor on $X$.
Let $V$ be a prime Calabi--Yau divisor on $X$. 
Then, $L|_V$ is ample and $V+tL$ is ample for $t\gg 1$.
\end{cor}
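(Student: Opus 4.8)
The first observation is that the two assertions are not independent: once $V+tL$ is ample for some $t\gg 1$, restricting to $V$ and using $K_X\sim 0$, adjunction, and $K_V\sim 0$ gives
\[
(V+tL)|_V=V|_V+tL|_V=K_V+tL|_V\sim_{\mathbb Q}tL|_V ,
\]
and the restriction of an ample divisor to a subvariety is ample, so $L|_V$ is ample as well. Thus it suffices to prove that $V+tL$ is ample for $t\gg 1$, which is precisely the conclusion of Theorem \ref{thm.abundance.eff} with its hypothesis that $V$ is not a prime Calabi--Yau divisor deleted. I would therefore re-run the proof of Theorem \ref{thm.abundance.eff} verbatim: that hypothesis is used only in its final step, where the argument reduces to the configuration $T=V$ with $V$ a prime Calabi--Yau divisor and $L|_V$ strictly nef but not ample. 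Hence the whole problem is to rule out this configuration, in which \eqref{eq.key.3} forces $L^4=L^3\cdot V=0$; in particular $(L|_V)^3=0$, so $L|_V$ is not big, and since a strictly nef divisor cannot be numerically trivial we have $\nu(L|_V)\in\{1,2\}$.

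The key device is to transfer the failure of ampleness on $V$ to a divisor of numerical dimension $3$ on $X$ and then invoke Theorem \ref{thm.num3}. By Proposition \ref{prop.cy}, the prime Calabi--Yau divisor $V$ is a fibre of a fibration $\pi\colon X\to C$ over a smooth curve, so $V^2\equiv 0$. For $t\gg 1$ the divisor $V+tL$ is strictly nef by Lemma \ref{nef-lem}, and using $V^2\equiv0$ together with $L^4=L^3\cdot V=0$ one computes $(V+tL)^4=0$, while for an ample divisor $A$
\[
(V+tL)^3\cdot A=t^3\,L^3\cdot A+3t^2\,(L|_V)^2\cdot(A|_V)=3t^2\,(L|_V)^2\cdot(A|_V),
\]
the term $L^3\cdot A$ vanishing because $L^4=0$ and, by Theorem \ref{thm.num3}, a strictly nef divisor cannot have numerical dimension $3$, so $\nu(L)\leq 2$. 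Consequently, if $\nu(L|_V)=2$, then $(L|_V)^2\not\equiv0$ gives $\nu(V+tL)=3$; Theorem \ref{thm.num3} then makes $V+tL$ semiample with $\kappa=\nu=3$, so the induced morphism contracts a curve and contradicts the strict nefness of $V+tL$. This disposes of the case $\nu(L|_V)=2$.

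The main obstacle is the remaining case $\nu(L|_V)=1$, i.e.\ $(L|_V)^2\equiv0$ and $\nu(V+tL)=2$; here the construction above only reaches numerical dimension $2$, Theorem \ref{thm.num3} no longer applies, and on the Calabi--Yau threefold $V$ this is exactly the case $\nu=1$ of Serrano's conjecture, which is still open. To avoid arguing on $V$, I would work on $X$ and aim to prove $\kappa(V+tL)\geq0$: then Theorem \ref{thm.abundance}, applied to the strictly nef $\mathbb Q$-Cartier divisor $V+tL$, would force $V+tL$ to be ample, contradicting $(V+tL)^4=0$ and finishing the proof. Since $V+tL$ is nef and Cartier for suitable integral $t$, Lemma \ref{lem.fc} already gives $\chi(X,m(V+tL))\geq2$ for $m\geq0$, so $\kappa(V+tL)\geq0$ would follow from the vanishing $H^i(X,m(V+tL))=0$ for $i\geq2$. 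As $V+tL$ is not big, the Kawamata--Viehweg vanishing theorem does not apply directly, and I expect this to be the genuinely hard step, requiring the non-vanishing input developed in Section \ref{sec4}: the Hirzebruch--Riemann--Roch formula combined with the pseudoeffectivity of $c_2(X)$, the hard Lefschetz theorem, and the Bedford--Taylor product of positive currents. Once $\kappa(V+tL)\geq0$ is secured, the ampleness of $V+tL$, and hence of $L|_V$, follows as in the first paragraph.
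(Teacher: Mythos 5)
Your reduction to the single configuration $T=V$ with $V$ a prime Calabi--Yau divisor and $L|_V$ strictly nef but not ample is correct, and your disposal of the subcase $\nu(L|_V)=2$ is a valid argument that is genuinely different from the paper's: you push the numerical dimension of $V+tL$ up to $3$ using $V^2\equiv 0$ and $\nu(L)\leq 2$, and then let Theorem \ref{thm.num3} contradict strict nefness. The paper never needs this case split.

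However, the subcase $\nu(L|_V)=1$ is a genuine gap: you explicitly leave the key step ($\kappa(V+tL)\geq 0$, equivalently the vanishing $H^i(X,m(V+tL))=0$ for $i\geq 2$) as a hope, and the tools you gesture at would not close it. In particular, you cannot reach for Theorem \ref{thm-num=2} or Lemma \ref{lem.pseff} here, since Lemma \ref{lem.pseff} itself invokes Corollary \ref{cor.abundance.eff}, so that route is circular. The paper's actual mechanism is different and avoids any non-vanishing statement for $V+tL$: by Proposition \ref{prop.cy} the linear system $|kV|$ gives a fibration $\pi\colon X\to C$ over a curve, and one runs the dichotomy over \emph{all} fibers $F$, not just $V$. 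If some fiber is not a multiple of a prime Calabi--Yau divisor, Theorem \ref{thm.abundance.eff} applies to it and forces $\nu(L)\geq 3$, whence $L$ is ample by Theorem \ref{thm.num3}. Otherwise every fiber is $rD$ with $D$ a prime Calabi--Yau divisor; if $\kappa(L|_D)\geq 0$ for some $D$, abundance in dimension three makes $L|_D$ ample and one concludes as before, while if $\kappa(L|_D)=-\infty$ for every $D$, the vanishing result of Lazi\'c--Oguiso--Peternell (\cite{lop}*{Corollary 3.7}, adapted to canonical singularities, using $K_D=D|_D\sim 0$) gives $H^i(D,mL|_D)=0$ for all $i\geq 0$ and $m\gg 1$; this propagates through the multiple fibers $rD$ and then through $\pi$ to yield $H^i(X,\mathcal O_X(mL))=0$ for all $i$, hence $\chi(mL)=0$, contradicting Lemma \ref{lem.fc}. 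This total-vanishing/Euler-characteristic contradiction on $L$ itself, rather than a non-vanishing statement for $V+tL$, is the missing idea in your proposal.
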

\begin{proof}
By Proposition \ref{prop.cy}, 
the divisor $V$ is semiample and $|kV|$ induces a flat fibration 
 $\pi\colon X\to C$ from $X$ onto a smooth curve $C$ for some 
 positive integer $k$. 
Let $F$ be any fiber of $\pi$.
Assume that $F$ is not a multiple of a prime Calabi--Yau divisor. 
Then, by Theorem \ref{thm.abundance.eff}, $F+tL$ is ample for $t\gg 1$. 
It follows that $L|_F\sim_{\mathbb Q}(F+tL)|_F$ is ample.
In particular, $\nu(L)\geq 3$, and so $L$ is ample by 
Theorem \ref{thm.num3}, and our conclusion automatically follows.

Thus, we assume that $F=rD$ for any fiber, where $D$ is a prime Calabi--Yau divisor and $r\geq 1$. 
If  $\kappa(L|_D)\geq 0$, then the abundance theorem in dimension three 
implies that $L|_D$ is ample.
The argument ends as above. If $\kappa(L|_D)=-\infty$ for any $D$,
then  $H^i(D, mL|_D)=0$ for any  $i\geq 0$ and $m\gg 1$ by a modification of \cite{lop}*{Corollary 3.7}
for canonical singularities. Note that $K_D=D|_D\sim 0$.
By  induction on the number $r$ and the long
cohomology sequence induced by the exact sequence
\[
0\to  \strutt_{D}(mL) \to  
\strutt_{rD}(mL)\to \strutt_{(r-1)D}(mL)\to 0,
\]
we have that $H^i(rD, mL|_{rD})=0$ for any  $i\geq 0$ and $m\gg 1$.
It follows that $R^i\pi_*\mathcal O_X(mL)=0$ and
$H^i(X, \mathcal O_X(mL))=H^i(C, \pi_*\mathcal O_X(mL))=0$ for all $i\geq 0$ and $m\gg 1$.
Then, we obtain $\chi(mL)=0$ for $m\gg 1$, which contradicts Lemma \ref{lem.fc}.
\end{proof}

The proof of Corollary \ref{cor.abundance.eff} also works when 
the Albanese map $\alpha\colon X\to A$ has the image of dimension one. 
This observation leads to the following corollary. 

\begin{cor}
Let $X$ be a projective fourfold with $\chi(\mathcal O_X)\neq 0$, and $L$ be a strictly nef $\mathbb Q$-Cartier divisor on $X$. 
Assume that the Albanese map $\alpha\colon X\to A$ has the image of dimension $\geq 1$.
Then, the divisor $K_X+tL$ is ample for $t\gg 1$.
\end{cor}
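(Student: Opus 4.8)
The plan is to mirror the proof of Corollary \ref{cor.abundance.eff}, replacing the fibration coming from a prime Calabi--Yau divisor by the one coming from the Albanese map, and replacing the K-trivial inputs (Lemma \ref{lem.fc} and Theorem \ref{thm.num3}) by the single hypothesis $\chi(\mathcal O_X)\neq 0$. First I would assume $L$ is Cartier and record that, by Lemma \ref{nef-lem} with $\Delta=0$ and $n=4$, the divisor $D:=K_X+tL$ is nef for $t\geq 8$ and strictly nef for $t>8$. Since a semiample strictly nef divisor is automatically ample (a contracted curve would violate strict nefness), and since Kawamata's basepoint-free theorem (see \cites{fujino-ok, kawamata}) applies to $D$ as soon as $2D-K_X=K_X+2tL$ is nef and big, it suffices to prove that $D$ is big for $t\gg 1$: indeed, if $D$ is big then $K_X+2tL=D+tL$ is nef and big, so $D$ is semiample and hence ample. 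I would then argue by contradiction; as bigness is upward closed in $t$, non-bigness means $(K_X+tL)^4=0$ for all $t>8$, which, read off as a polynomial identity in $t$, gives $L^4=K_X\cdot L^3=K_X^2\cdot L^2=K_X^3\cdot L=K_X^4=0$.

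Next I would build the fibration. Because $\dim\alpha(X)\geq 1$, a general one-dimensional (elliptic) quotient $A\to E$ makes the composite $X\to E$ nonconstant; its Stein factorization yields a fibration $g\colon X\to C$ onto a smooth curve $C$ of genus $\geq 1$ whose general fibre $F$ is a smooth threefold with $K_F=K_X|_F$ and with $L|_F$ strictly nef. On $F$ I would run the dichotomy of Corollary \ref{cor.abundance.eff}. If $F$ is not a Calabi--Yau threefold, then Serrano's conjecture is known in dimension three in that range (cf. \cites{serrano, ccp, oguiso}), so $(K_X+tL)|_F=K_F+tL|_F$ is ample for $t\gg 1$; hence $\nu(K_X+tL)\geq 3$, and a non-vanishing argument in the spirit of Theorem \ref{thm.num3}---now powered by $\chi(\mathcal O_X)\neq 0$ in place of Lemma \ref{lem.fc}---shows $K_X+tL$ is semiample, hence ample, contradicting $D^4=0$. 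If $F$ is Calabi--Yau with $\kappa(L|_F)\geq 0$, abundance in dimension three makes $L|_F$ ample and we conclude as before.

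The remaining case is $F$ Calabi--Yau with $\kappa(L|_F)=-\infty$ for every fibre. Here, exactly as in Corollary \ref{cor.abundance.eff}, the vanishing of \cite{lop}*{Corollary 3.7} gives $H^i(F,mL|_F)=0$ for all $i$ and $m\gg 1$, which propagates through $g$ (via $R^ig_*\mathcal O_X(mL)=0$) to $H^i(X,mL)=0$ for all $i$ and $m\gg 1$. Consequently $\chi(X,mL)=0$ for $m\gg 1$; but $\chi(X,mL)$ is a polynomial in $m$ with constant term $\chi(\mathcal O_X)$, so it would be identically zero and $\chi(\mathcal O_X)=0$, contradicting the hypothesis. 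This is precisely where $\chi(\mathcal O_X)\neq 0$ substitutes for the inequality $\chi(X,mL)\geq 2$ of Lemma \ref{lem.fc}.

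The hard part will be the upgrade in the good case from fibrewise ampleness to global bigness (equivalently semiampleness) of $K_X+tL$: fibrewise ampleness alone is insufficient, as $\mathrm{pr}^*A$ on a product $F\times E$ shows (there $D|_F$ is ample yet $D^4=0$ and $D$ is not strictly nef). One must genuinely use that $K_X+tL$ is strictly positive on horizontal curves, combined with the semipositivity of $g_*\mathcal O_X(m(K_{X/C}+tL))$ over the base (Fujita--Kawamata--Viehweg), and feed this into the $\chi(\mathcal O_X)\neq 0$ non-vanishing. The elliptic-base subcase ($\deg K_C=0$) is the most delicate, since there the base contributes no volume and the base-direction growth of sections must be extracted from strict nefness alone; a secondary technical point is justifying the reduction of an arbitrary Albanese image dimension to this curve case and checking that the Calabi--Yau vanishing propagates correctly through singular fibres.
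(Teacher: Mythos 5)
Your reduction of the whole statement to a fibration over a curve has a genuine gap at the very first step: you propose to obtain the curve by composing $\alpha$ with ``a general one-dimensional (elliptic) quotient $A\to E$.'' Such a quotient need not exist. If $A$ is a simple abelian variety of dimension $\geq 2$ (or, more generally, has no elliptic quotient), there is no surjection $A\to E$ onto an elliptic curve at all, and since $\alpha(X)$ generates $A$ one cannot replace $A$ by a smaller abelian variety to create one. So the case $\dim\alpha(X)\geq 2$ is simply not covered by your argument. The paper avoids this entirely: it takes the Stein factorization of $\alpha$ and then splits into two cases, quoting \cite{ccp}*{Corollary 3.6} to dispose of $\dim\alpha(X)\geq 2$ outright, and only in the case $\dim\alpha(X)=1$ does it use the fibration-over-a-curve argument (where the Stein factorization of $\alpha$ itself already produces the fibration $g\colon X\to C$ onto a curve of genus $\geq 1$, with no quotient of $A$ needed). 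For the one-dimensional case your argument does run parallel to the paper's: the dichotomy on the fibres as in Theorem \ref{thm.abundance.eff} and Corollary \ref{cor.abundance.eff}, the propagation of the fibrewise vanishing through $g$, and the final contradiction $\chi(X,mL)\equiv 0\Rightarrow\chi(\mathcal O_X)=0$ is exactly the paper's punchline (and your observation that $\chi(\mathcal O_X)\neq 0$ is what replaces Lemma \ref{lem.fc} is correct).

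A second, smaller issue is that even in the curve case your proposal does not actually close the ``good'' branch of the dichotomy: you candidly flag that passing from fibrewise ampleness of $(K_X+tL)|_F$ to global bigness of $K_X+tL$ is ``the hard part'' and leave it open, whereas a complete proof must resolve it (the paper resolves it by re-running the arguments of Theorem \ref{thm.abundance.eff} and Corollary \ref{cor.abundance.eff}, where fibrewise ampleness feeds back into the non-vanishing machinery rather than being used to deduce bigness directly). As written, then, the proposal both misses the higher-dimensional Albanese image case and leaves the key analytic step of the one-dimensional case unproved.
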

\begin{proof}
Taking the Stein factorization of $\alpha\colon X\to A$, 
we may assume that $\alpha$ has connected fibers and the image $\alpha(X)$ is a normal variety.
In the case $\dim \alpha(X)\geq 2$,  the conclusion follows directly from \cite{ccp}*{Corollary 3.6}. 
In the case $\dim \alpha(X)=1$,  
the same argument as in Theorem \ref{thm.abundance.eff} and Corollary \ref{cor.abundance.eff} 
shows that  $\chi(mL)=0$ for $m\gg 1$,
contradicting the assumption that $\chi(\mathcal O_X)\neq 0$.
\end{proof}

\subsection{The case of $\nu(L)\leq 2$}
In the case $\nu(L)=1$, the non-vanishing follows directly from \cite{lp}*{Theorems 8.9, 8.10}.
In this subsection, 
we provide a different approach to the non-vanishing of strictly nef divisors, 
including the case  $\nu(L)=2$.
\begin{lem}\label{lem.pseff}
Let $L$ be a strictly nef divisor on a K-trivial fourfold $X$. 
If $\kappa(L)=-\infty$,  
then the divisor $L-D$ is not pseudoeffective for any nonzero effective divisor $D$ on $X$.
\end{lem}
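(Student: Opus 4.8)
The plan is to argue by contradiction: assume that $L-D$ is pseudoeffective for some nonzero effective divisor $D$, and then deduce that $L$ must be big, contradicting the hypothesis $\kappa(L)=-\infty$.

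The key input is that $V+tL$ is ample for $t\gg 1$ for \emph{every} prime divisor $V$ on $X$: Theorem \ref{thm.abundance.eff} handles the case where $V$ is not a prime Calabi--Yau divisor, and Corollary \ref{cor.abundance.eff} handles the remaining prime Calabi--Yau case. First I would promote this from prime divisors to the effective divisor $D$. Writing $D=\sum_i a_iV_i$ with $a_i>0$ and $V_i$ prime, I have
\[
D+tL=\sum_i a_i\Bigl(V_i+\frac{t}{\sum_j a_j}L\Bigr),
\]
so once $t$ is large enough that each summand $V_i+\frac{t}{\sum_j a_j}L$ is ample, the divisor $D+tL$ is ample as well, being a positive linear combination of ample classes.

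The decisive step is then the elementary identity
\[
(1+t)L=(L-D)+(D+tL).
\]
The first summand on the right is pseudoeffective by assumption, and the second is ample by the previous step; hence their sum lies in the interior of the pseudoeffective cone, i.e.\ $(1+t)L$ is big. Since $t>0$, it follows that $L$ itself is big, so $\kappa(L)=\dim X=4$, contradicting $\kappa(L)=-\infty$. This yields the assertion that $L-D$ cannot be pseudoeffective.

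The only genuinely substantial point is the very first one---that $V+tL$, and therefore $D+tL$, becomes ample for large $t$---but this is precisely what the results of Section \ref{sec3} already establish (using the strict nefness of $L$, which is inherited here). Once that is in hand, the ``pseudoeffective $+$ ample $=$ big'' principle closes the argument at once, and I expect no further obstacle.
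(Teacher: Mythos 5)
Your proposal is correct, and it shares the paper's essential first step --- combining Theorem \ref{thm.abundance.eff} and Corollary \ref{cor.abundance.eff} to make $H:=D+tL$ ample for $t\gg 1$ (the paper asserts this for the effective divisor $D$ directly; your explicit reduction to prime components is the right justification) --- but it concludes by a genuinely different route. The paper sets $E:=L-D$, observes $L^4=0$ (forced by $\kappa(L)=-\infty$ and nefness), and runs a cascade of intersection computations starting from $0=(t+1)L^4=L^3\cdot(E+H)\geq 0$ to deduce successively $L^3\cdot H=0$, $L^2\cdot H^2=0$, and finally $L\cdot H^3=0$; since $H^3$ is represented by an effective curve, this contradicts the \emph{strict nefness} of $L$. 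You instead invoke the identity $(1+t)L=(L-D)+(D+tL)$ and the standard fact that a pseudoeffective class plus an ample class lies in the interior of the pseudoeffective cone, so $L$ is big and $\kappa(L)=4$, contradicting $\kappa(L)=-\infty$ \emph{directly}. Both arguments are valid and rest on the same substantial input; yours is shorter, avoids the auxiliary observation $L^4=0$, and does not need to reuse strict nefness at the final step, while the paper's version follows the intersection-theoretic template it employs repeatedly elsewhere (e.g.\ in Theorems \ref{surf-big} and \ref{thm.abundance}) and incidentally records the vanishing of all the mixed intersection numbers, though none of that extra information is used afterwards.
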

\begin{proof}
Assume that there exists a nonzero effective divisor $D$ such that $E:=L-D$ is  pseudoeffective. 
By Theorem \ref{thm.abundance.eff} and Corollary \ref{cor.abundance.eff}, 
the divisor $H:=D+tL$ is ample for $t\gg 1$. 
Since $E$ is pseudoeffective and $H$ is ample, 
we have that 
\begin{equation*}
\begin{split}
&L^3\cdot E\geq 0, \quad L^2\cdot  H \cdot E\geq 0, \quad  L\cdot  H^2 \cdot  E\geq 0,
\\ 
&L^3 \cdot H\geq 0, \quad  L^2 \cdot  H^2\geq 0, \quad  L\cdot  H^3\geq 0.
\end{split}
\end{equation*}
We now prove that all the above intersection numbers actually are zero, 
by using 
$$
(t+1)L=E+D+tL=E+H \text{ and } L^4=0. 
$$ 
From $0=(t+1) L^4=L^3 \cdot (E+H)\geq 0$, we obtain $L^3 \cdot E=L^3 \cdot H=0$. 
Then, we have $0=(t+1)L^3 \cdot H=L^2\cdot(E+H)\cdot H\geq 0$,  
and so we obtain $L^2\cdot H \cdot E=L^2 \cdot H^2=0$.
Similarly, 
from $$
0=(t+1)L^2 \cdot H^2=L\cdot (E+H) \cdot H^2=L\cdot H^2 \cdot E+L\cdot H^3\geq 0,
$$
we finally obtain $L\cdot H^2 \cdot E=L \cdot H^3=0$.
The numerical class $H^3$ can be represented by an effective curve since $H$ is ample, 
and thus $L \cdot H^3=0$ contradicts the strict nefness of $L$. 
\end{proof}

From now on, we will freely use the basic notation of the analytic methods in \cite{demailly-book} 
and interchangeably use the terms ``Cartier divisors'', ``invertible sheaves'', and ``line bundles''.

\begin{lem}\label{lem.partial.amp}
Let $L$ be a nef  divisor on a K-trivial fourfold  $X$.
Assume that $L$ admits a singular Hermitian metric $h$ with semipositive curvature current 
such that the closed subschemes $V_m$ defined by multiplier ideal sheaves  $\mathcal I(h^{\otimes m})$ 
are of dimension $ \leq 1$ for any  $m \gg 1$. 
Then, we have $\kappa(L)\geq 0$. 
\end{lem}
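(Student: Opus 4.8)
The plan is to produce a nonzero section of a multiple of $L$ by combining the Hirzebruch--Riemann--Roch estimate of Lemma \ref{lem.fc} with a boundedness statement for the higher cohomology of $mL$. Since $\chi(X, mL) \geq 2$ for all $m \geq 0$, once the terms $h^i(X, \mathcal O_X(mL))$ with $i \geq 2$ are shown to be sufficiently small for some large $m$, the Euler characteristic forces $h^0(X, \mathcal O_X(mL)) > 0$. I would argue by contradiction, assuming $\kappa(L) = -\infty$, so that $H^0(X, \mathcal O_X(mL)) = 0$ for every $m \geq 1$.

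First I would exploit the hypothesis on the multiplier ideals through the ideal-sheaf sequence
\[
0 \to \mathcal O_X(mL) \otimes \I{h^{\otimes m}} \to \mathcal O_X(mL) \to \mathcal O_{V_m}(mL) \to 0.
\]
Because $\dim V_m \leq 1$, we have $H^i(V_m, \mathcal O_{V_m}(mL)) = 0$ for $i \geq 2$; hence $H^i(X, \mathcal O_X(mL)) \cong H^i(X, \mathcal O_X(mL) \otimes \I{h^{\otimes m}})$ for $i \geq 3$, while $H^2(X, \mathcal O_X(mL))$ is a quotient of $H^2(X, \mathcal O_X(mL) \otimes \I{h^{\otimes m}})$. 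Thus it suffices to control the higher cohomology of the twisted ideal $\mathcal O_X(mL)\otimes\I{h^{\otimes m}}$.

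Here the crucial input is the hard Lefschetz theorem with multiplier ideals of Demailly--Peternell--Schneider, applicable precisely because $h^{\otimes m}$ has semipositive curvature (so no bigness of $L$ is needed, unlike in Kawamata--Viehweg vanishing). Using $K_X \sim 0$, it yields surjections
\[
H^0(X, \Omega^{4-i}_X \otimes \mathcal O_X(mL) \otimes \I{h^{\otimes m}}) \twoheadrightarrow H^i(X, \mathcal O_X(mL) \otimes \I{h^{\otimes m}}),
\]
so each higher cohomology group is dominated by global $mL$-valued holomorphic forms lying in the multiplier ideal. For $i = 4$ the source is $H^0(X, \mathcal O_X(mL)\otimes\I{h^{\otimes m}}) \subseteq H^0(X, \mathcal O_X(mL)) = 0$, giving $H^4(X, \mathcal O_X(mL)) = 0$. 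Since $h^1$ and $h^3$ enter the Euler characteristic with the favorable sign, one then gets $\chi(X, mL) \leq h^2(X, \mathcal O_X(mL))$, and the whole argument collapses to bounding $h^2(X, \mathcal O_X(mL)) \leq h^0(X, \Omega^2_X \otimes \mathcal O_X(mL)\otimes\I{h^{\otimes m}})$.

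The main obstacle is this bound on the $\Omega^2$-term. The key observation I would use is that the wedge pairing $\Omega^2_X \otimes \Omega^2_X \to \Omega^4_X = K_X \simeq \mathcal O_X$ sends a product of two such sections into $H^0(X, \mathcal O_X(2mL)\otimes\I{h^{\otimes 2m}}) \subseteq H^0(X, \mathcal O_X(2mL)) = 0$, and likewise pairs them to zero against the untwisted forms in $H^0(X, \Omega^2_X)$, a space of dimension $h^2(\mathcal O_X)$. Pointwise this forces the image of $H^0(X, \Omega^2_X \otimes \mathcal O_X(mL)\otimes\I{h^{\otimes m}})$ in $\wedge^2 T^*_{X,x}$ to be isotropic for the nondegenerate wedge form and orthogonal to the global forms, which caps its dimension. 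Pushing this linear-algebra bound to $h^2(X, \mathcal O_X(mL)) < 2 + h^2(\mathcal O_X) = \chi(\mathcal O_X) \leq \chi(X, mL)$ --- treating the Calabi--Yau case (where $h^2(\mathcal O_X)=0$) and the hyperk\"ahler case (where the holomorphic symplectic form makes the orthogonality constraint effective) according to the Beauville--Bogomolov--Yau structure --- is the delicate point and yields the desired contradiction, hence $\kappa(L)\geq 0$.
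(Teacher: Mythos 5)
Your setup coincides with the paper's: the ideal-sheaf sequence, the Demailly--Peternell--Schneider hard Lefschetz surjection
\[
H^0(X, \Omega^{4-i}_X(mL)\otimes \mathcal I(h^{\otimes m}))
\twoheadrightarrow H^i(X, \mathcal O_X(mL)\otimes \mathcal I(h^{\otimes m})),
\]
and the reduction of everything to controlling $H^0(X, \Omega^2_X(mL)\otimes\mathcal I(h^{\otimes m}))$ are exactly the steps of the printed proof. The divergence --- and the gap --- is in how that last group is handled. The paper does not attempt to bound its dimension at all: it invokes \cite{lp}*{Theorem~8.1}, which shows that for a nef divisor $L$ on such an $X$ the nonvanishing of $H^0(X,\Omega^2_X(mL))$ already forces $\kappa(L)\geq 0$; hence under the contradiction hypothesis $\kappa(L)=-\infty$ one may assume this group is zero, whence $H^2(X,\mathcal O_X(mL))=0$ and $h^0=\chi+h^1+h^3\geq 2$ gives the contradiction. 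Your attempt to replace this external input by a wedge-pairing argument is where the proof breaks.

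Two concrete problems. First, the isotropy of the image of the evaluation map $H^0(X,\Omega^2_X(mL)\otimes\mathcal I(h^{\otimes m}))\to \bigwedge^2 T^*_{X,x}$ only bounds the \emph{pointwise rank} of the family of sections, not the dimension of the space of global sections: sections of a line subsheaf of $\Omega^2_X(mL)$ have one-dimensional image at every point while $h^0$ can be arbitrarily large. Upgrading a rank bound on the subsheaf generated by these forms to a bound on $h^0$ is precisely the hard content of \cite{lp} (pseudoeffectivity of quotients of $\Omega^q_X$, the induced foliations, etc.), which you would be silently re-proving. Second, even the pointwise bound is too weak in the Calabi--Yau case: there $h^2(\mathcal O_X)=0$, so $\chi(\mathcal O_X)=2$ and your scheme requires $h^0(X,\Omega^2_X(mL)\otimes\mathcal I(h^{\otimes m}))\leq 1$, whereas a totally isotropic subspace of $\bigl(\bigwedge^2 T^*_{X,x},\wedge\bigr)$ can be $3$-dimensional (e.g.\ the span of $e_1\wedge e_2$, $e_1\wedge e_3$, $e_1\wedge e_4$) and there is no global $2$-form available for an orthogonality constraint. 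So the linear algebra cannot close the argument; the step must instead be carried by \cite{lp}*{Theorem~8.1} (or an equivalent result), as in the paper.
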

\begin{proof}
The proof is based on the strategy of \cite{lop}*{Section 3}.
For a contradiction, we assume that $\kappa(L)=-\infty$. 
Note that it is possible that  $V_m=\emptyset$.

By the hard Lefschetz theorem \cite{dps}*{Theorem 0.1},
the morphism
\[
H^0(X, \Omega^2_X(mL)\otimes \mathcal I(h^{\otimes m}))
\twoheadrightarrow H^2(X, \strutt_X(mL)\otimes \mathcal I(h^{\otimes m}))
\]
is surjective. 
Furthermore, we may assume that $H^0(X, \Omega^2_X(mL))=0$ for any $m \gg 1$
from \cite{lp}*{Theorem~8.1}. 
Therefore, we can conclude that $H^2(X, \strutt_X(mL)\otimes \mathcal I(h^{\otimes m}))=0$ 
for any $m \gg 1$. 
On the other hand, we can see that
$H^2(V_m, mL)=0$ for any $m \gg 1$ by assumption. 
Then, from the long cohomology sequence induced by the exact sequence
\[
0\to \strutt_X(mL)\otimes \mathcal I(h^{\otimes m}) \to  \strutt_X(mL) \to \strutt_{V_m}(mL)\to 0,
\]
we can see that $H^2(X,  \strutt_X(mL))=0$ for any $m \gg 1$.
This indicates that 
$$h^0(X, \mathcal O_X(mL))=\chi(X, mL)+h^1(X, \mathcal O_X(mL))+h^3(X, \mathcal O_X(mL))$$ 
for any $m \gg 1$. 
The right-hand side is positive from Lemma \ref{lem.fc}, 
contradicting the assumption $\kappa(L)=-\infty$.
\end{proof}

\begin{thm}\label{thm-num=2}
Let $L$ be a strictly nef  divisor on a K-trivial fourfold  $X$  
with $\nu(L)\leq 2$.
Then, we have $\kappa(L) \geq 0$.  
\end{thm}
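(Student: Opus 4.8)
The plan is to argue by contradiction and reduce everything to Lemma \ref{lem.partial.amp}. Assume that $\kappa(L)=-\infty$; I will produce a singular Hermitian metric on $L$ whose multiplier-ideal subschemes have dimension $\le 1$, which by Lemma \ref{lem.partial.amp} forces $\kappa(L)\ge 0$ and yields the desired contradiction. Since $L$ is nef it is pseudoeffective, so I first fix a closed positive $(1,1)$-current $T\in c_1(L)$ with minimal singularities, say $T=\Theta_h(L)$ for a singular Hermitian metric $h$ with semipositive curvature current.

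Second, I would analyze the Siu decomposition $T=\sum_j\lambda_j[D_j]+R$, where $\lambda_j$ are the generic Lelong numbers along the prime divisors $D_j$ and $R$ is the residual positive current whose upper level sets have codimension $\ge 2$. The effective divisorial part $\Theta:=\sum_j\lambda_j D_j$ satisfies that $\{R\}=L-\Theta$ is pseudoeffective; hence, if $\Theta\ne 0$, then $L-\varepsilon D_{j_0}$ is pseudoeffective for a small rational $\varepsilon>0$ and some component $D_{j_0}$, contradicting Lemma \ref{lem.pseff}. Therefore $\Theta=0$, so the Lelong upper level sets $E_c(T):=\{x:\nu(T,x)\ge c\}$ have codimension $\ge 2$, i.e. dimension $\le 2$, for every $c>0$.

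The main obstacle is to rule out two-dimensional level sets, and this is exactly where the hypothesis $\nu(L)\le 2$ enters. Suppose $E_c(T)$ had an irreducible component $Z$ of dimension $2$ for some $c>0$. Using the Bedford--Taylor self-intersection $\langle T^2\rangle$ together with Demailly's inequality $\nu(\langle T^2\rangle,x)\ge \nu(T,x)^2\ge c^2$ along $Z$ and the Siu decomposition of the closed positive $(2,2)$-current $\langle T^2\rangle$, I expect to obtain $\langle T^2\rangle\ge c^2[Z]$; passing to cohomology gives $\langle L^2\rangle=L^2\ge c^2\{Z\}$ as pseudoeffective $(2,2)$-classes. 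Capping with the nef class $L$ and using $\nu(L)\le 2$ (so $L^3=0$) then yields $0=L\cdot\langle L^2\rangle\ge c^2\,L\cdot\{Z\}\ge 0$, whence $L\cdot\{Z\}=0$ in $H^{3,3}(X,\mathbb R)$. On the other hand, for an ample class $A$ one has $A\cdot L\cdot\{Z\}=(L|_Z)\cdot(A|_Z)>0$, because $L$ is strictly nef and $A|_Z$ is ample on the surface $Z$. This contradiction shows that $\dim E_c(T)\le 1$ for all $c>0$. The technical heart here is making $\langle T^2\rangle$ and the Lelong-number/Siu comparison rigorous when the potentials of $T$ are not locally bounded, which I would handle via Demailly regularization or the non-pluripolar product of \cite{bdpp}; verifying the class identity $\langle L^2\rangle=L^2$ for the nef class $L$ is also needed.

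Finally, by Skoda's integrability theorem the cosupport $V_m$ of $\mathcal I(h^{\otimes m})$ is contained in $\{x:\nu(T,x)\ge 1/m\}=E_{1/m}(T)$, which therefore has dimension $\le 1$ for every $m\gg 1$. Thus the metric $h$ satisfies the hypotheses of Lemma \ref{lem.partial.amp}, giving $\kappa(L)\ge 0$ and contradicting the assumption $\kappa(L)=-\infty$. The case $\nu(L)=1$ can alternatively be quoted from \cite{lp}*{Theorems 8.9, 8.10}, but the argument above treats $\nu(L)\le 2$ uniformly, since when $\nu(L)=1$ the class $\langle L^2\rangle=L^2$ vanishes and the same computation already forces $E_c(T)=\emptyset$.
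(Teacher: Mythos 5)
Your proposal follows essentially the same route as the paper: eliminate the divisorial part of the Siu decomposition via Lemma \ref{lem.pseff}, use a (regularized) Bedford--Taylor square of the residual current to show that $L$ is numerically trivial on any two-dimensional Lelong upper-level set --- contradicting strict nefness --- and then conclude via Skoda's lemma and Lemma \ref{lem.partial.amp}; the step you flag as the technical heart is carried out in the paper precisely by Demailly's approximation theorem (currents $R_{c,\varepsilon}$ with controlled negative part and Lelong numbers) followed by weak limits of $R_{c,\varepsilon}^2$ as $\varepsilon, c \to 0$. One caution: of your two suggested fixes, only the regularization route works --- the non-pluripolar product by construction puts no mass on the pluripolar set $Z$, so it can never produce the needed inequality $\langle T^2\rangle \geq c^2 [Z]$.
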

\begin{proof}
For a contradiction, we assume that $\kappa(L)=-\infty$. 
Let $h$ be a singular Hermitian metric on $\strutt_X(L)$
such that the curvature current $\sqrt{-1}\Theta_{h}(L)$ is semipositive. 
Let us first consider the Siu decomposition of $\sqrt{-1}\Theta_{h}(L)$: 
\[
\sqrt{-1}\Theta_{h}(L) = R + [D]. 
\]
Here $[D]$ is the integration current defined by 
$D=\sum_{D_{i}}{\nu(\sqrt{-1}\Theta_{h}(L), D_i)}D_{i}$, 
where $D_i$ runs through the prime divisors on $X$ and  
$$\nu(\sqrt{-1}\Theta_{h}(L), D_i):= \inf_{x \in D_{i}} \nu(\sqrt{-1}\Theta_{h}(L), x)$$ 
is the Lelong number of $\sqrt{-1}\Theta_{h}(L)$ along $D_{i}$. 
If $[D]$ is nonzero, then
$mL - D_{i}$ is pseudoeffective 
for any irreducible component $D_{i}$ of $D$ and $m \gg 1$. 
This contradicts Lemma \ref{lem.pseff}, and so $[D]=0$. 


We now demonstrate that the Lelong number $\nu(R, S):=\inf_{x \in S}\nu(R, x)$ along $S$ 
is zero for any subvariety $S \subset X$ of codimension two. 
When $R$ has a local weight whose unbounded locus is contained in a Zariski closed subset of codimension $\geq 2$, 
the Bedford--Taylor product $R^{2}$ can be defined as a positive $(2,2)$-current 
representing  the numerical class $L^{2} \in H^{2,2}(X, \mathbb{R})$ (for example, see \cite{dem}*{Lemma 7.4}). 
However, the $(1,1)$-current $R$ does not necessarily  have such a local weight; therefore, 
we apply the approximation theorem \cite{dem}*{Main Theorem 1.1} for $R$. 
Then, for any $c>0$ and $\e>0$, we can take a $(1,1)$-current $R_{c,\e}$ with the following properties:
\begin{itemize}
\item[(a)] $R_{c,\e}$ represents the numerical class $L \in H^{1,1}(X, \mathbb{R})$. 
\item[(b)] $R_{c,\e}$ is smooth on $X \setminus E_{c}(R)$. 
\item[(c)] $R_{c,\e}\geq - (\min{\{\lambda_\e, c\}} + \e) \omega$. 
\item[(d)] $\nu(R_{c,\e},x)=\max{\{\nu(R,x)-c, 0\}}$. 
\end{itemize}
Here $E_{c}(R):=\{x \in X \,|\, \nu(R, x) \geq c\}$,
$\omega$ is a fixed K\"ahler form on $X$,
and $ \lambda_\e$ is a continuous positive-valued function on $X$. 
It follows from $[D]=0$ that $E_{c}(R)$ is a Zariski closed subset of codimension $\geq 2$.
Hence, properties (a) and (b) enable us to define the Bedford--Taylor product 
$R_{c,\e}^2=R_{c,\e}\wedge R_{c,\e}$ representing  the numerical class $L^{2} \in H^{2,2}(X, \mathbb{R})$.

We now consider the Siu decomposition of $R_{c,\e}^2 $: 
\[
R_{c,\e}^2 = T_{c,\e} + [S_{c,\e}], 
\]
where $[S_{c,\e}]$ is the integration current defined by 
$S_{c,\e}=\sum_{S_{i}}{\nu(R_{c,\e}^2, S_i)}S_{i}$. 
From \cite{dem}*{(7.5)} and property (d), we obtain 
\begin{align}\label{lelong}
[S_{c,\e}]=\sum_{S_{i}}{\nu(R_{c,\e}^2, S_i)} [S_{i}]
\geq \sum_{S_{i}}({\max{\{\nu(R,S_i)-c, 0\}}})^2 [S_{i}]. 
\end{align}
The integrations of $T_{c,\e}\wedge \omega^2$  and $S_{c,\e} \wedge \omega^2$ are uniformly bounded; 
hence, by the weak compactness of currents, 
we can choose subsequences weakly converging to some $(2,2)$-currents $T$ and $S$, respectively,  
as $\e \to 0$ and $c \to 0$. 
It follows from property (c) that the weak limits $T$ and $S$ are positive $(2,2)$-currents. 
Furthermore, the Lelong numbers are nondecreasing under the operator of taking weak limits; 
hence, from inequality \eqref{lelong}, we obtain 
\begin{align}\label{lelong2}
L^{2} \ni T + S \geq T  + \sum_{S_{i}}\nu(R,S_i)^{2} [S_{i}]. 
\end{align}

We will deduce that $L|_{S_i}$ is numerically zero for any $S_i$ with $\nu(R,S_i)>0$ from \eqref{lelong2}. 
Taking the wedge products of $\{\omega\}$ and the numerical class $L$ yields 
that 
$$
H^{4,4}(X, \mathbb{R}) \ni 0=L^{3} \cdot \{\omega\} 
\geq  \{T\} \cdot L  \cdot  \{\omega\} + \{\sum_{S_{i}}\nu(R,S_i)^{2} [S_{i}] \}\cdot L  \cdot  \{\omega\},   
$$
where $\{\bullet \}$ denotes the the numerical class of $(1,1)$-currents. 
The equality on the left-hand side follows from the assumption $\nu(L)\leq 2$. 
Then, since $L$ is nef and $T$ is a positive $(2,2)$-current, 
we can conclude that each term on  the right-hand side  is the zero class. 
 In particular, we can see that $\{S\} \cdot L=0$ 
for any subvariety $S$ satisfying $\codim S =2$ and $\nu(R, S) >0$. 
On the other hand, from the strict nefness, 
we have that $C \cdot  L >0$ 
for a curve $C:=S \cap H$, where $H$ is a very ample hypersurface. 
This contradicts $\{S\} \cdot L=0$,
and so the upper-level set of Lelong numbers 
$E_{c}(\sqrt{-1}\Theta_h(L)):=\{x \in X \,|\, \nu(\sqrt{-1}\Theta_h(L), x) \geq c\}$ is a (possibly) 
countable union of Zariski closed subsets of codimension $\geq 3$. 
Then, by Skoda's lemma, we can see that 
$V_m$ in Lemma \ref{lem.partial.amp} is of codimension $\geq 3$, 
and thus we obtain the conclusion  $\kappa(L) \geq 0$. 
\end{proof}

At the end of this paper, apart from strictly nef divisors, 
we will examine how close our method is 
from the following non-vanishing conjecture for hyperk\"ahler manifolds.

\begin{conj}[Non-vanishing conjecture for hyperk\"ahker manifolds]\label{SYZ}
Let $X$ be a $($not necessarily projective$)$ hyperk\"ahker manifold. 
Let $L$ be a nef divisor on $X$ with $q_{X}(L,L)=0$, 
where $q_X(\cdot, \cdot)$ denotes the Bogomolov--Beauville--Fujiki form on $X$. 
Then, we have $\kappa(L) \geq 0$. 
\end{conj}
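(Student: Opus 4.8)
The plan is to run the analytic machinery of Theorem~\ref{thm-num=2} and Lemma~\ref{lem.partial.amp} in the $2n$-dimensional holomorphic symplectic setting, with the Bogomolov--Beauville--Fujiki form playing the role that the strict nefness and the four-dimensional formula \eqref{eq.hrr.1} played before; since the whole approach is analytic, it adapts to the (possibly non-projective) compact K\"ahler setting of the conjecture. If $L\equiv 0$ then $L$ is trivial, as $h^1(X,\mathcal{O}_X)=0$, and $\kappa(L)=0$; so assume $L\not\equiv0$ and, for a contradiction, that $\kappa(L)=-\infty$. As $L$ is nef, it carries a singular Hermitian metric $h$ with semipositive curvature current $\sqrt{-1}\Theta_h(L)$. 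The key numerical input is that, by the Fujiki relation $L^{2n}=c_X\,q_X(L,L)^n$, the hypothesis $q_X(L,L)=0$ gives $L^{2n}=0$; more strongly, by the Hirzebruch--Riemann--Roch formula for irreducible symplectic manifolds, $\chi(X,mL)$ depends only on $q_X(mL,mL)=m^2q_X(L,L)=0$, so that
\[
\chi(X,mL)=\chi(X,\mathcal{O}_X)=\sum_{k=0}^{n}h^{2k,0}(X)=n+1>0\qquad\text{for all } m.
\]
This is the exact analogue of Lemma~\ref{lem.fc}.

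With this in hand it suffices to kill the even-degree cohomology $H^{2k}(X,mL)$ for $1\le k\le n-1$ (the top one satisfies $H^{2n}(X,mL)\cong H^0(X,-mL)^{\vee}=0$ since $-mL$ is not pseudoeffective), because then
\[
h^0(X,mL)=\chi(X,mL)+\sum_{q\ \mathrm{odd}}h^q(X,mL)\geq \chi(X,mL)=n+1>0,
\]
contradicting $\kappa(L)=-\infty$. By the hard Lefschetz theorem \cite{dps}*{Theorem~0.1}, $H^{2k}(X,\mathcal{O}_X(mL)\otimes\mathcal{I}(h^{\otimes m}))$ is a quotient of $H^0(X,\Omega^{2n-2k}_X(mL)\otimes\mathcal{I}(h^{\otimes m}))$; writing $V_m$ for the subscheme cut out by $\mathcal{I}(h^{\otimes m})$ and using the sequence $0\to\mathcal{O}_X(mL)\otimes\mathcal{I}(h^{\otimes m})\to\mathcal{O}_X(mL)\to\mathcal{O}_{V_m}(mL)\to0$, the two inputs needed are (i) an \cite{lp}-type vanishing $H^0(X,\Omega^p_X(mL))=0$ for even $p$ with $2\le p\le 2n-2$ and $m\gg1$, and (ii) the bound $\dim V_m\le 1$, which forces $H^{2k}(V_m,mL)=0$ for every $k\ge1$.

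To obtain (ii) I would imitate Theorem~\ref{thm-num=2}. Siu-decompose $\sqrt{-1}\Theta_h(L)=R+[D]$ and first show $[D]=0$; in the strictly nef case this was Lemma~\ref{lem.pseff}, and here I would instead argue that a prime component $D_i$ of $D$ with $mL-D_i$ pseudoeffective would violate the positivity of $q_X$ on the boundary of the positive cone (in the spirit of Remark~\ref{rem.hk} and the Hodge index theorem of \cite{verbitsky}). Granting $[D]=0$, apply Demailly's approximation \cite{dem}*{Main Theorem~1.1} to produce currents $R_{c,\e}$ with properties (a)--(d) and form the iterated Bedford--Taylor products $R_{c,\e}^{k}$ for $2\le k\le 2n-2$, each representing $L^k$. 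The relation $L^{n+1}=0$ (a consequence of $q_X(L,L)=0$ via the Fujiki relation and the structure of the subalgebra generated by $H^2$) yields $L^{2n-1}=0$, hence $L^{2n-1}\cdot\{\omega\}=0$; feeding this into the Siu decomposition of $R_{c,\e}^{k}$ and passing to weak limits forces $\{S_i\}\cdot L^{2n-k-1}\cdot\{\omega\}=0$ for every codimension-$k$ component $S_i$ with $\nu(R,S_i)>0$. The aim is to upgrade these equalities to $\nu(R,S_i)=0$ for all $S_i$ of dimension $\ge2$, so that Skoda's lemma gives $\dim V_m\le1$.

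The main obstacle is exactly this upgrade, and it is where strict nefness was decisive. In Theorem~\ref{thm-num=2} the vanishing $\{S_i\}\cdot L\cdot\{\omega\}=0$ was converted into a contradiction through $C\cdot L>0$ for the curve $C=S_i\cap H$; for a merely nef $L$ the corresponding intersection is only $\ge0$, so equality carries no contradiction and the Lelong numbers along the $(2n-2)$-dimensional and other high-dimensional subvarieties cannot be forced to vanish. Consequently $\dim V_m\le1$ is out of reach by soft positivity alone. A secondary difficulty is input (i): the vanishing $H^0(X,\Omega^p_X(mL))=0$ is genuinely delicate for \emph{even} $p$, precisely the degrees where the holomorphic symplectic powers $\sigma^{p/2}\in H^0(X,\Omega^p_X)$ live, and these are exactly the degrees the hard Lefschetz reduction requires. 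I expect that overcoming these points needs substantively new geometric input---presumably the fine structure of the Bogomolov--Beauville--Fujiki form and the conjectural Lagrangian (SYZ) fibration attached to $L$---rather than the strict nefness that drove the bounded case, which is why the present method can only \emph{come close} to Conjecture~\ref{SYZ}.
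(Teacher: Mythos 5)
The statement you were asked to prove is Conjecture \ref{SYZ}; the paper does not prove it and explicitly leaves it open, offering only the partial results Theorem \ref{main2} (a dichotomy for hyperk\"ahler \emph{fourfolds}: either $\kappa(L)\geq 0$ or there is a holomorphic Lagrangian subvariety $S$ with $L|_S\equiv 0$) and Theorem \ref{main1} (a reduction to algebraic dimension zero via deformation of the pair $(X,L)$). Your proposal is therefore correctly \emph{not} a proof, and to your credit you say so. What you outline is essentially the paper's own route to Theorem \ref{main2}, and you locate the genuine obstruction in exactly the right place: without strict nefness, the vanishing $\{S_i\}\cdot L\cdot\{\omega\}=0$ for a codimension-two component with positive Lelong number cannot be turned into a contradiction. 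The paper's move at that point is not to fight the obstruction but to package it: using $q_X(L,L)=0$ and inequality \eqref{lelong2} one gets $\int_X c_1(L)^2\wedge\sigma\wedge\overline{\sigma}=0$, hence $\nu(L,S)^2\int_X[S]\wedge\sigma\wedge\overline{\sigma}=0$, i.e.\ $\sigma|_{S_{\reg}}=0$ and $S$ is Lagrangian --- which is precisely alternative (2) of Theorem \ref{main2}. That is all the method yields, and it is why the paper stops at a dichotomy in dimension four rather than attempting your general-$2n$ induction through all even cohomological degrees (where, as you note, the hard Lefschetz reduction would require $H^0(X,\Omega^p_X(mL))=0$ for all even $p$, which is not available).

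Two smaller points of divergence from the paper's treatment are worth flagging. First, for the divisorial part of the Siu decomposition you propose a Lemma \ref{lem.pseff}--style pseudoeffectivity argument; the paper instead takes $h$ with \emph{minimal singularities}, proves via Lemma \ref{lem-min} that the classes of the components of $D$ are linearly independent (so $D$ is an $\mathbb R$-divisor), and then uses the Bogomolov--Beauville--Fujiki form and the Hodge index theorem to conclude that either $D=0$ or $L$ is numerically proportional to the effective divisor $D$, in which case $\kappa(L)\geq 0$ follows at once --- this is cleaner than your sketch and works without projectivity. Second, your claim that $\chi(X,mL)=\chi(X,\mathcal O_X)=n+1$ via the Riemann--Roch polynomial in $q_X$ is the correct generalization of Lemma \ref{lem.fc}, but the paper only needs (and only uses) the fourfold case, where Lemma \ref{lem.partial.amp} applies directly. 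In summary: your proposal matches the paper's strategy for its partial results and correctly diagnoses why the conjecture itself remains out of reach; it should be read as a reconstruction of Theorem \ref{main2}, not of Conjecture \ref{SYZ}.
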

Applying the same arguments as in the proof of Theorem \ref{thm-num=2} to Conjecture \ref{SYZ}, 
we obtain the following partial result: 

\begin{thm}\label{main2}
Let $X$ be a  $($not necessarily projective$)$ hyperk\"ahker fourfold  
and $L$ be a nef divisor on $X$ with $q_{X}(L,L)=0$.
Then, one of the followings holds:
\begin{itemize}
\item[$(1)$] $\kappa(L) \geq 0$. 
\item[$(2)$] There exists a holomorphic Lagrangian subvariety $S$ 
such that $L|_{S} \equiv 0$. 
\end{itemize}
\end{thm}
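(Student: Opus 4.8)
The plan is to mimic the proof of Theorem \ref{thm-num=2}, with one decisive change: the step in which strict nefness is used to rule out a codimension-two subvariety $S$ with $L|_{S}\equiv 0$ is no longer available, and it is exactly this locus that, in the nef hyperkähler setting, turns out to be Lagrangian and produces alternative $(2)$. So assume $\kappa(L)=-\infty$ (otherwise $(1)$ holds) and let us produce a subvariety as in $(2)$. The Fujiki relation applied to $L,L,L,\omega$ together with $q_{X}(L,L)=0$ gives $L^{3}\cdot\{\omega\}=0$ (indeed $L^{3}=0$ in $H^{6}(X,\mathbb R)$), so $q_{X}(L,L)=0$ plays precisely the role of the hypothesis $\nu(L)\leq 2$ in Theorem \ref{thm-num=2}. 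Let $\sigma$ denote the holomorphic symplectic $(2,0)$-form. First I would fix a singular Hermitian metric $h$ on $\mathcal O_{X}(L)$ with semipositive curvature current $\sqrt{-1}\Theta_{h}(L)$. Since $L$ is nef it is a limit of ample classes, hence lies in the closure of the movable cone, so its divisorial Zariski decomposition has trivial negative part (see \cite{bdpp}); choosing $h$ with minimal singularities therefore makes the divisorial part of the Siu decomposition vanish, and $\sqrt{-1}\Theta_{h}(L)=R$ is a positive current in the class $L$ with $[D]=0$. This replaces the appeal to Lemma \ref{lem.pseff}, which is unavailable here because $X$ need not be projective.

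Next I would run the Bedford--Taylor and approximation argument verbatim as in Theorem \ref{thm-num=2}: Demailly's regularization \cite{dem}*{Main Theorem 1.1} yields currents $R_{c,\e}$ in the class $L$, smooth off $E_{c}(R)$, whose Bedford--Taylor squares $R_{c,\e}^{2}$ represent $L^{2}$; taking weak limits as $\e,c\to 0$ and applying the Siu decomposition produces positive $(2,2)$-currents $T$ and $S$ with $\{T\}+\{S\}=L^{2}$ and $S\geq\sum_{i}\nu(R,S_i)^{2}[S_i]$, the sum over codimension-two subvarieties $S_i$. Wedging $\{T\}+\{S\}=L^{2}$ with the nef class $L$ and the K\"ahler class $\{\omega\}$ gives
\[
0=L^{3}\cdot\{\omega\}=\{T\}\cdot L\cdot\{\omega\}+\{S\}\cdot L\cdot\{\omega\},
\]
a sum of two nonnegative terms, whence both vanish; since $S\geq\sum_{i}\nu(R,S_i)^{2}[S_i]$ this forces $\{S_i\}\cdot L\cdot\{\omega\}=0$, and hence $L|_{S_i}\equiv 0$ (as $L|_{S_i}$ is nef and $\omega|_{S_i}$ is K\"ahler) for every $i$ with $\nu(R,S_i)>0$.

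The genuinely new ingredient is to extract the Lagrangian condition by pairing against $\sigma\wedge\bar{\sigma}$ in place of $L\cdot\{\omega\}$. The Fujiki relation applied to $L,L,\sigma,\bar{\sigma}$, together with $q_{X}(L,L)=0$ and the Hodge orthogonality $q_{X}(L,\sigma)=0$ (as $L\in H^{1,1}$ and $\sigma\in H^{2,0}$), yields $\int_{X}L^{2}\wedge\sigma\wedge\bar{\sigma}=0$. As $\sigma\wedge\bar{\sigma}$ is, up to a positive constant, a semipositive $(2,2)$-form, pairing it with $\{T\}+\{S\}=L^{2}$ gives
\[
0=\int_{X}L^{2}\wedge\sigma\wedge\bar{\sigma}=\int_{X}T\wedge\sigma\wedge\bar{\sigma}+\int_{X}S\wedge\sigma\wedge\bar{\sigma}\geq\sum_{i}\nu(R,S_i)^{2}\int_{S_i}\sigma\wedge\bar{\sigma}\geq 0,
\]
so $\int_{S_i}\sigma\wedge\bar{\sigma}=0$ whenever $\nu(R,S_i)>0$. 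Passing to a resolution $\widetilde{S_i}\to S_i$, this says the holomorphic two-form $\sigma|_{\widetilde{S_i}}$ has vanishing $L^{2}$-norm, hence $\sigma|_{S_i}\equiv 0$; that is, each such (half-dimensional) $S_i$ is a Lagrangian subvariety on which $L$ restricts to zero, realizing case $(2)$.

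It remains to treat the case where $\nu(R,S_i)=0$ for all codimension-two $S_i$. Then $E_{c}(\sqrt{-1}\Theta_{h}(L))$ is a countable union of Zariski closed subsets of codimension $\geq 3$, so by Skoda's lemma the subschemes $V_{m}$ cut out by $\mathcal I(h^{\otimes m})$ have dimension $\leq 1$, and the argument of Lemma \ref{lem.partial.amp} should apply to give $\kappa(L)\geq 0$, contradicting $\kappa(L)=-\infty$; hence some codimension-two Lelong number is positive and $(2)$ holds. I expect the main obstacle to lie precisely in this last step. The two analytic inputs, Demailly's regularization \cite{dem}*{Main Theorem 1.1} and the hard Lefschetz surjection \cite{dps}*{Theorem 0.1}, are already valid on any compact K\"ahler manifold, and the Hirzebruch--Riemann--Roch positivity survives because $\chi(X,mL)=\chi(\mathcal O_{X})=3>0$ (using $L^{4}=0$ and $c_{2}(X)\cdot L^{2}=0$, the latter since $\int_{X}c_{2}(X)\alpha^{2}$ is a multiple of $q_{X}(\alpha)$). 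The delicate point is the cohomological vanishing $H^{0}(X,\Omega^{2}_{X}(mL))=0$ for $m\gg 1$: on a hyperk\"ahler fourfold $\Omega^{2}_{X}=\mathcal O_{X}\sigma\oplus\Omega^{2}_{X,\mathrm{prim}}$, and while the $\mathcal O_{X}\sigma$-summand contributes nothing when $\kappa(L)=-\infty$, the primitive part must be controlled, so \cite{lp}*{Theorem 8.1} has to be established in the (possibly non-projective) K\"ahler category rather than merely quoted from the projective K-trivial setting. A secondary technical nuisance is making rigorous sense of the singular surfaces $S_i$ and of $\sigma|_{S_i}$ through resolutions.
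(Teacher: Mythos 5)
Your overall architecture is right --- reduce to a positive current with no divisorial part, run the regularization/Bedford--Taylor argument of Theorem \ref{thm-num=2} to find a codimension-two $S$ with $L|_{S}\equiv 0$, and extract the Lagrangian condition by pairing $L^{2}$ against $\sigma\wedge\bar{\sigma}$ via the Fujiki relation; those last two steps coincide with the paper's proof. The genuine gap is in the very first reduction: the claim that a minimal-singularity metric $h$ on a nef $L$ has vanishing divisorial part $[D]$ in its Siu decomposition does not follow from the triviality of the negative part of the divisorial Zariski decomposition. Boucksom's minimal multiplicities $\nu(\{L\},D)$ are computed over currents $T\ge -\e\omega$, not over genuinely positive currents, and the two notions differ: there is a nef line bundle on a ruled surface over an elliptic curve whose \emph{only} closed positive current is the integration current along a curve (see \cite{dps}), so the minimal-singularity positive metric there has nonzero divisorial Lelong number. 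On a projective K-trivial fourfold the paper kills $[D]$ using strict nefness via Lemma \ref{lem.pseff}, which, as you correctly note, is unavailable here; but your proposed substitute is not valid.

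The paper closes this gap using the hyperk\"ahler structure rather than general pluripotential theory. Writing $\sqrt{-1}\Theta_{h}(L)=R+[D]$ with $h$ of minimal singularities (the divisorial part is a genuine $\mathbb{R}$-divisor by Lemma \ref{lem-min}, a finiteness point your proposal also skips), one observes that $\{R\}$ lies in the birational K\"ahler (modified nef) cone, and since the pseudoeffective cone is the $q_X$-dual of the positive cone one may assume $q_{X}(R,R)\le 0$, hence $q_{X}(R,R)=0$; combining this with $q_{X}(L,D)\ge 0$ and expanding $q_{X}(R,R)=q_{X}(L,L)-2q_{X}(L,D)+q_{X}(D,D)$ forces $q_{X}(D,D)=q_{X}(L,D)=0$, so the Hodge index theorem for $q_X$ gives either $D=0$ or $D$ proportional to $L$ --- and in the latter case $L$ is numerically equivalent to an effective divisor, so alternative $(1)$ holds. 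You should replace your Zariski-decomposition step with an argument of this kind. Your closing worry about $H^{0}(X,\Omega^{2}_{X}(mL))=0$ and Lemma \ref{lem.partial.amp} in the non-projective category is a fair one, but it is shared by the paper, which applies that lemma without further comment; it is not the decisive defect of your write-up.
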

\begin{rem}
A general fiber of a holomorphic Lagrangian fibration $X \to Y$ (if it exists)
is a complex torus satisfying condition (2);
conversely, if a subvariety $S$ in condition (2) can be shown to be a torus, 
then we can find a holomorphic Lagrangian fibration $X \to Y$ 
from a solution of Beauville's question (see \cites{cop, glr, hw}), 
which  proves Conjecture \ref{SYZ}. 
\end{rem}

\begin{lem}\label{lem-min}
Let $\alpha \in H^{1, 1}(X, \mathbb R)$ 
be a $(1,1)$-class on a compact K\"ahler manifold $X$ and 
$T$ be a positive $(1,1)$-current with minimal singularities in $\alpha $. 
We write the divisorial part $D$ of the Siu decomposition $T = R + [D]$ of $T$ 
as $D=\sum_{i \in I} \nu_{i} D_{i}$ with distinct prime divisors $D_i$. 
Then, the numerical classes  $\{D_{i}\}_{i \in I} $  are linearly independent in $H^{1, 1}(X, \mathbb R)$. 
In particular, the divisorial part $D$ is a $\mathbb R$-divisor $($i.e., $I$ is a finite set$)$. 
\end{lem}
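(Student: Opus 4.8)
The plan is to argue by contradiction from a hypothetical nontrivial linear relation among the classes $\{D_i\}$, turning it into a competitor positive current in $\alpha$ whose generic Lelong number along some $D_i$ is \emph{strictly} smaller than that of $T$; this is impossible because $T$, having minimal singularities, already achieves the minimal Lelong numbers.

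First I would record the two features that drive the argument. Since $T=R+[D]$ is the Siu decomposition, the residual part $R$ carries no divisorial mass, so $\nu(R,D_i)=0$ for every prime divisor and hence $\nu(T,D_i)=\nu_i$. Since $T$ has minimal singularities in $\alpha$, its local potential dominates, up to an additive constant, the potential of any positive current $T'$ representing $\alpha$; by the monotonicity of Lelong numbers under this domination one gets $\nu(T,x)\le \nu(T',x)$ for all $x\in X$ and all such $T'$. In particular $\nu_i=\nu(T,D_i)$ is the minimum of $\nu(T',D_i)$ over all positive $T'\in\alpha$.

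Next, suppose $\sum_{i\in I} c_i\{D_i\}=0$ with not all $c_i$ zero. Partition $I$ by the sign of $c_i$ and set $A:=\sum_{c_i>0}c_iD_i$ and $B:=\sum_{c_i<0}(-c_i)D_i$; these are effective $\mathbb R$-divisors supported on (distinct) components of $D$ with no common prime component, and the relation reads $\{A\}=\{B\}$. I then form the perturbation $T_\varepsilon:=T-\varepsilon[A]+\varepsilon[B]$. Because $\{A\}=\{B\}$, the class of $T_\varepsilon$ is again $\alpha$, and since $R\ge 0$ with $\nu(R,D_i)=0$ one computes
\[
T_\varepsilon=R+\sum_{i\in I}(\nu_i-\varepsilon c_i)[D_i],
\]
which is a positive current for all sufficiently small $\varepsilon>0$: the only coefficients that decrease are those with $c_i>0$, and they remain nonnegative as soon as $\varepsilon c_i\le \nu_i$.

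Finally, I would read off the Lelong numbers of $T_\varepsilon$: for any index with $c_i>0$ one has $\nu(T_\varepsilon,D_i)=\nu_i-\varepsilon c_i<\nu_i=\nu(T,D_i)$, contradicting the minimality of the Lelong numbers of $T$. Hence no index has $c_i>0$; running the same computation with $A$ and $B$ interchanged (perturbing by $T+\varepsilon[A]-\varepsilon[B]$) rules out $c_i<0$ as well, so every $c_i$ vanishes and $\{D_i\}_{i\in I}$ are linearly independent. As $H^{1,1}(X,\mathbb R)$ is finite-dimensional, $I$ must be finite and $D$ is a genuine $\mathbb R$-divisor. The one genuinely delicate point — and the real crux — is the bookkeeping that simultaneously keeps $T_\varepsilon\ge 0$ and strictly lowers a positive Lelong number; this is precisely where the disjointness of the supports of $A$ and $B$ and the vanishing $\nu(R,D_i)=0$ are used.
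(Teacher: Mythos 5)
Your proof is correct and rests on the same mechanism as the paper's: a nontrivial relation $\{A\}=\{B\}$ between disjointly supported effective combinations of the $D_i$ lets one trade divisorial mass to produce a positive current in $\alpha$ with a strictly smaller Lelong number along some $D_i$, contradicting the minimality of the singularities of $T$. The paper packages this slightly differently---it first shows that each finite effective subcombination $[\sum_{j\in J} a_j D_j]$ has minimal singularities in its own class, using the competitor $mT-[\sum_{j\in J} a_jD_j]+S$, and then compares $[\sum_{j\in J} a_j D_j]$ with $[\sum_{k\in K} b_k D_k]$---but this is a reorganization of the same idea rather than a genuinely different route.
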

\begin{proof}
Note that $D$ may be an infinite sum of the prime divisors $D_{i}$. 
Let $J \subset I$ be a finite subset and $\{a_{j}\}_{j \in J}$ be
positive integers. 
Then, the integration current $[\sum_{j \in J} a_{j}D_{j}]$ is a $(1,1)$-current 
with minimal singularities in its numerical class. 
Indeed, if there exists a positive $(1,1)$-current $S$ in its numerical class whose singularities are less singular than those of $[\sum_{j \in J} a_{j}D_{j}]$, 
then the singularities of $mT - [\sum_{j \in J} a_{j}D_{j}] + S$ 
(which is a positive $(1,1)$-current for a sufficiently large $m \gg 1$)
are less singular than those of $mT$, 
which contradicts the minimal singularities of $mT$ in $m\alpha$.

If the numerical classes  $ \{D_{i}\}_{i \in I}$  are linearly dependent, 
then we can find finite subsets $J, K\subset I$ and nonzero positive integers $\{a_{j}\}_{j \in J}$ and  $\{b_{k}\}_{k \in K}$ 
such that  $J \cap K =\emptyset$ and  
$\sum_{j \in J} a_{j} \{D_{j}\} = \sum_{k \in K} b_{k} \{D_{k}\} $ in $H^{1, 1}(X, \mathbb R)$. 
This is a contradiction to  the minimal singularities of $[\sum_{j \in J} a_{j} D_{j}]$. 
\end{proof}

\begin{proof}[Proof of Theorem \ref{main2}]
We may assume that $L$ is not numerically trivial. 
Let $h$ be a singular Hermitian metric on $L$ with minimal singularities. 
Let us  consider the Siu decomposition of $\sqrt{-1}\Theta_{h}(L)$: 
\[
\sqrt{-1}\Theta_{h}(L) = R + [D]. 
\]
The divisorial part $D$ is a $\mathbb R$-divisor by Lemma \ref{lem-min}.  
We now show that it is sufficient for the proof to consider the case $D=0$
by using the basic results in \cite{huy, huy-2}. 
With respect to the Bogomolov--Beauville--Fujiki form, 
the pseudoeffective cone is the dual cone of the positive cone 
(i.e., the connected component of $\{\alpha \, |\, q_{X}(\alpha,\alpha)>0\}$ containing the K\"ahler cone).
Hence, if $q_{X}(R,R) > 0$, then $R$ (and thus $L$) is big. 
We may assume that $q_{X}(R,R) \leq 0$. 
In the same way, we may assume that $q_{X}(D,D) \leq 0$. 
On the other hand, by construction, 
the numerical class $\{R\}$ lies in the birational K\"ahler cone 
(i.e., the modified nef cone), 
and thus we have $q_{X}(R,R) = 0$. 
Furthermore, we have $q_{X}(L,D) \geq 0$ since $L$ is nef. 
Therefore, we obtain that 
\[
0 = q_X(R,R)=q_X(L,L)-2q_{X}(L,D)+q_{X}(D,D)\leq q_{X}(D,D) \leq 0. 
\]
This yields that $q_X(L,L) = q_X(L,D) =q_X(D,D)=0$. 
Then, it follows that 
$D$ is numerically trivial or $D$ is proportional to $L$ 
from the Hodge index theorem with respect to the Bogomolov--Beauville--Fujiki form. 
In the former case, we obtain $D=0$. 
In the latter case, 
$L$ is numerically equivalent to $D$; 
hence, we have that $\kappa(L) \geq 0$. 

We now consider the case $D=0$. 
If there is no subvariety $S$ on $X$ of codimension two such that  
the Lelong number $\nu(\sqrt{-1}\Theta_{h}(L), S)$ along $S$ is positive, 
then we have $\kappa (L) \geq 0$ by Lemma \ref{lem.partial.amp}. 
Otherwise, for such a subvariety $S$, 
by the same arguments as in the proof of Theorem \ref{thm-num=2}, 
we can conclude that $\{ S \} \cdot L=0$. 
Furthermore, from inequality \eqref{lelong2}, 
we find that 
\[
0 =
\int_X c_{1}(L)^2 \wedge \sigma \wedge \overline{\sigma}
\geq\nu(L,S)^{2}\int_X [S]\wedge \sigma \wedge \overline{\sigma}
\geq 0,
\]
where $\sigma$ is the non-degenerate holomorphic 2-form on $X$.
Here, the equality on the left-hand side follows from $q_{X}(L, L)=0$. 
This indicates that $\sigma|_{S_{\reg}} =0$. 
\end{proof}

Based on the arguments in \cite{cop}, 
we can reduce Conjecture \ref{SYZ} to the case of hyperk\"ahker manifolds of algebraic dimension zero.
\begin{thm}\label{main1}
Conjecture \ref{SYZ} holds 
if Conjecture \ref{SYZ} holds for hyperk\"ahker manifolds of algebraic dimension zero. 
\end{thm}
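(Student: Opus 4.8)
The plan is to induct on the algebraic dimension $a(X)$ and push every case down to the base case $a(X)=0$ that we are allowed to assume. First I would dispose of the degenerate situation: if $\{L\}$ is numerically trivial, then $h^{1}(X,\mathcal O_X)=0$ forces $L$ to be torsion, so $\kappa(L)=0$. Hence I may assume $\{L\}\neq 0$, so that, being nef with $q_{X}(L,L)=0$, the class $\{L\}$ is a nonzero point on the boundary of the positive cone $\mathcal C_X\subset H^{1,1}(X,\mathbb R)$. I then split according to $a(X)$. If $a(X)=0$ the statement is exactly the hypothesis, and if $a(X)=\dim X$ then $X$ is Moishezon and hence projective, where the non-vanishing of nef isotropic classes is handled by the projective theory of the positive cone (as in \cite{huy, huy-2}). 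So the essential range is $0<a(X)<\dim X$, i.e. precisely the non-algebraic situation analyzed in \cite{cop}.

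In that range I would invoke the structure of the algebraic reduction from \cite{cop}: after resolving the indeterminacy by a bimeromorphic modification that is an isomorphism in codimension one (so that $q_{X}$ and $\kappa$ are unaffected), the reduction $\phi\colon X\dashrightarrow B$ becomes an almost holomorphic fibration onto a projective base $B$ which is forced to be Lagrangian, so that $a(X)=\tfrac{1}{2}\dim X$ and a general fibre $F$ is again a manifold of algebraic dimension zero. Fixing an ample class $A$ on $B$, I set $\beta:=\phi^{*}A$; this class is nef, is effective with $\kappa(\beta)=a(X)>0$, and is isotropic, since $\beta^{\dim B+1}=0$ gives $\beta^{2\dim X}=0$, whence the Fujiki relation $\int_X\alpha^{2\dim X}=c_X\,q_{X}(\alpha,\alpha)^{\dim X}$ yields $q_{X}(\beta,\beta)=0$. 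Now $\{L\}$ and $\{\beta\}$ are two nonzero null classes for the Lorentzian form $q_{X}$ on $H^{1,1}(X,\mathbb R)$, and two null vectors in signature $(1,b_{2}-3)$ are orthogonal if and only if they are proportional; so the whole argument reduces to showing $q_{X}(L,\beta)=0$.

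It therefore remains to exclude $q_{X}(L,\beta)>0$, and this is the step I expect to be the main obstacle. The idea is that with $a(X)=\tfrac12\dim X$ the polarized Fujiki relation gives $\int_X L^{a(X)}\cdot\beta^{a(X)}=c'\,q_{X}(L,\beta)^{a(X)}$ with $c'>0$, while $\beta^{a(X)}$ is proportional to the class of a general fibre $F$; hence $q_{X}(L,\beta)>0$ would mean $(L|_F)^{\dim F}>0$, so that the nef class $L|_F$ is big on $F$. But then $F$ would be Moishezon with $a(F)=\dim F>0$, contradicting $a(F)=0$. Consequently $q_{X}(L,\beta)=0$, so $L$ is numerically proportional to the effective class $\beta$, and since $h^{1}(X,\mathcal O_X)=0$ this proportionality upgrades to a $\mathbb Q$-linear equivalence, giving $\kappa(L)=\kappa(\beta)\geq 0$. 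The delicate points to make rigorous are the structural input of \cite{cop} (almost holomorphicity of $\phi$, the forced value $a(X)=\tfrac12\dim X$, and $a(F)=0$), the compatibility of the codimension-one modification with the computation of $q_{X}$ and with the pullback $\phi^{*}A$, and the separate projective case $a(X)=\dim X$; these are where the real work lies.
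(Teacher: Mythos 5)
Your treatment of the intermediate case $0<a(X)<\dim X$ is broadly in the spirit of the paper's first step (the paper argues more directly: on a non-projective hyperk\"ahler manifold the Beauville--Bogomolov--Fujiki form is negative semi-definite on the N\'eron--Severi group, so the isotropic direction is unique and $L$ must be $\mathbb Q$-proportional to the effective class coming from the algebraic reduction), although two of your technical claims are shaky: eliminating the indeterminacy of the algebraic reduction cannot in general be done by a modification that is an isomorphism in codimension one (the paper instead takes the reflexive hull of $\pi_*\bar\phi^*A$ on $X$ itself), and the structural package you import from \cite{cop} (almost holomorphicity, the forced equality $a(X)=\tfrac12\dim X$, and $a(F)=0$ for the general fibre) is not available in the generality you need.

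The fatal gap, however, is your projective case. You assert that when $a(X)=\dim X$ the manifold is projective and that ``the non-vanishing of nef isotropic classes is handled by the projective theory of the positive cone (as in \cite{huy, huy-2}).'' This is precisely the hyperk\"ahler SYZ conjecture for projective $X$, which is open; nothing in \cite{huy} or \cite{huy-2} produces a section of $mL$ from a nef isotropic class, and the projective case is exactly the part of Conjecture \ref{SYZ} that the theorem is supposed to reduce to the $a(X)=0$ hypothesis. The paper's actual argument here is a deformation one: since $h^{1,1}(X)\geq 2$, one chooses a local curve $\Delta\subset\Def{X,L}$ through $[X]$ not contained in $\Def{X,A}$ for any ample $A$, so that some fibre $\bX_s$ is non-projective; by weak semi-continuity of the algebraic dimension one finds a very general $t\in\Delta$ with $\bX_t$ non-projective and $h^0(X,mL)\geq h^0(\bX_t,m\bL_t)$ for all $m$, and then either the non-projective intermediate case or the $a=0$ hypothesis applies to $(\bX_t,\bL_t)$. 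Your proposal contains no substitute for this step, so as written it does not prove the statement.
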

\begin{proof}
Let us first consider  the case $0 < a(X) < 2n:=\dim X$. 
Let $\phi: X \dashrightarrow Y$ be an algebraic reduction map 
and $\pi: \bar X \to X$ be an elimination of its indeterminacy 
with a morphism $\bar \phi: \bar X \to Y$. 
Then,  the divisor $M$ defined by the reflexive hull of $\pi_{*} \bar \phi^{*} A$ 
satisfies $\kappa(M)=a(X)$ and $q_{X}(M, M)=0$,
 where $A$ is an ample divisor on $Y$. 
As $X$ is non-projective, 
the divisor $M$ with $q_{X}(M, M)=0$ is uniquely determined.
It follows that $L$ is $\mathbb Q$-linearly equivalent to $M$, 
which completes the proof.

We now consider the case in which $X$ is projective. 
In the case $h^{1, 1}(X, \mathbb R) = 1$, the divisor $L$ is ample. 
Therefore, we may assume that $h^{1, 1}(X, \mathbb R) \geq 2$. 
From $H^{0}(X, T_X) \cong H^{0}(X, \Omega_X) = 0$ and $K_X \sim 0$, 
the universal deformation space $\Def{X}$ (resp. $\Def{X, L}$) of $X$ (resp. $(X,L)$) is a complex manifold.
Furthermore,  the tangent space of $\Def{X}$ at $X$ is naturally isomorphic to $H^{1}(X,T_X)$, 
and the tangent space of  $\Def{X, L}$  at $ (X,L)$ is naturally isomorphic to
the kernel of the contraction map
\[
c_{1}(L)\, :\, H^{1}(X, T_X) \to H^{2}(X, \mathcal{O}_X) \cong \mathbb{C} \bar{\sigma}. 
\]
Note that 
$\Def{X, L}$ transversally intersects with $\Def{X, A}$ for any ample divisor $A$,
because the numerical classes of $A$ and $L$ are linearly independent (see, for example, \cite{huy}*{1.12, 1.14}). 
This indicates that $\Def{X, L}$ is not contained in $\Def{X, A}$ for any ample divisor $A$ on $X$. 
Hence, from $h^{1,1}(X) \geq 2$,
we can take a local smooth curve $\Delta$ in $\Def{X, L}$ containing $[X]$ such that 
$\Delta$ is not contained in  $\Def{X, A}$ for any ample divisor $A$ on $X$. 
By construction, there exists some $s \in \Delta$ such that $\bX_{s}$ is non-projective; 
hence, the subset $\{t \in \Delta \,| a(\bX_t) > a(\bX_s) \}$ is a countable union of proper analytic subsets 
by the weak semi-continuity of algebraic dimension (for example, see  \cite{fp}*{Proposition 3.2}). 
We can take a point $t \in \Delta$ such that $\bX_t$ is non-projective and 
\[
\text{
$h^{0}(X,mL) \geq h^{0}(\bX_t, m\bL_t) $ for any $m \in \mathbb{Z}$
}
\]
holds. 
In the case $a(\bX_t) > 0$, we obtain $\kappa(L) \geq \kappa(\bL_t)=a(\bX_t)$ 
from the first half argument. 
In the case $a(\bX_t) = 0$, we have $\kappa(L) \geq \kappa(\bL_t) \geq 0 $ 
from the assumption. 
\end{proof}


\begin{thebibliography}{n} 
\bibitem{bpv}
W.~Barth, K.~Hulek, C.~A.~M.~Peters, A.~Van de Ven, 
\emph{Compact complex surfaces}, 
Second edition. Results in Mathematics and Related Areas. 3rd Series. 
A Series of Modern Surveys in Mathematics, 4. Springer-Verlag, Berlin, 2004.

\bibitem{beauville}
A.~Beauville, 
\emph{Vari\'{e}t\'{e}s K\"{a}hleriennes dont la premi\`ere classe de Chern est nulle}, 
J. Differential Geom. \textbf{18} (1983), no. 4, 755--782. 

\bibitem{bchm}
C.~Birkar, P.~Cascini, C.~D.~Hacon, J.~M\textsuperscript{c}Kernan, 
\emph{Existence of minimal models for varieties of log general type}, 
J. Amer. Math. Soc. \textbf{23} (2010), no. 2, 405--468. 

\bibitem{bl}
Ch.~Birkenhake, H.~Lange, 
\emph{Complex abelian varieties}, 
Grundlehren der Mathematischen Wissenschaften, vol. 302, Springer-Verlag, Berlin, 2004.

\bibitem{bdpp}
S.~Boucksom, J.-P.~Demailly, M.~P\v aun, Th.~Peternell, 
\emph{The pseudo-effective cone of a compact K\"{a}hler manifold and varieties of negative Kodaira dimension}, 
J. Algebraic Geom. \textbf{22} (2013), no. 2, 201--248.

\bibitem{ccp} 
F.~Campana, J.~A.~Chen, Th.~Peternell, 
\emph{Strictly nef divisors}, 
Math. Ann.  \textbf{342} (2008), no. 3, 565--585.

\bibitem{cop}
F.~Campana, K.~Oguiso, Th.~Peternell, 
\emph{Non-algebraic hyperk\"ahler manifolds}, 
J. Differential Geom. \textbf{85} (2010), no. 3, 397--424.

\bibitem{dem}
J.-P.~Demailly, 
\emph{Regularization of closed positive currents and intersection theory}, 
J. Algebraic Geom. \textbf{1} (1992), no. 3, 361--409.
 
\bibitem{demailly-book}
J.-P.~Demailly, 
\emph{Analytic Methods in Algebraic Geometry}, 
Surveys of Modern Mathematics \textbf{1}, International Press, Somerville, MA;  Higher Education Press, Beijing, 2012.

\bibitem{dps}
J.-P.~Demailly, Th.~Peternell, M.~Schneider, 
\emph{Pseudo-effective line bundles on compact k\" ahler manifolds}, 
Int. J. Math. \textbf{12} (2001), no. 6, 689--741.

\bibitem{dfm}
S.~Diverio, C.~Fontanari, and D.~Martinelli, 
\emph{Rational curves on fibered Calabi--Yau manifolds}, 
Doc. Math. \textbf{24} (2019), 663--675.

\bibitem{fp}
A.~Fujiki, M.~Pontecorvo, 
\emph{Non-upper-semicontinuity of algebraic dimension for families of compact complex manifolds}, 
Math. Ann. \textbf{348} (2010), no. 3, 593--599.

\bibitem{fujino-4fold}
O.~Fujino, 
\emph{Finite generation of the log canonical ring in dimension four}, 
Kyoto J. Math. \textbf{50} (2010), no. 4, 671--684.

\bibitem{fujino}
O.~Fujino, 
\emph{Non-vanishing theorem for log canonical pairs}, 
J. Algebraic Geom. \textbf{20} (2011), no. 4, 771--783.

\bibitem{fujino-ok}
O.~Fujino, 
\emph{On Kawamata's theorem}, 
\emph{Classification of algebraic varieties},
EMS Ser. Congr. Rep., Eur. Math. Soc., Z\"urich, 2011, 305--315.

\bibitem{fujino-foundations} 
O.~Fujino, 
\emph{Foundations of the minimal model program}, 
MSJ Memoirs, \textbf{35}. Mathematical Society of Japan, 
Tokyo, 2017.

\bibitem{fukuda} 
S.~Fukuda, 
\emph{On numerically effective log canonical divisors}, 
Int. J. Math. Math. Sci.  \textbf{30} (2002), no. 9, 521--531.

\bibitem{glr} 
D.~Greb, C.~Lehn, S.~Rollenske
\emph{Lagrangian fibrations on hyperk\"ahler manifolds--on a question of Beauville}, 
.Ann. Sci. \'Ec. Norm. Sup\'er. (4) {\bf{46}} (2013), no. 3, 375--403.

\bibitem{hw} 
J.-M.~Hwang, R.M.~Weiss,
\emph{Webs of Lagrangian tori in projective symplectic manifolds},
Invent. Math. {\bf{192}} (2013), no. 1, 83--109.
 
 
\bibitem{han-liu} 
J.~Han, W.~Liu,
\emph{On numerical nonvanishing for generalized log canonical pairs},
Doc. Math. \textbf{25} (2020), 93--123.

\bibitem{hart-1} 
R.~Hartshorne,  
\emph{Ample subvarieties of algebraic varieties}, 
Lect. Notes Math. \textbf{156}, Springer-Verlag, Heidelberg 1970.

\bibitem{huy} 
D.~Huybrechts, 
\emph{Compact hyper-K\" ahler manifolds: basic results}, 
Invent. Math. \textbf{135} (1999), no. 1, 63--113.

\bibitem{huy-2} 
D.~Huybrechts, 
\emph{The K\"ahler cone of a compact hyperk\"ahler manifold}, 
Math. Ann. \textbf{326} (2003), no. 3, 499--513. 


\bibitem{jiang} 
X.~Jiang, 
\emph{On the pluricanonical maps of varieties of intermediate Kodaira dimension}, 
Math. Ann. \textbf{356} (2013), no. 3, 979--1004.

\bibitem{kawamata}
Y.~Kawamata, 
\emph{Pluricanonical systems on minimal algebraic varieties}, 
Invent. Math., \textbf{79} (1985), no. 3, 567--588.

\bibitem{kollar}
J.~Koll\'{a}r, 
\emph{Deformations of elliptic Calabi--Yau manifolds}, 
Recent advances in algebraic geometry, London Math. Soc. Lecture Note Ser., vol. 417,
Cambridge Univ. Press, Cambridge, 2015, 254--290.

\bibitem{kollar-mori}
J.~Koll\'{a}r, S.~Mori, 
\emph{Birational geometry of algebraic varieties},
Cambridge tracts in mathematics, vol. 134, Cambridge University
Press, 1998.

\bibitem{laz} 
R.~Lazarsfeld, 
\emph{Positivity in algebraic geometry I, II}, 
Ergebnisse der Mathematik und ihrer Grenzgebiete, vol. 48, 49, Springer-Verlag, Berlin, 2004.

\bibitem{lop} 
V.~Lazi\'c, K.~Oguiso, Th.~Peternell, 
\emph{Nef line bundles on Calabi--Yau threefolds I}, 
Int. Math. Res. Not. IMRN 2020, no. 19, 6070--6119.

\bibitem{lop1} 
V.~Lazi\'c, K.~Oguiso, Th.~Peternell, 
\emph{The Morrison-Kawamata cone conjecture and abundance on Ricci flat manifolds},
Uniformization, Riemann--Hilbert correspondence, Calabi--Yau manifolds $\&$ Picard--Fuchs equations, 
Adv. Lect. Math. (ALM) 42, Int. Press, Somerville, MA, 2018, 157--185.

\bibitem{lp} 
V.~Lazi\'{c}, Th.~Peternell, 
\emph{Abundance for varieties with many differential forms},
\'{E}pijournal G\'{e}om. Alg\'{e}brique,
\textbf{2} (2018), Art. 1, 35.

\bibitem{lp20} 
V.~Lazi\'{c}, Th.~Peternell, 
\emph{On generalised abundance, I},
Publ. Res. Inst. Math. Sci.,
\textbf{56} (2020), no. 2, 353--389.

\bibitem{liu-svaldi} 
H.~Liu, R.~Svaldi, 
\emph{Rational curves and strictly nef divisors on Calabi--Yau threefolds},
arXiv:2010.12233.

\bibitem{miyaoka} 
Y.~Miyaoka, 
\emph{The Chern Classes and Kodaira Dimension of a Minimal Variety}, 
Adv. Stud. Pure Math. \textbf{10} (1987), 449--476.

\bibitem{okawa}
S.~Okawa, 
{\em{An example of a strictly nef divisor with Kodaira dimension $-\infty$ on a smooth rational surface}},
preprint.

\bibitem{oguiso}
K.~Oguiso, 
\emph{On algebraic fiber space structures on a Calabi--Yau 3-fold}, 
Internat. J. Math. \textbf{4} (1993), no. 3, 439--465, with an appendix by Noboru Nakayama.


\bibitem{serrano} 
F.~Serrano,
\emph{Strictly nef divisors and Fano threefolds}, 
J. Reine Angew. Math. \textbf{464} (1995), 187--206.

\bibitem{verbitsky}
M.~Verbitsky, 
\emph{HyperK\" ahler SYZ conjecture and semipositive line bundles}, 
Geom. Funct. Anal. \textbf{19} (2010), no. 5, 1481--1493.

\bibitem{wilson} 
P.~M.~H.~Wilson, 
\emph{The K\" ahler cone on Calabi--Yau threefolds}, 
Invent. Math. \textbf{107} (1992), no. 3, 561--583.

\bibitem{wilson1} 
P.~M.~H.~Wilson, 
\emph{The existence of elliptic fibre space structures on Calabi--Yau threefolds}, 
Math. Ann. \textbf{300} (1994), no. 4, 693--703.

\end{thebibliography}
\end{document}